\newtheorem{theorem}{Theorem}[section]
\newtheorem{lemma}[theorem]{Lemma}
\newtheorem{proposition}[theorem]{Proposition}
\newtheorem{corollary}[theorem]{Corollary}
\newcommand{\f}{\frac}
\newcommand{\beq}{\begin{equation}}
\newcommand{\eeq}{\end{equation}}
\newcommand{\beqq}{\begin{equation*}}
\newcommand{\eeqq}{\end{equation*}}
\theoremstyle{definition}
\newtheorem{definition}[theorem]{Definition}
\theoremstyle{remark}
\newtheorem{remark}[theorem]{Remark}
\numberwithin{equation}{section}
\def\ep{\epsilon}
\numberwithin{equation}{section}
\begin{document}

\address{Kailong Yang
\newline \indent Chongqing National Center for Applied Mathematics, Chongqing Normal University
\newline \indent  Chongqing, China\indent }
\email{ykailong@mail.ustc.edu.cn}

\address{Zehua Zhao
\newline \indent  Department of Mathematics, University of Maryland, U.S.A.
\newline \indent Department of Mathematics and Statistics, Beijing Institute of Technology
\newline \indent MIIT Key Laboratory of Mathematical Theory and Computation in Information Security.}
\email{zzh@bit.edu.cn}

\title[On scattering asymptotics for the 2D cubic resonant system]{On scattering asymptotics for the 2D cubic resonant system}
\author{Kailong Yang and Zehua Zhao}
\maketitle

\setcounter{tocdepth}{1}
\tableofcontents

\begin{abstract}
In this paper, we prove scattering asymptotics for the 2D (discrete dimension) cubic resonant system. This scattering result was used in Zhao \cite{Z1} as an assumption to obtain the scattering for cubic NLS on $\mathbb{R}^2\times \mathbb{T}^2$ in $H^1$ space. Moreover, the 1D analogue is proved in Yang-Zhao \cite{YZ}. Though the scheme is also tightly based on Dodson \cite{D}, the 2D case is more complicated which causes some new difficulties. One obstacle is the failure of `$l^2$-estimate' for the cubic resonances in 2D (we also discuss it in this paper, which may have its own interests). To fix this problem, we establish weaker estimates and exploit the symmetries of the resonant system to modify the proof of \cite{YZ}. At last, we make a few remarks on the research line of `long time dynamics for NLS on waveguides'.
\end{abstract}
\bigskip

\noindent \textbf{Keywords}: Resonant system, NLS, waveguide manifold, scattering, long time Strichartz estimate, interaction Morawetz estimate
\bigskip

\noindent \textbf{Mathematics Subject Classification (2020)} Primary: 35Q55; Secondary: 35R01, 37K06, 37L50.

\section{Introduction}
\subsection{Statements of the main results}
In this paper, we study the initial value problem of the infinite dimensional vector-valued  resonant nonlinear Schr\"{o}dinger system
\begin{equation}\label{maineq}
\begin{cases}
i\partial_t u_j + \Delta u_j =\sum\limits_{\mathcal{R}(j)}u_{j_1}\bar{u}_{j_2}u_{j_3},\\
u_j(0) = u_{0,j},
\end{cases}
\end{equation}
with unknown $\vec{u}=\{u_j\}_{j\in\mathbb{Z}^2}$, where $u_j: \mathbb{R}^2_x \times \mathbb{R}_t \rightarrow \mathbb{C}$ is complex valued function and
$$\mathcal{R}(j)=\{(j_1,j_2,j_3)\in (\mathbb{Z}^2)^3,\ \ j_1-j_2+j_3 = j,\ \ |j_1|^2-|j_2|^2 +|j_3|^2 = |j|^2\},$$
where $\mathcal{R}(j)$ is known to be the resonant relation. This model is also known as `cubic resonant Schr\"odinger system'. We \emph{emphasize} that in this paper, we regard it as `\textbf{2D} cubic resonant system' because the discrete dimension of the model \eqref{maineq} is $2$ (noticing $j \in \mathbb{Z}^2$ in \eqref{maineq}). As a comparison, the 1D analogue is studied in \cite{YZ}. More precisely, global well-posedness and scattering are obtained in \cite{YZ}, which are used in \cite{CHENG} as a crucial element. At last, we refer to \cite{CZZ,HP} for the quintic analogues of the resonant Schr\"odinger system.

We intend to study global well-posedness and scattering for \eqref{maineq} with large data, which is used in \cite{Z1} as an important assumption. The main theorem of this paper is as follows,
\begin{theorem}\label{mainthm}
The initial value problem \eqref{maineq} is globally well-posed and scattering holds for any $\vec{u}_0=\{u_{0,j}\}_{j\in \mathbb{Z}^2} \in L^2_xh^1 (\mathbb{R}^2\times \mathbb{Z}^2)$. Here we say the solution $\vec{u}(t,x)$ to \eqref{maineq} scattering means that there exist  $\vec{u}^{\pm}\in L_x^2h^1$  such that
\begin{equation}
  \lim_{t\rightarrow \pm\infty} \|e^{it\triangle}\vec{u}^{\pm}-\vec{u}(t)\|_{L_x^2h^1}=0.
\end{equation}
\end{theorem}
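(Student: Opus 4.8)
The plan is to follow the concentration-compactness/rigidity scheme of Kenig–Merle as adapted by Dodson to the mass-critical setting, in the vector-valued form already carried out in the 1D case in \cite{YZ}, and to identify precisely where the 2D discrete geometry forces modifications. First I would record the small-data theory: using the (vector-valued) Strichartz estimates for $e^{it\Delta}$ on $\mathbb{R}^2$ together with the multilinear estimates for the resonant nonlinearity $\sum_{\mathcal{R}(j)} u_{j_1}\bar u_{j_2}u_{j_3}$ in $L^2_x h^1$, one gets local well-posedness, a small-data scattering threshold, and a stability/perturbation theory. Conservation of mass $\|\vec u(t)\|_{L^2_x h^2}$ and of energy (the $H^1_x h^1$-type energy built from $\tfrac12\|\nabla_x \vec u\|^2 + \tfrac14\sum_j |\text{nonlinear term}|$) should be verified; note the $h^1$ weight is subcritical with respect to the scaling $u\mapsto \lambda u(\lambda^2 t,\lambda x)$, which acts only on the $\mathbb{R}^2_x$ variable, so the critical space is really $L^2_x h^1$ and the argument is ``mass-critical in $x$, energy-subcritical in $j$.''

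The core of the argument is the contradiction hypothesis: if Theorem~\ref{mainthm} fails there is a critical-mass minimal blowup solution, and a profile decomposition adapted to $L^2_x h^1$ (profiles carry an $\mathbb{R}^2$-translation, an $\mathbb{R}^2$-modulation/Galilean parameter, and a scaling parameter, but the $j$-index is frologe compact since $h^1\hookrightarrow h^2$ is not compact—one truncates high frequencies in $j$ cheaply using the $h^1$ bound) yields, via the stability theory, an almost-periodic (modulo the $x$-symmetries) minimal solution. One then runs the standard reductions to the three enemies and rules out the self-similar and low-to-high frequency cascade scenarios by the usual mass/energy arguments; the decisive step is the quasi-soliton (soliton-like) case, killed by a long-time Strichartz estimate à la Dodson combined with an interaction Morawetz inequality. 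The $x$-space is $\mathbb{R}^2$, exactly Dodson's setting, so the Morawetz weight $|x|$ is used verbatim, but every estimate must be summed in $j$ with the $h^1$ weight.

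The main obstacle—flagged already in the abstract—is that the ``$l^2$-estimate'' for cubic resonances used in the 1D proof \emph{fails in 2D}: the count of solutions $(j_1,j_2,j_3)$ to $j_1-j_2+j_3=j$, $|j_1|^2-|j_2|^2+|j_3|^2=|j|^2$ is not uniformly summable the way it is for $\mathbb{Z}$, so one cannot bound the resonant trilinear form in $\ell^2_j$ directly with a constant. The fix is to prove a weaker substitute—an $\ell^2_j \to \ell^1_j$ or logarithmically-lossy estimate, or a bound using the $h^1$ regularity to absorb the loss—and then to propagate this weaker input through the long-time Strichartz/Morawetz machinery. Here is where the \emph{symmetries of the resonant system} enter: the resonant manifold $\mathcal{R}(j)$ is invariant under the lattice translations and the orthogonal symmetries of $\mathbb{Z}^2$, and the nonlinearity conserves not just mass but the full momentum and the $j$-weighted quantities, so one can trade the missing $\ell^2$ bound for conservation-law control. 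Concretely, I expect the long-time Strichartz estimate of \cite{D} to be re-derived with the trilinear term estimated by the weaker resonant bound plus an $h^1$-interpolation, at the cost of tracking an extra $\langle j\rangle$-power that is affordable because the solution lives in $h^1$ while scattering is measured in $h^1$ as well. Assembling: weak resonant estimate $\Rightarrow$ modified long-time Strichartz $\Rightarrow$ interaction Morawetz $\Rightarrow$ the almost-periodic soliton has zero mass, contradiction; hence no minimal blowup solution exists and Theorem~\ref{mainthm} follows.
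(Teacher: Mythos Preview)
Your skeleton is correct and matches the paper's approach: Kenig--Merle/Dodson concentration compactness, profile decomposition in $L^2_x h^1$, reduction to an almost periodic minimal blowup solution, long-time Strichartz estimate, frequency-localized interaction Morawetz, rigidity. You have also correctly identified the central obstacle, the failure of the $l^2$-resonant estimate in the 2D discrete index.

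However, your description of the \emph{fix} is where the proposal has a genuine gap. You suggest an ``$\ell^2_j\to\ell^1_j$ or logarithmically-lossy estimate'' and invoke ``lattice translations and orthogonal symmetries of $\mathbb{Z}^2$'' together with ``conservation of momentum and $j$-weighted quantities'' to compensate. That is not the mechanism, and I do not see how those symmetries would close the Morawetz argument. What the paper actually uses is twofold. First, one proves concrete weaker multilinear bounds $\|\vec F(u)\|_{l^2}\lesssim \|\vec u\|_{l^2}^2\|\vec u\|_{h^\beta}$ and $\|\vec F(u)\|_{h^1}\lesssim \|\vec u\|_{h^1}\|\vec u\|_{h^\beta}\|\vec u\|_{l^2}$ for any $\beta>0$ (these are equivalent to Strichartz/bilinear Strichartz on $\mathbb{T}^2$); the latter, via a continuity argument, reduces the scattering norm from $L^4_{t,x}h^1$ to the smaller $L^4_{t,x}l^2$, which is the norm one actually proves is finite. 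You are missing this reduction step. Second---and this is the crux---since the estimates must now be run at the $h^1$ level in $j$, the Morawetz and long-time Strichartz computations acquire a weight $\langle j\rangle^2$ that spoils the usual $l^2$-symmetry. The rescue is \emph{algebraic}: on the resonant set $|j|^2-|j_1|^2+|j_2|^2-|j_3|^2=0$, so by symmetrizing over the index rotation one gets exact cancellations such as
\[
\Im\Big(\sum_{(j,j_1,j_2,j_3)\in\mathcal R}\langle j\rangle^2\, u_j\bar u_{j_1}u_{j_2}\bar u_{j_3}\Big)=0,
\]
and analogous identities with high/low frequency projectors inserted. These are what make the $h^1$-weighted interaction Morawetz error terms controllable. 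Your proposal needs to replace the vague appeal to ``conservation-law control'' by these specific identities; without them the long-time Strichartz/Morawetz step does not close.

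Two smaller points: the paper, following Dodson, reduces to two scenarios (rapid frequency cascade $\int N(t)^3<\infty$ and quasi-soliton $\int N(t)^3=\infty$), not three; and your remark that ``$h^1\hookrightarrow h^2$ is not compact'' is inverted---the relevant fact is that the $h^1$ bound gives decay at large $|j|$, making the $j$-direction effectively compact in the profile decomposition.
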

\begin{remark}
The cubic resonant system \eqref{maineq} can be regarded as the system analogue of cubic mass critical NLS. If one considers the special case for \eqref{maineq}, when $u_j=0$ for $j\neq 0$, the system is reduced to the 2D cubic Schr\"odinger equation
\begin{equation}
    (i\partial_t+\Delta_{\mathbb{R}^2})u=u|u|^2,
\end{equation}
which is mass-critical (in the sense: the mass of the solution is invariant under the scaling symmetry). The scattering for this model holds (see Dodson \cite{D}) thus heuristically the scattering result for \eqref{maineq}, i.e. Theorem \eqref{mainthm} is also expected to hold.
\end{remark}

\begin{remark}
The local well-posedness and small data scattering for \eqref{maineq} has been included in the appendix of \cite{Z1}.
\end{remark}

\begin{remark}
One may compare this result with \cite{YZ}, which is for the 1D analogue. We will discuss the differences explicitly later on.
\end{remark}
\subsection{Background and motivations}

Basically there are three motivations that drive us to study the cubic resonant system \eqref{maineq}.
\begin{enumerate}[\bf (1)]
\item
{\bf Weak turbulence problems.}
It is tightly related to \emph{Weak turbulence problems} (growth of Sobolev norms for NLS) where 2D cubic resonant system appears a lot especially when one studies cubic NLS on two dimensional torus. We note again that the ‘2D’ means the discrete dimension. (It can roughly viewed from the fact that Fourier transformation takes torus to lattice.) We refer to \cite{C1,Faou,Hani1} and the reference therein for more information and background.
\item
{\bf NLS problems on waveguide manifolds.}   This model is also tightly related to \emph{NLS problems on waveguide manifolds} (i.e. product spaces of this form: $\mathbb{R}^n\times \mathbb{T}^m$, which are also known semiperiodic spaces). In particular, the scattering result of this paper implies the scattering for defocusing cubic NLS on $\mathbb{R}^2 \times \mathbb{T}^2$, see \cite{Z1}. Such relations between NLS on waveguides and the resonant systems also appear in \cite{CHENG,CGZ, HP,YZ}.

\item
{\bf 1D cubic resonant system.}
We note that the scattering for the 1D cubic resonant system is established in \cite{YZ} so it is natural to consider the 2D analogue as in the current paper. 2D model is heuristically harder which causes differences and new difficulties. We will discuss more about it in the next subsection.\vspace{3mm}

\end{enumerate}

Since the resonant system is tightly related to \textbf{NLS on waveguides} problems, we also give an overview for the research line of `long time dynamics of NLS on waveguides' as below. (We will also make a few further remarks in the Appendix.) \vspace{3mm}

Generally, well-posedness theory and long time behavior of NLS is a hot topic in the area of dispersive evolution equations and has been studied widely in recent decades. Naturally, the Euclidean case is first treated and the theory at least in the defocusing setting has been well established. We refer to \cite{CKSTT,D3,KM1} for some important  Euclidean results. Moreover, we refer to \cite{CZZ,CGZ,HP,HTT1,HTT2,IPT3,IPRT3,KV1,TV1,TV2,YYZ,Z1,Z2,Z3} with regard to the tori case and the waveguide case. We may roughly think of the waveguide case as the ``intermediate point" between the Euclidean case and the tori case since the waveguide manifold is the product of the Euclidean space and the tori. The techniques used in Euclidean and tori settings are frequently combined and applied to the waveguides problems.\vspace{3mm}

At last, we refer to \cite{Bourgain1,C2,D3,Benbook,KM1,Taobook} and the reference therein for some classical results and theories on `Long time dynamics for NLS on Euclidean spaces'.
\subsection{Comparison with the 1D analogue}
In this section, we compare the 1D analogue (studied in \cite{YZ}) with the 2D model \eqref{maineq}. We will see that the proof for the 2D model is very similar to the 1D analogue in the following sections. However, the method for the 1D analogue can not directly adapted to the 2D case and the 2D model is essentially harder to deal with. We explain these points as below.\vspace{3mm}

1.\emph{More flexible directions.}\vspace{3mm}

We consider the resonant relation: $(j,j_1,j_2,j_3)$ satisfying $j=j_1-j_2+j_3$ and $|j|^2=|j_1|^2-|j_2|^2+|j_3|^2$. For the 1D case ($j,j_1,j_2,j_3\in \mathbb{Z}$), noticing that the variables are scalars instead of vectors, the resonant relation directly implies
\begin{equation}
    j=j_1,j_2=j_3 \quad \textmd{ or }\quad j=j_3,j_2=j_1.
\end{equation}
We can see if we fix $j$ and $j_2$ then the other two variables will be determined. Thus roughly speaking, the number of flexible (free) directions is two.

However, for the 2D case ($j,j_1,j_2,j_3\in \mathbb{Z}^2$), the situation would be quite different. The variables are vectors instead of scalars. Applying $j=j_1-j_2+j_3$ to $|j|^2=|j_1|^2-|j_2|^2+|j_3|^2$, we have
\begin{equation}
 j=j_1,j_2=j_3 \quad \textmd{ or }\quad (j-j_1) \cdot (j_1-j_2)=0.
\end{equation}
For this case, if we fix $j,j_1$ ($j \neq j_1$), $j_2$ can not be uniquely determined since there are infinite vectors which are perpendicular to a fixed nonzero vector. We can say the number of flexible directions is more than two (`between two and three') in some sense. This heuristically implies that the cubic resonances are more complicated for the 2D case. It is very reasonable that, with the increase of the dimension, the resonances become more complicated. 1D is a special case. \vspace{3mm}

2.\emph{The failure of $l^2$-estimate}
\vspace{3mm}

Another significant technical obstacle is the failure of $l^2$-estimate for the 2D case.  We denote $\vec{F}(u)$ to be the nonlinearity $\sum_{(j_1,j_2,j_3)\in R(j)} u_{j_{1}}\bar{u}_{j_{2}}u_{j_{3}}$. Here we note that the $l^2$ is the discrete $L^2$-norm of a sequence ($\|\vec{u}\|_{l^2}=(\sum_{j} |u_j|^2)^{\frac{1}{2}}$) and similarly $h^{a}$ is the discrete Sobolev norm ($||\vec{u}||_{h^a}=(\sum_{j} \langle 1+|j|^2 \rangle ^{a} |u_j|^2)^{\frac{1}{2}}$).

For the 1D case, one can prove the $l^2$-estimate for the cubic resonance in the following sense (see \cite{YZ})
\begin{equation}
\|\vec{F}(u)\|_{l^2}   \lesssim \|\vec{u}\|^3_{l^2}.
\end{equation}

However, for the 2D case, the above estimate does not work, which causes much trouble for proving large data scattering for \eqref{maineq}. We will explain it explicitly in Section 3. We note that the proof for Theorem \ref{mainthm} will be much simpler, i.e. almost follows line to line as the 1D case (\cite{YZ}) if the $l^2$-estimate holds.\vspace{3mm}

3.\emph{Weaker estimates and weak symmetries}\vspace{3mm}

Because of `the failure of $l^2$-estimate', we can not reduce the estimates to $l^2$-level as in \cite{YZ}. One main advantage of $l^2$-level type estimates is the good symmetric property. For instance, one can easily show
\begin{equation}
    \Im ( \sum_{(j,j_1,j_2,j_3) \in \mathcal{R}}u_j \bar{u}_{j_1}u_{j_2}\bar{u}_{j_3} )=0.
\end{equation}
using symmetric property by rotating the indices. If one considers $h^{\alpha}$-level estimate ($\alpha>0$), it would be harder to use the symmetry to deal with a term like this:
\begin{equation}
     \sum_{(j,j_1,j_2,j_3) \in \mathcal{R}}\langle j \rangle^{\alpha} u_j \bar{u}_{j_1}u_{j_2}\bar{u}_{j_3} .
\end{equation}
We will explain it more in Section 3 and Section 4.
\subsection{Overview of the strategy}

First of all, following the standard arguments, by using Strichartz estimates, one can get the local well-posedness, small data scattering and stability theory for the resonant system \eqref{maineq}. We summarize these results as follows. (See Appendix of \cite{Z1}. This part is also similar to Section 2 of \cite{YZ}.)
\begin{theorem}\label{mainloc} The resonant system \eqref{maineq} has the following properties:
\begin{enumerate}
  \item (Local well-posedness) Suppose that $\|\vec{u}_{0}\|_{L^2 h^1} \le E$,  then the resonant system \eqref{maineq} has a unique strong solution $\vec{u}\in C_t^0\left((-T,T);L^{2}h^{1}\right)\bigcap L_{t}^{4}L_{x}^{4}h^{1}\left((-T,T)\times \mathbb{R}^2\times \mathbb{Z}^2\right)$  for some $T > 0$, satisfying $\vec{u}(0)=\vec{u}_{0}$;
  \item (Small data scattering) There exists sufficiently small $\delta > 0$, if $\|\vec{u}_{0}\|_{L^2 h^1} \le \delta$,  then \eqref{maineq} has an unique global solution
$\vec{u}\in L_{t}^{\infty}L_{x}^{2}h^{1}\left(\mathbb{R}\times \mathbb{R}^2\times \mathbb{Z}^2\right)\bigcap L_{t}^{4}L_{x}^{4}h^{1}\left(\mathbb{R}\times \mathbb{R}^2\times \mathbb{Z}^2\right)$ with initial data $\vec{u}(0)=\vec{u}_{0}$, Moreover, there exist $\vec{u}^{\pm} \in L_{x}^{2}h^{1}(\mathbb{R}^2 \times \mathbb{Z}^2)$ such that
\begin{equation} \label{eq1.3}
\|\vec{u}(t)- e^{it\Delta} \vec{u}^{\pm}\|_{L_{x}^{2}h^{1}} \to 0, \ \text{ as } t\to \pm \infty.
\end{equation}
\item (Stability) For $a\in \{+1,0\}$, let I be a compact interval and $\vec{e}=\{e_{j}\}_{j\in \mathbb{Z}^2}$, $e_{j}=i\partial_t u_j + \Delta u_j - \sum\limits_{\mathcal{R}(j)} u_{j_1} \bar{u}_{j_2} u_{j_3}$, assume $\|\vec{u}\|_{L_{t}^{4}L_{x}^{4}h^{a}\left(I\times \mathbb{R}^2\times \mathbb{Z}^2\right)}\leq A$ for some $A>0$,  then for $\forall\epsilon>0,\exists \delta>0$,  such that if $\|\vec{e}\|_{L_{t}^{\frac{4}{3}}L_{x}^{\frac{4}{3}}h^{a}\left(I\times \mathbb{R}^2\times \mathbb{Z}^2\right)}\leq\delta,\|\vec{u}(t_{0})-\vec{v}_{0}\|_{L^{2}h^{1}}\leq\delta,$
then the resonant system \eqref{maineq} has a solution $\vec{v}\in L_{t}^{\infty}L_{x}^{2}h^{1}(I\times \mathbb{R}^2\times \mathbb{Z}^2)\cap L_{t}^{4}L_{x}^{4}h^{a}(I\times \mathbb{R}^2\times \mathbb{Z}^2)$ with initial data $\vec{v}(t_{0})=\vec{v}_0$, Moreover,
$$\|\vec{u}-\vec{v}\|_{L_{t}^{4}L_{x}^{4}h^{a}\left(I\times \mathbb{R}^2\times \mathbb{Z}^2\right)} +\|\vec{u}-\vec{v}\|_{L_{t}^{\infty}L_{x}^{2}h^{1}\left(I\times \mathbb{R}^2\times \mathbb{Z}^2\right)}\leq\epsilon.$$
\end{enumerate}
\end{theorem}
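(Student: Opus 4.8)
To prove Theorem~\ref{mainloc} the plan is to run the standard Strichartz/contraction-mapping/perturbation scheme; I outline the structure and isolate the one ingredient that is genuinely tied to the resonant nonlinearity. Fix an interval $I$ and set $X^a(I):=C_t^0L_x^2h^a\cap L_t^4L_x^4h^a$ over $I\times\mathbb R^2\times\mathbb Z^2$ for $a\in\{0,1\}$. Because $e^{it\Delta}$ acts diagonally on the discrete index $j$, the scalar $2$D Strichartz estimates for the admissible pair $(4,4)$ lift componentwise — take the $h^a$ (weighted $\ell^2_j$) norm and use Minkowski's inequality in $L^2_{t,x}$ — to give
\[
\|e^{it\Delta}\vec f\|_{X^a(I)}\lesssim\|\vec f\|_{L^2_xh^a},\qquad \Big\|\int_{t_0}^t e^{i(t-s)\Delta}\vec G(s)\,ds\Big\|_{X^a(I)}\lesssim\|\vec G\|_{L^{4/3}_tL^{4/3}_xh^a(I)}.
\]
The nonlinear input is the trilinear bound
\[
\|\vec F(\vec u)\|_{L^{4/3}_tL^{4/3}_xh^a(I)}\lesssim\|\vec u\|_{L^4_tL^4_xh^1(I)}^2\,\|\vec u\|_{L^4_tL^4_xh^a(I)},\qquad a\in\{0,1\},
\]
together with its difference version coming from $u_1v_1w_1-u_2v_2w_2=(u_1-u_2)v_1w_1+u_2(v_1-v_2)w_1+u_2v_2(w_1-w_2)$. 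In $x,t$ this is H\"older ($\tfrac34=3\cdot\tfrac14$); the real content is the pointwise-in-$(t,x)$ discrete estimate $\|(\vec F(\vec u))_j\|_{h^a_j}\lesssim\|\vec u\|_{h^1}^2\|\vec u\|_{h^a}$, where the resonance relation $\mathcal R(j)$ (equivalently $(j-j_1)\cdot(j_1-j_2)=0$) is used to control the sum over $j_1,j_2$, and the weight $\langle j\rangle$ is split among $\langle j_1\rangle,\langle j_2\rangle,\langle j_3\rangle$ via $\langle j\rangle\lesssim\langle j_1\rangle+\langle j_2\rangle+\langle j_3\rangle$ (so only two of the three factors need the $h^1$ norm). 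The fully $l^2$-level analogue, with all three factors in $l^2$, is exactly the ``$l^2$-estimate'' that fails in $2$D (Section~3), but the weaker bound above is all the local theory requires.

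Granting these, local well-posedness is the contraction mapping theorem applied to $\Phi(\vec u)(t)=e^{it\Delta}\vec u_0-i\int_0^te^{i(t-s)\Delta}\vec F(\vec u)(s)\,ds$ on a ball in $X^1((-T,T))$: the two estimates above show $\Phi$ maps a ball of radius $\sim\|\vec u_0\|_{L^2h^1}$ into itself and contracts there, once $T=T(E)$ is small enough that $\|e^{it\Delta}\vec u_0\|_{L^4_tL^4_xh^1((-T,T))}$ is small — true by density of Schwartz data and dominated convergence. For small data, $\|\vec u_0\|_{L^2h^1}\le\delta$ already forces $\|e^{it\Delta}\vec u_0\|_{L^4_tL^4_xh^1(\mathbb R)}\lesssim\delta$, so the same fixed point runs on $I=\mathbb R$ and yields a global solution with $\|\vec u\|_{X^1(\mathbb R)}\lesssim\delta$. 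Scattering then follows in the usual way: put $\vec u^{\pm}:=\vec u_0-i\int_0^{\pm\infty}e^{-is\Delta}\vec F(\vec u)(s)\,ds$, which converges in $L^2h^1$ since $\|\vec F(\vec u)\|_{L^{4/3}_tL^{4/3}_xh^1((T,\infty))}\to0$, and the same tail estimate gives $\|\vec u(t)-e^{it\Delta}\vec u^{\pm}\|_{L^2h^1}\to0$ as $t\to\pm\infty$.

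For stability, the standard two-step perturbation argument applies. With $\|\vec u\|_{L^4_tL^4_xh^a(I)}\le A$, partition $I$ into $O((A/\eta)^4)$ subintervals on each of which this norm is $\le\eta$; on a subinterval the difference $\vec w=\vec v-\vec u$ satisfies a Duhamel equation with source $\vec e$ plus trilinear terms in $\vec w$ and $\vec u$, and the difference nonlinear estimate closes a bootstrap for $\|\vec w\|_{X^a}$ provided $\eta$ and the data/error are small; chaining the bounds across subintervals (the output on one becoming the data on the next) controls $\|\vec v-\vec u\|_{L^4_tL^4_xh^a(I)}$, and a final bootstrap at the $h^1$ level, fed by the $h^a$ control just obtained together with the $L^2h^1$ smallness of the data difference, yields $\|\vec v-\vec u\|_{L^\infty_tL^2_xh^1(I)}$.

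The only step that is not boilerplate is the discrete trilinear/resonance estimate bounding $\|(\vec F(\vec u))_j\|_{h^a_j}$ and its difference version: it is the $2$D counterpart of the $1$D estimate of \cite{YZ}, is carried out in the appendix of \cite{Z1}, and is precisely where the geometry of $\mathcal R(j)$ in $\mathbb Z^2$ — the ``extra flexible direction'' discussed above — must be handled. I expect it to be the main obstacle; the contraction and perturbation steps are mechanical once it is available.
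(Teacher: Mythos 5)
Your proposal is correct and follows essentially the same route as the paper, which itself only sketches this theorem by citing the appendix of \cite{Z1} and Section 2 of \cite{YZ}: componentwise Strichartz estimates lifted to $L^2h^a$ via Minkowski, contraction mapping, and the standard perturbation/interval-subdivision argument. The trilinear resonance bound you isolate as the only non-boilerplate ingredient is precisely the paper's Lemma \ref{es:resonant} and the appendix estimates $\|\vec F(u)\|_{h^1}\lesssim\|\vec u\|_{h^\beta}^2\|\vec u\|_{h^1}$ and $\|\vec F(u)\|_{l^2}\lesssim\|\vec u\|_{l^2}\|\vec u\|_{h^\beta}^2$, so your identification of where the 2D resonance geometry enters (and why the failure of the pure $l^2$ estimate is harmless for the local theory) matches the paper's.
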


The real task is to generalize small data scattering result to large data scattering result. We note that, for classical NLS problems, this step is also highly nontrivial. We will use the standard concentration compactness/rigidity method established in Kenig-Merle \cite{KM,KM1}. In particular, we will apply Dodson's scheme \cite{D} which deals with the scattering for 2d mass-critical NLS. As shown above, the local theory and the small data scattering are standard. Moreover, the scattering norm is $L^4_{t,x}h^1$, whose finiteness implies scattering. We will see that we can reduce it to a smaller norm $L^4_{t,x}l^2$ (see Section 3.2). It suffices to show $\|u\|_{L^4_{t,x}l^2}\lesssim \infty$. Since the scheme is similar to the 1D analogue (see \cite{YZ}), we emphasize the new observations to fix the new difficulties as discussed in Section 1.3.\vspace{3mm}

1.\emph{Weaker estimates and the reduction of the scattering norm}\vspace{3mm}

Though the $l^2$-estimate does not hold for the 2D case, we can establish weaker estimates like (for some positive $\delta_1,\delta_2$ and $0<\beta<1$),
\begin{equation}
           \|\vec{F}(u)\|_{l^2} \lesssim \|\vec{u}\|_{l^2}\|\vec{u}\|^{2}_{h^{\beta}}.
\end{equation}
\begin{equation}
    \|\vec{F}(u)\|_{l^2} \lesssim \|\vec{u}\|^{\delta_1}_{l^2}\|\vec{u}\|^{3-\delta_1}_{h^1},
\end{equation}
and
\begin{equation}
           \|\vec{F}(u)\|_{h^1} \lesssim \|\vec{u}\|^{\delta_2}_{l^2}\|\vec{u}\|^{3-\delta_2}_{h^1},
\end{equation}
which are still helpful for us. (See Lemma \ref{es:resonant})\vspace{3mm}

Via the above estimates, we can reduce the scattering norm from the spacetime norm $L^4_{t,x}h^1$ to a smaller one $L^4_{t,x}l^2$ using the standard bootstrap argument. See Section 3 for more details. These estimates also help us in the other steps. (See Section 4 to Section 6.)\vspace{3mm}

2.\emph{Observations of the symmetries of the cubic resonances}\vspace{3mm}

As mentioned in Section 1.3, we can not do the estimates in the $l^2$ level as the 1D case due to the lack of $l^2$ estimate. As a consequence, we need to face regularities (in the discrete sense) in some estimates, which causes some trouble, i.e. destroying the symmetry). As we will show in Section 4, Section 5 and Section 6, we will use the $h^1$ level estimates instead of $l^2$ level estimates. Though the symmetric properties are not as good as the $l^2$ level case, we can still investigate some symmetric properties (see Section 4) and we are lucky enough to handle this model eventually.\vspace{3mm}

3.\emph{Modifications of the proof of the 1D analogue}\vspace{3mm}

Once we have the local theory and the nonlinear estimates for the cubic resonances, we will prove scattering for \eqref{maineq} via concentration compactness method using as in \cite{D,YZ}. To be more precise, long time Strichartz and frequency localized interaction Morawetz estimate are used. Since the nonlinear estimates are different and the estimates are of $h^1$-level, it causes some nontrivial changes in the proof. Thus we need to modify the proof of the 1D analogue \cite{YZ} suitably. See Section 5 and Section 6 for more details. We will state the arguments which follows in a natural way as in \cite{YZ} without proofs and give proofs for those where nontrivial changes happen.\vspace{3mm}

We will see more about the differences from the one discrete dimensional case in the following sections.\vspace{3mm}

\subsection{Organization of the rest of this paper}
In Section 2, we discuss the notations and the function spaces; in Section 3, we discuss the discrete estimates for the resonances, which may have its own interests; in Section 4, we discuss some useful observations and the preparations for the long time Strichartz estimate; in Section 5, we discuss the long time Strichartz estimate and Frequency localized interaction Morawetz estimate; in Section 6, we prove the rigidity theorem which concludes the main theorem; in Section 7 (Appendix), we make a few further remarks and include another proof for the discrete resonant nonlinearity.
\section{Preliminaries}
In this section, we discuss the notations and the function spaces.
\subsection{Notations}
Throughout this paper, we use $C$ to denote  the universal constant and $C$ may change line by line. We say $A\lesssim B$, if $A\leq CB$. We say $A\sim B$ if $A\lesssim B$ and $B\lesssim A$. We also use notation $C_{B}$ to denote a constant depends on $B$. We use usual $L^{p}$ spaces and Sobolev spaces $H^{s}$. Moreover, we write $\langle x \rangle= (1+|x|^2)^{\f{1}{2}}$, and $p'$ for the dual index of $p \in (1,+\infty)$ in the sense that $\frac{1}{p^{'}}+\frac{1}{p}=1$.

We regularly refer to the composed spacetime norms
\begin{equation}
    ||u||_{L^p_tL^q_z(I_t \times \mathbb{R}^m\times \mathbb{T}^n)}=\left(\int_{I_t}\left(\int_{\mathbb{R}^m\times \mathbb{T}^n} |u(t,z)|^q \, dz \right)^{\frac{p}{q}}  \, dt\right)^{\frac{1}{p}}.
\end{equation}
For $\vec{\phi}=\{\phi_p \}_{p \in \mathbb{Z}^2}$ a sequence of real-variable functions, we let
\begin{equation}
H^{s_2}h^{s_1}:=\{\vec{\phi}=\{ \phi_{p} \} : ||\vec{\phi}||_{H^{s_2}h^{s_1}}^2=\sum\limits_{p \in \mathbb{Z}^2} \langle p \rangle^{2s_1}||\phi_p||_{H^{s_2}}^{2} < +\infty \}.
\end{equation}
We note that $h^{0}=l^2$. Similarly, one can define norm $L^4_{t,x}h^1$ (taking the discrete norm $h^1$ first and then taking the spacetime norm).
\subsection{Function spaces}
In this section, we give the definition of $U^p_{\triangle}(h^{a})$,  $V^p_{\triangle}(h^{b})$, $a,b\in\{-1,0,1\}$ spaces and then prove corresponding bilinear Strichartz estimate in such spaces. These spaces are introduced in \cite{HTT1,HTT2} and have been widely used in papers such as \cite{D1,D,HP,IPT3,IPRT3}. We extend the setting to the system case as \cite{CGZ,YZ}.

\begin{definition} Let $1\leq p <\infty $.  Then $U^p_{\triangle}(h^{a})$, $a\in\{-1,0,1\}$ is an atomic space, where atoms are piecewise solutions
to the linear equation
$$\vec{u}=\sum\limits_{k}\chi_{[t_k,t_{k+1}]}(t)e^{it\triangle}\vec{u}_k(x),\,\,
\sum\limits_{k}\|\vec{u}_k(x)\|_{L^2h^{a}}^p=1.$$
We define $\|\cdot\|_{U^p_{\triangle}(h^{a})}$ as
$$\|\vec{u}\|_{U^p_{\triangle}(h^{a})}:=\inf\left\{\sum\limits_{\lambda}|c_{\lambda}|:\vec{u} =\sum_{\lambda}c_{\lambda}\vec{u}^{\lambda},\,\vec{u}^{\lambda} \,are\, U^p_{\triangle}(h^{a}) \,atoms\right\}.$$\\
Let $DU^p_{\triangle}(h^{a})$ be the space
$$DU^p_{\triangle}(h^{a})=\{(i\partial_{t}+\triangle)\vec{u}:\vec{u}\in U^p_{\triangle}(h^{a})\}$$
and the norm is
$$\|(i\partial_{t}+\triangle)\vec{u}\|_{DU^p_{\triangle}(h^{a})} :=\left\|\int_{0}^{t}e^{i(t-s)\triangle}(i\partial_{s}\vec{u}
+\triangle\vec{u})(s)ds\right\|_{U^p_{\triangle}(h^{a})}.$$
\end{definition}

\begin{definition}Let $1\leq p <\infty $.  Then we define $V^p_{\triangle}(h^{b})$, $b\in\{-1,0,1\}$ as the space of right continuous functions $\vec{v}\in L_{t}^{\infty}L_{x}^{2}h^{b}$ such that
$$\|\vec{v}\|_{V^p_{\triangle}(h^{b})}^p:=\|\vec{v}\|_{L_{t}^{\infty}L_{x}^{2}h^{b}}^p+\sup_{\{t_k\}\nearrow}\sum_{k}\|
e^{-it_{k+1}\triangle}\vec{v}(t_{k+1})-e^{-it_{k}\triangle}\vec{v}(t_{k})\|_{L^2h^{b}}^{p}<\infty,$$ 
where the supremum is taken over increasing sequences $t_{k}$.
\end{definition}

We collect some useful properties about $U^p_{\triangle}(h^{a})$,  $V^p_{\triangle}(h^{b})$, $a,b\in\{-1,0,1\}$ spaces below.
\begin{proposition}\label{pr-y5.3}
$U^p_{\triangle}(h^{a})$,  $V^p_{\triangle}(h^{b})$ space has the following properties:
\begin{enumerate}
  \item $U^p_{\triangle}(h^{a})$,  $V^p_{\triangle}(h^{b})$ is a Banach space.
  \item $U^p_{\triangle}(h^{a}) \subset V^p_{\triangle}(h^{a}) \subset U^q_{\triangle}(h^{a})$,  $1<p<q<\infty$.
  \item $(DU^p_{\triangle}(h^{a}))^{*}=V^{p'}_{\triangle}(h^{-a}),\frac{1}{p}+\frac{1}{p'}=1.$ and $1<p<\infty$.
  \item These spaces are also closed under truncation in time.$$\chi_{I}:U^p_{\triangle}(h^{a})\rightarrow U^p_{\triangle}(h^{a});\,\,\,\,\chi_{I}:V^p_{\triangle}(h^{b})\rightarrow V^p_{\triangle}(h^{b}).$$
  \item  Suppose $J = I_1 \cup I_2 , I_1 = [a,b], I_2 = [b,c], a\leq b\leq c$, then
  $$\|\vec{u}\|_{U^p_{\triangle}(h^{a};J)}^{p}\leq\|\vec{u}\|_{U^p_{\triangle}(h^{a};I_1)}^{p}+
  \|\vec{u}\|_{U^p_{\triangle}(h^{a};I_2)}^{p},$$
  $$\|\vec{u}\|_{U^p_{\triangle}(h^{a};I_1)}\leq \|\vec{u}\|_{U^p_{\triangle}(h^{a};J)}.$$
  \item $\|\vec{u}\|_{L^p_t L^q_x h^{a}}+\|\vec{u}\|_{L^{\infty}_t L^2_x h^{a}}\lesssim \|\vec{u}\|_{U^p_{\triangle}(h^{a})}$, $(p,q)$ is an admissible pair, $p>2$.  i.e. $\frac{1}{p}+\frac{1}{q}=\frac{1}{2}.$
  \item There is the easy estimate
      $$\|\vec{u}\|_{U^p_{\triangle}(h^{a})}\lesssim\|\vec{u}(0)\|_{L^2h^{a}}+
      \|(i\partial_{t}+\triangle)\vec{u}\|_{DU^p_{\triangle}(h^{a})}.$$
\end{enumerate}
\end{proposition}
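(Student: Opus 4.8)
The plan is to reduce all seven assertions to the abstract $U^p$–$V^p$ calculus (see \cite{HTT1,HTT2}, and \cite{CGZ,YZ} for the vector-valued extension) applied to the Hilbert space $\mathcal H_a:=L^2_xh^a(\mathbb{R}^2\times\mathbb{Z}^2)$ together with the one-parameter unitary group $e^{it\triangle}$ acting componentwise (the Laplacian acting only on the $\mathbb{R}^2$-variable). The first step is therefore to record the two structural facts that make the machinery applicable in the present setting: (i) $\mathcal H_a$ is a Hilbert space, with inner product $\langle\vec f,\vec g\rangle_{\mathcal H_a}=\sum_{j}\langle j\rangle^{2a}\int_{\mathbb{R}^2}f_j\bar g_j\,dx$; and (ii) for each $t\in\mathbb{R}$, $e^{it\triangle}$ is unitary on $\mathcal H_a$, since it is unitary on $L^2_x(\mathbb{R}^2)$ in each component $j$ and the weight $\langle j\rangle^{2a}$ is untouched by the spatial flow. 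Granting (i)--(ii), properties (1), (4) and (5) follow directly from the definitions exactly as in the scalar case: (1) is completeness of atomic/variation spaces, (4) holds because multiplication by $\chi_I$ sends a piecewise linear solution to a piecewise linear solution and sends step functions in $V^p$ to step functions, and (5) is subadditivity of $x\mapsto x^p$ over a partition of the time interval.

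For (2) and (7) I would again invoke the abstract results: the chain $U^p_\triangle(h^a)\subset V^p_\triangle(h^a)\subset U^q_\triangle(h^a)$ for $1<p<q<\infty$ and the energy estimate $\|\vec u\|_{U^p_\triangle(h^a)}\lesssim\|\vec u(0)\|_{L^2h^a}+\|(i\partial_t+\triangle)\vec u\|_{DU^p_\triangle(h^a)}$ hold for $U^p$, $V^p$ built over an arbitrary Hilbert space with a unitary evolution, hence transfer verbatim to $\mathcal H_a$. Property (3), the duality $(DU^p_\triangle(h^a))^*=V^{p'}_\triangle(h^{-a})$, is the Hilbert-space duality theorem of the abstract theory; the only point needing care is the identification of the dual space: under the $L^2_xl^2_j$ pairing $(\vec f,\vec g)\mapsto\sum_j\int_{\mathbb{R}^2}f_j\bar g_j\,dx$ one has $(\mathcal H_a)^*\cong\mathcal H_{-a}=L^2_xh^{-a}$, because $\bigl|\sum_j\int f_j\bar g_j\bigr|\le\|\vec f\|_{L^2h^a}\|\vec g\|_{L^2h^{-a}}$ by Cauchy--Schwarz in $j$ (pairing $\langle j\rangle^{a}$ against $\langle j\rangle^{-a}$), with the bound attained. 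This is precisely where the sign of the weight flips in the statement of (3).

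The one assertion that is not purely abstract is (6), the Strichartz embedding $\|\vec u\|_{L^p_tL^q_xh^a}+\|\vec u\|_{L^\infty_tL^2_xh^a}\lesssim\|\vec u\|_{U^p_\triangle(h^a)}$ for admissible $(p,q)$ with $p>2$. Here the plan is: by the atomic definition together with the triangle inequality in $\ell^p$ (valid since $p\ge1$), it suffices to bound a single atom $\vec u=\sum_k\chi_{[t_k,t_{k+1}]}(t)e^{it\triangle}\vec u_k$ with $\sum_k\|\vec u_k\|_{L^2h^a}^p=1$. On each slab, applying the standard $2$D Strichartz estimate in each component $j$ gives $\|e^{it\triangle}u_{k,j}\|_{L^p_tL^q_x}\lesssim\|u_{k,j}\|_{L^2_x}$; since a $2$D-admissible pair with $p>2$ satisfies $\tfrac1p+\tfrac1q=\tfrac12$ and hence $p,q\ge2$, Minkowski's inequality lets one pass from $L^p_tL^q_xl^2_j$ to $l^2_jL^p_tL^q_x$, so after inserting the weight $\langle j\rangle^{a}$ one gets $\|\chi_{[t_k,t_{k+1}]}e^{it\triangle}\vec u_k\|_{L^p_tL^q_xh^a}\lesssim\|\vec u_k\|_{L^2h^a}$, and summing the disjoint slabs in $\ell^p_k$ yields $\|\vec u\|_{L^p_tL^q_xh^a}^p\lesssim\sum_k\|\vec u_k\|_{L^2h^a}^p=1$; the $L^\infty_tL^2_xh^a$ bound is immediate from the unitarity in (ii), which gives $\|\vec u(t)\|_{L^2h^a}=\|\vec u_k\|_{L^2h^a}$ on slab $k$. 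I expect the only genuinely delicate book-keeping to be this Minkowski interchange and the verification that the $2$D-admissibility range indeed forces $q\ge2$; everything else is a transcription of the scalar $U^p$–$V^p$ calculus to the vector-valued Hilbert space $\mathcal H_a$.
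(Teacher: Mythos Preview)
Your proposal is correct and is precisely the standard argument the paper has in mind: the paper's own proof simply reads ``The proof is standard. the reader can refer to Section 4 in \cite{YZ} so we omit it here,'' and Section~4 of \cite{YZ} carries out exactly the reduction you describe---transporting the abstract $U^p$/$V^p$ calculus of \cite{HTT1,HTT2} to the vector-valued Hilbert space $L^2_xh^a$, with the only nontrivial step being the componentwise Strichartz estimate plus the Minkowski interchange in item~(6).
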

\begin{proof}The proof is standard. the reader can refer to Section 4 in \cite{YZ} so we omit it here.
\end{proof}    In the following we will use  $U^p_{\triangle}(h^{1})$ spaces to define long time Strichartz estimate norm and perform  the arguments in the section 5 of \cite{D} on $h^1$ level at the discrete direction of ``$j$". Therefore we will focus on the properties of the spaces involved $h^1$ norm at the discrete direction of ``$j$" below.
\begin{lemma}\label{le-y4.5}
Suppose $J=\cup ^k_{m=1}J^m$, where $J^m$ are consecutive intervals, $J^m=[a_m,b_m]$, $a_{m+1}=b_m$. Also suppose that $\vec{F}=\{F_j\}_{j\in\mathbb{Z}}\in L ^1_t L^2_x h^{1} (J \times\mathbb{R}^2)$ (however our bound will not depend on $\|\vec{F}\|_{L ^1_t L^2_x h^{1}}$.) Then
for any $t_0 \in J$,
\begin{equation}\label{eq-y4.1'}
\begin{split}
  \big\|\int^t_{t_0} e^{i(t-\tau)\triangle}\vec{F}(\tau)d\tau \big\|_{U^2_{\triangle}(h^{1};J\times \mathbb{R}^2)} &\lesssim \sum^{k}_{m=1} \big\| \int_{J^m} e^{-i\tau\triangle}\vec{F}(\tau)d\tau \big\|_{L^2_x h^{1}} + \big[\sum^{k}_{m=1}(\|\vec{F}\|_{DU^{2}_{\triangle}(h^{1};J^m\times \mathbb{R}^2)})^2 \big]^{1/2}.
\end{split}
\end{equation}Where $$\|\vec{F}\|_{DU^{2}_{\triangle}(h^{1};J^m\times \mathbb{R}^2)}:=\sup_{\|\vec{v}\|_{V^{2}_{\triangle}(h^{-1};J^m\times \mathbb{R}^2)}=1}\int_{J^m}\sum_{j\in \mathbb{Z}}F_j(\tau)v_j(\tau) d\tau.$$
\end{lemma}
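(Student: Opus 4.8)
The plan is to reduce the estimate to an additivity property of the $U^2_\triangle$-norm over consecutive intervals (item (5) of Proposition \ref{pr-y5.3}) combined with the easy energy estimate (item (7)) applied on each subinterval $J^m$, and then to reassemble the pieces using Duhamel's formula. First I would set, for $t\in J$, the function $\vec{w}(t):=\int_{t_0}^t e^{i(t-\tau)\triangle}\vec{F}(\tau)\,d\tau$, which solves $(i\partial_t+\triangle)\vec{w}=\vec{F}$ with $\vec{w}(t_0)=0$; the point is that $\vec{w}$ need not be small in any norm that sees $\|\vec{F}\|_{L^1_tL^2_xh^1}$, so we must avoid ever invoking that quantity. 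On a single interval $J^m=[a_m,b_m]$ I would write, using the group property, $\vec{w}(t)=e^{i(t-a_m)\triangle}\vec{w}(a_m)+\int_{a_m}^t e^{i(t-\tau)\triangle}\vec{F}(\tau)\,d\tau$ for $t\in J^m$, so that on $J^m$ the function $\vec{w}$ is a linear solution with data $\vec{w}(a_m)$ plus a Duhamel term driven by $\vec{F}$ restricted to $J^m$. By item (7),
\begin{equation*}
\|\vec{w}\|_{U^2_\triangle(h^1;J^m\times\mathbb{R}^2)}\lesssim \|\vec{w}(a_m)\|_{L^2_xh^1}+\|\vec{F}\|_{DU^2_\triangle(h^1;J^m\times\mathbb{R}^2)},
\end{equation*}
where the second term is exactly the dual-tested quantity defined in the statement (here one uses item (3), duality $(DU^2)^\ast=V^2(h^{-1})$, restricted to $J^m$).

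Next I would control $\|\vec{w}(a_m)\|_{L^2_xh^1}$. Telescoping from $t_0$, one has $\vec{w}(a_m)=\int_{t_0}^{a_m}e^{i(a_m-\tau)\triangle}\vec{F}(\tau)\,d\tau$, and since the endpoints $a_1,\dots,a_{k+1}$ partition $[t_0,\cdot]$ (after reordering so that $t_0$ is an endpoint, which we may assume by splitting the interval containing $t_0$ and relabelling), we get
\begin{equation*}
\|\vec{w}(a_m)\|_{L^2_xh^1}=\Big\|\,\sum_{n: J^n\subset[t_0,a_m]}e^{-i b_n\triangle}\!\!\int_{J^n}e^{i(b_n-\tau)\triangle}\vec{F}(\tau)\,d\tau\,\cdot e^{ia_m\triangle}\Big\|_{L^2_xh^1}\le \sum_{n=1}^{k}\Big\|\int_{J^n}e^{-i\tau\triangle}\vec{F}(\tau)\,d\tau\Big\|_{L^2_xh^1},
\end{equation*}
using unitarity of $e^{it\triangle}$ on $L^2_xh^1$ and the triangle inequality; the same bound holds for the intervals lying on the other side of $t_0$ with a sign change that does not affect norms. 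Thus each $\|\vec{w}(a_m)\|_{L^2_xh^1}$ is dominated by the first sum on the right of \eqref{eq-y4.1'}.

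Finally I would glue the subintervals using item (5): since $J=\cup_{m=1}^k J^m$ with consecutive intervals, iterating the sub-additivity $\|\vec{w}\|^2_{U^2_\triangle(h^1;J)}\le\sum_{m=1}^k\|\vec{w}\|^2_{U^2_\triangle(h^1;J^m)}$ gives
\begin{equation*}
\|\vec{w}\|_{U^2_\triangle(h^1;J\times\mathbb{R}^2)}\le\Big(\sum_{m=1}^k\|\vec{w}\|^2_{U^2_\triangle(h^1;J^m\times\mathbb{R}^2)}\Big)^{1/2}\lesssim\Big(\sum_{m=1}^k\|\vec{w}(a_m)\|_{L^2_xh^1}^2\Big)^{1/2}+\Big(\sum_{m=1}^k\|\vec{F}\|^2_{DU^2_\triangle(h^1;J^m\times\mathbb{R}^2)}\Big)^{1/2},
\end{equation*}
and the first term is bounded, via the previous step and $\ell^2\hookrightarrow\ell^1$ being used in the wrong direction — so here I would instead keep the $\ell^2$ sum of $\|\vec{w}(a_m)\|_{L^2_xh^1}$ and bound each summand by the \emph{full} sum $\sum_n\|\int_{J^n}e^{-i\tau\triangle}\vec{F}\,d\tau\|_{L^2_xh^1}$, which is wasteful but harmless, or more cleanly observe that the telescoping shows $\|\vec{w}(a_m)\|_{L^2_xh^1}$ is a partial sum and hence the $\ell^2$ norm over $m$ of these partial sums is $\lesssim\sum_n\|\int_{J^n}e^{-i\tau\triangle}\vec{F}\,d\tau\|_{L^2_xh^1}$ by the triangle inequality in $\ell^2$ followed by Minkowski. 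This yields precisely \eqref{eq-y4.1'}. The main obstacle — and the only genuinely delicate point — is the bookkeeping in the gluing step: one must be careful that the $U^2$ norm on $J$ is built from the $\chi_{J^m}$-truncations consistently with item (5), that the telescoped data $\vec{w}(a_m)$ are handled so that no $\ell^1$-over-$\ell^2$ loss creeps in, and that the interval containing $t_0$ is split so that $t_0$ becomes a common endpoint; the functional-analytic inputs (items (5), (7), (3)) are all quoted from Proposition \ref{pr-y5.3}, so no new harmonic analysis is needed.
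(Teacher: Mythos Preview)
Your gluing step has a genuine gap that neither of your proposed fixes resolves. After applying sub-additivity to the full $\vec{w}$ you need to control $\big(\sum_{m}\|\vec{w}(a_m)\|_{L^2_xh^1}^2\big)^{1/2}$, and each $\|\vec{w}(a_m)\|$ is bounded only by a partial sum $S_m=\sum_{n<m}c_n$ with $c_n=\|\int_{J^n}e^{-i\tau\triangle}\vec{F}\,d\tau\|_{L^2_xh^1}$. Bounding every summand by the full sum gives $\sqrt{k}\sum_n c_n$, which is not harmless since the lemma is applied with $k$ arbitrarily large in the long-time Strichartz argument. Your ``Minkowski'' alternative is simply false: take $c_1=1$, $c_n=0$ for $n\ge 2$; then $\big(\sum_m S_m^2\big)^{1/2}=\sqrt{k-1}$ while $\sum_n c_n=1$. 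The $\ell^2$-norm of partial sums of a nonnegative sequence is in general not controlled by the $\ell^1$-norm of the increments.

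The correct route (and the one in Dodson's Lemma~3.4, to which the paper defers) is to split \emph{before} applying sub-additivity. Write $\vec{w}=\vec{w}_1+\vec{w}_2$ on $J$ with $\vec{w}_1(t)=\sum_m\chi_{J^m}(t)\,e^{i(t-a_m)\triangle}\vec{w}(a_m)$ and $\vec{w}_2(t)=\sum_m\chi_{J^m}(t)\int_{a_m}^t e^{i(t-\tau)\triangle}\vec{F}(\tau)\,d\tau$. For $\vec{w}_1$, observe that $e^{-it\triangle}\vec{w}_1(t)$ is piecewise constant with jumps $\vec{G}(a_{m+1})-\vec{G}(a_m)=\int_{J^m}e^{-i\tau\triangle}\vec{F}\,d\tau$, where $\vec{G}(t)=\int_{t_0}^t e^{-i\tau\triangle}\vec{F}\,d\tau$; telescoping gives $\vec{w}_1=\sum_m \chi_{[a_{m+1},b_k]}(t)\,e^{it\triangle}\big(\vec{G}(a_{m+1})-\vec{G}(a_m)\big)$ (plus the $\vec{G}(a_1)$ term, which vanishes after arranging $t_0$ to be an endpoint), and the triangle inequality in $U^2_\triangle(h^1;J)$ directly yields $\|\vec{w}_1\|_{U^2_\triangle(h^1;J)}\le\sum_m\|\int_{J^m}e^{-i\tau\triangle}\vec{F}\,d\tau\|_{L^2_xh^1}$ with no $\ell^2$-sum and hence no $\sqrt{k}$ loss. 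For $\vec{w}_2$, each restriction $\vec{w}_2|_{J^m}$ has zero initial data at $a_m$, so item~(5) plus the definition of $DU^2$ give $\|\vec{w}_2\|_{U^2_\triangle(h^1;J)}\le\big(\sum_m\|\vec{F}\|_{DU^2_\triangle(h^1;J^m)}^2\big)^{1/2}$. The point is that the two pieces want different summability ($\ell^1$ versus $\ell^2$), and this is exactly what the two terms on the right of \eqref{eq-y4.1'} reflect; applying sub-additivity to the undecomposed $\vec{w}$ forces both into $\ell^2$ and loses a factor of $\sqrt{k}$ on the linear part.
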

\begin{proof}
The proof is the same as Lemma 3.4 in \cite{D}. So we won't repeat it here.
\end{proof}
\begin{proposition}[bilinear Strichartz estimate]\label{pr-y5.4}
$\frac{1}{p}+\frac{1}{q}=1,\vec{u}_{0}=\{u_{0,j}\}_{j\in\mathbb{Z}}, \vec{v}_0=\{v_{0,j}\}_{j\in\mathbb{Z}}$. Assume  $\hat{u}_{0,j}$ is supported on $|\xi|\sim N$ and $\hat{v}_{0,j}$ is supported on $|\xi|\sim M$ for $j\in\mathbb{Z}$. If $M \ll N$, then
\begin{equation}\label{eq-y4.2'}
  \left\|\|e^{it\triangle}\vec{u}_{0}\|_{h^{1}}\cdot\|e^{\pm it\triangle}\vec{v}_0\|_{h^{1}}\right\|_{L_t^p L_x^q(\mathbb{R}\times\mathbb{R}^2)}\lesssim\big(\frac{M}{N}\big)^{\frac{1}{p}}\|\vec{u}_{0}\|_{L_x^2h^{1}}
  \|\vec{v}_0\|_{L_x^2h^{1}}.
\end{equation}
\end{proposition}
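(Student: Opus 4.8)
The plan is to reduce the vector-valued bilinear estimate to the classical scalar bilinear Strichartz estimate on $\mathbb{R}^2$ by first dealing with the discrete variable $j$. I would start from the pointwise-in-$(t,x)$ Cauchy–Schwarz bound
\begin{equation*}
\|e^{it\triangle}\vec{u}_0\|_{h^1}\cdot\|e^{\pm it\triangle}\vec{v}_0\|_{h^1}
=\Big(\sum_{j}\langle j\rangle^2 |e^{it\triangle}u_{0,j}|^2\Big)^{1/2}\Big(\sum_{k}\langle k\rangle^2 |e^{\pm it\triangle}v_{0,k}|^2\Big)^{1/2},
\end{equation*}
and then expand the product of the two sums so that the left-hand side of \eqref{eq-y4.2'} becomes (after an $\ell^1$-triangle inequality in the pair $(j,k)$, which is harmless because the weights $\langle j\rangle,\langle k\rangle$ are exactly what make the $h^1$ norms appear) a sum over $j,k$ of terms of the shape $\langle j\rangle\langle k\rangle\,\big\|\,|e^{it\triangle}u_{0,j}|\cdot|e^{\pm it\triangle}v_{0,k}|\,\big\|_{L^p_tL^q_x}$.

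Next I would invoke the scalar bilinear Strichartz estimate on $\mathbb{R}^2$ (the Bourgain/Ozawa–Tsutsumi-type inequality, which under the frequency separation $M\ll N$ gives the gain $(M/N)^{1/p}$): for each fixed pair $(j,k)$,
\begin{equation*}
\big\|\,|e^{it\triangle}u_{0,j}|\cdot|e^{\pm it\triangle}v_{0,k}|\,\big\|_{L^p_tL^q_x(\mathbb{R}\times\mathbb{R}^2)}\lesssim \Big(\frac{M}{N}\Big)^{1/p}\|u_{0,j}\|_{L^2_x}\|v_{0,k}\|_{L^2_x},
\end{equation*}
using the frequency support hypotheses $\widehat{u_{0,j}}\subset\{|\xi|\sim N\}$ and $\widehat{v_{0,k}}\subset\{|\xi|\sim M\}$, which hold uniformly in $j,k$. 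Summing the resulting bound over $j$ and $k$ separately, the $j$-sum reconstitutes $\big(\sum_j\langle j\rangle^2\|u_{0,j}\|_{L^2_x}^2\big)^{1/2}=\|\vec{u}_0\|_{L^2_xh^1}$ after one more Cauchy–Schwarz in $j$ (absorbing the excess weight via a fixed convergent factor such as $\langle j\rangle^{-1-\epsilon}$ — but in fact the weights $\langle j\rangle$ sit exactly where they are needed if one is careful to keep the square structure, so it is cleanest to \emph{not} break the square and instead apply the scalar estimate inside the sum after a single Minkowski/Cauchy–Schwarz step), and similarly the $k$-sum gives $\|\vec{v}_0\|_{L^2_xh^1}$. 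Combining yields exactly \eqref{eq-y4.2'}.

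The main technical point — really the only place one must be attentive — is the interchange of the discrete summation in $j$ (and $k$) with the spacetime $L^p_tL^q_x$ norm: one needs Minkowski's inequality in the form $\|\sum_j f_j\|_{L^p_tL^q_x}\le \sum_j\|f_j\|_{L^p_tL^q_x}$ together with Cauchy–Schwarz in $j$ with the weight $\langle j\rangle$, and this must be organized so that no logarithmic or $\langle j\rangle^{\epsilon}$ loss is incurred; this is possible because $h^1$ is an $\ell^2$-based norm and the bilinear estimate is itself $L^2$-based in $x$, so the two square structures match. I expect that the underlying scalar bilinear Strichartz estimate with the $(M/N)^{1/p}$ gain is quoted from the literature (e.g. as in \cite{D} or \cite{HTT1}), so the contribution of this proposition is purely the bookkeeping of the discrete direction, which is routine once the square-function structure is respected. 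As with the earlier propositions, one could alternatively record that the proof is "the same as the scalar case after summing in $j$" and refer to \cite{YZ} or \cite{D}; but the sketch above makes the reduction explicit.
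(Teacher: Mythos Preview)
The paper states this proposition without proof (it passes directly to Remark~\ref{convolution} and then to the $U^p_\Delta$-version, Proposition~\ref{pr-y5.5}); the result is evidently meant to be inherited from the scalar bilinear Strichartz estimate in \cite{D} (or its $h^1$-valued analogue in \cite{YZ}) after bookkeeping in the discrete direction, so there is no paper-proof to compare against line by line.

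Your first attempted reduction --- expanding the product of $h^1$-norms and passing to an $\ell^1$-sum in $(j,k)$ by the triangle inequality --- does \emph{not} close: after the scalar estimate you are left with $\big(\sum_j \langle j\rangle \|u_{0,j}\|_{L^2_x}\big)\big(\sum_k \langle k\rangle \|v_{0,k}\|_{L^2_x}\big)$, an $\ell^1$-type quantity that is not controlled by the $\ell^2$-based $\|\vec u_0\|_{L^2_xh^1}\|\vec v_0\|_{L^2_xh^1}$. You flag this and retreat to ``keep the square structure and apply the scalar estimate inside the sum'', but the justification you give (``$h^1$ is $\ell^2$-based and the bilinear estimate is $L^2$-based in $x$, so the square structures match'') is only valid at the single endpoint $p=q=2$; for $p\neq 2$ the Minkowski swap between $\ell^2_{j,k}$ and $L^p_tL^q_x$ goes the wrong way in at least one exponent, and your sketch does not say how to recover.

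The clean route is: (i) treat $p=q=2$ by squaring, so that
\[
\big\|\,\|e^{it\Delta}\vec u_0\|_{h^1}\|e^{\pm it\Delta}\vec v_0\|_{h^1}\big\|_{L^2_{t,x}}^2=\sum_{j,k}\langle j\rangle^2\langle k\rangle^2\big\|(e^{it\Delta}u_{0,j})(e^{\pm it\Delta}v_{0,k})\big\|_{L^2_{t,x}}^2,
\]
and the scalar bilinear estimate applies term-by-term with no loss; (ii) treat the trivial endpoint $p=\infty$, $q=1$ by H\"older in $x$ and $L^2$-conservation; (iii) for general $2\le p\le\infty$ use the H\"older/log-convexity interpolation $\|f\|_{L^p_tL^q_x}\le \|f\|_{L^2_{t,x}}^{2/p}\|f\|_{L^\infty_tL^1_x}^{1-2/p}$, which sits exactly on the line $\frac1p+\frac1q=1$ and produces the gain $(M/N)^{1/p}$. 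Your sketch contains (i) in spirit but omits (ii)--(iii), which is where the general-$p$ statement actually comes from.
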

\begin{remark}\label{convolution}
\begin{enumerate}
  \item Suppose that $g(t,x-y)$ and $h(t,x-z)$ are convolution kernels with the bounds
\begin{equation}\label{condition1}
 \|\sup_{t\in \mathbb{R}}|g(t,x)|\|_{L^1(\mathbb{R}^2)}\lesssim1\quad and\quad \|\sup_{t\in \mathbb{R}}|h(t,x)|\|_{L^1(\mathbb{R}^2)}\lesssim1,
\end{equation}then under the hypothesis of Proposition \ref{pr-y5.4}, we have
\begin{equation}\label{coneq-y4.2'}
  \left\|\|g\ast e^{it\triangle}\vec{u}_{0}\|_{h^{1}}\cdot\|h\ast e^{\pm it\triangle}\vec{v}_0\|_{h^{1}}\right\|_{L_t^p L_x^q(\mathbb{R}\times\mathbb{R}^2)}\lesssim\big(\frac{M}{N}\big)^{\frac{1}{p}}\|\vec{u}_{0}\|_{L_x^2h^{1}}
  \|\vec{v}_0\|_{L_x^2h^{1}},
\end{equation}where $\|g\ast e^{it\triangle}\vec{u}_{0}\|_{h^{1}}=\big(\sum_j|g\ast e^{it\triangle}u_{0,j}|^2\big)^{1/2}$. The kernels of $P_{\xi(t),j}$, $P_{\xi(t),\leq j}$ and $P_{\xi(t),\geq j}$ all satisfy \eqref{condition1}.
  \item For $a,b\in\{-1,0,1\}$, we still have
  \begin{equation}\label{eq-yl4.2''}
  \left\|\|e^{it\triangle}\vec{u}_{0}\|_{h^{a}}\cdot\|e^{\pm it\triangle}\vec{v}_0\|_{h^{b}}\right\|_{L_t^p L_x^q(\mathbb{R}\times\mathbb{R}^2)}\lesssim\big(\frac{M}{N}\big)^{\frac{1}{p}}\|\vec{u}_{0}\|_{L_x^2h^{a}}
  \|\vec{v}_0\|_{L_x^2h^{b}}.
\end{equation}
\end{enumerate}
\end{remark}

\begin{proposition}\label{pr-y5.5}
 For $\vec{u}=\{u_j\}_{j\in\mathbb{Z}},\vec{v}=\{v_j\}_{j\in\mathbb{Z}}$, we assume $supp\ \ \hat{u}_j\subset\{\xi:|\xi|\sim N\},supp\ \ \hat{v}_j\subset\{\xi:|\xi|\sim M\}$. If $M\ll  N$, we have
\begin{equation}
\left\|\|\vec{u}\|_{h^{1}}\cdot\|\vec{v}\|_{h^{1}}\right\|_{L^p_t L^q_x}\lesssim\big(\frac{M}{N}\big)^{\frac{1}{p}}\|\vec{u}\|_{U^p_{\triangle}(h^{1})} \|\vec{v}\|_{U^p_{\triangle}(h^{1})},
\end{equation}where $\frac{1}{p}+\frac{1}{q}=1$.
\end{proposition}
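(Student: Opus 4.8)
The plan is to deduce Proposition \ref{pr-y5.5} from the bilinear Strichartz estimate of Proposition \ref{pr-y5.4} (together with the kernel-robust version in Remark \ref{convolution}) by the standard $U^p$-atomic transference argument. The point is that Proposition \ref{pr-y5.4} is precisely the special case of Proposition \ref{pr-y5.5} in which $\vec{u}$ and $\vec{v}$ are free solutions $e^{it\triangle}\vec{u}_0$, $e^{it\triangle}\vec{v}_0$; since an arbitrary $U^p_{\triangle}(h^1)$ function is, by definition, an absolutely convergent superposition of atoms, each atom being a piecewise-in-time concatenation of such free evolutions, one upgrades the free estimate to the $U^p$ estimate by bilinearity and the triangle inequality.

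First I would reduce to atoms: write $\vec{u}=\sum_\lambda c_\lambda \vec{u}^\lambda$ and $\vec{v}=\sum_\mu d_\mu \vec{v}^\mu$ with $\sum_\lambda |c_\lambda|\le 2\|\vec{u}\|_{U^p_{\triangle}(h^1)}$, $\sum_\mu |d_\mu|\le 2\|\vec{v}\|_{U^p_{\triangle}(h^1)}$, where each $\vec{u}^\lambda$, $\vec{v}^\mu$ is a $U^p_{\triangle}(h^1)$ atom and (since Fourier support in $x$ is preserved by the flow and by the decomposition into atoms) the pieces of $\vec{u}^\lambda$ have $x$-Fourier support in $|\xi|\sim N$ and those of $\vec{v}^\mu$ in $|\xi|\sim M$. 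By Minkowski's inequality in $h^1$ and then the triangle inequality in $L^p_tL^q_x$,
\begin{equation*}
\left\|\,\|\vec{u}\|_{h^1}\|\vec{v}\|_{h^1}\,\right\|_{L^p_tL^q_x}\le \sum_{\lambda,\mu}|c_\lambda||d_\mu|\left\|\,\|\vec{u}^\lambda\|_{h^1}\|\vec{v}^\mu\|_{h^1}\,\right\|_{L^p_tL^q_x},
\end{equation*}
so it suffices to prove the bound $\left\|\|\vec{u}^\lambda\|_{h^1}\|\vec{v}^\mu\|_{h^1}\right\|_{L^p_tL^q_x}\lesssim (M/N)^{1/p}$ for a single pair of atoms, uniformly.

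Next I would handle a single pair of atoms. Write the atom $\vec{u}^\lambda=\sum_k \chi_{[t_k,t_{k+1}]}(t)e^{it\triangle}\vec{u}_k$ with $\sum_k \|\vec{u}_k\|^p_{L^2h^1}=1$, and similarly $\vec{v}^\mu=\sum_l \chi_{[s_l,s_{l+1}]}(t)e^{it\triangle}\vec{v}_l$ with $\sum_l\|\vec{v}_l\|^p_{L^2h^1}=1$. Refine the two partitions to a common partition $\{\tau_n\}$; on each interval $[\tau_n,\tau_{n+1}]$ both functions are free evolutions, so by the time-localized form of Proposition \ref{pr-y5.4} (the estimate holds on $\mathbb{R}$, hence on any subinterval after inserting a cutoff, which is harmless in $L^p_t$),
\begin{equation*}
\left\|\,\|\vec{u}^\lambda\|_{h^1}\|\vec{v}^\mu\|_{h^1}\,\right\|_{L^p_tL^q_x([\tau_n,\tau_{n+1}])}\lesssim \Big(\tfrac{M}{N}\Big)^{1/p}\|\vec{u}^{(n)}\|_{L^2h^1}\|\vec{v}^{(n)}\|_{L^2h^1},
\end{equation*}
where $\vec{u}^{(n)},\vec{v}^{(n)}$ are the (constant-in-the-sense-of-free-data) profiles active on that interval. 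Summing the $p$-th powers over $n$ and using $p=q'$ so that $q/p\ge 1$ allows Hölder/embedding of $\ell^1$ into $\ell^{q/p}$ appropriately — more precisely, one sums in $n$ using that $\sum_n\|\vec{u}^{(n)}\|^p\lesssim \sum_k\|\vec{u}_k\|^p = 1$ and likewise in the other variable, together with the elementary inequality $\sum_n a_n b_n \le (\sum_n a_n^p)^{1/p}(\sum_n b_n^{p'})^{1/p'}$ or simply $a_nb_n\le a_n^p+b_n^p$ after the raw $L^p$-orthogonality — to conclude $\left\|\|\vec{u}^\lambda\|_{h^1}\|\vec{v}^\mu\|_{h^1}\right\|_{L^p_tL^q_x(\mathbb{R})}\lesssim (M/N)^{1/p}$. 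Combining with the atomic sum above gives the claim.

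The main obstacle I anticipate is the bookkeeping in the summation over the common refinement: one must be careful that the exponent $p$ coming out of the $(M/N)^{1/p}$ gain matches the $\ell^p$ summability built into the definition of a $U^p$ atom, so that the $L^p_t$-orthogonality of disjoint time intervals combines cleanly with $\sum_k\|\vec{u}_k\|^p_{L^2h^1}=1$ without losing the $M/N$ factor; this is exactly why the statement is phrased with $U^p_{\triangle}$ rather than a different exponent. A secondary technical point is justifying that truncating a free evolution to a subinterval does not destroy the $x$-Fourier support condition (it does not, since the cutoff is in $t$ only) and that Proposition \ref{pr-y5.4} may be applied on a subinterval — this is standard since the estimate there is global in time and the left side only decreases under restriction. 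Everything else is routine, and indeed this proposition is the discrete-Sobolev, system-valued analogue of the corresponding step in \cite{D} and \cite{YZ}, so I would present the atomic reduction in full and refer to those references for the remaining details.
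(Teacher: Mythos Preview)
Your proposal is correct and follows essentially the same atomic transference argument as the paper: reduce to a pair of $U^p_{\triangle}(h^1)$ atoms, use the $L^p_t$-orthogonality of the disjoint time pieces, and apply Proposition~\ref{pr-y5.4} on each piece. The paper streamlines your summation step by writing the double sum $\sum_{k,k'}\int_{[t_k,t_{k+1}]\cap[t_{k'},t_{k'+1}]}\big(\cdots\big)^p\,dt$ directly (rather than passing to a common refinement) and then bounding it by $\tfrac{M}{N}\sum_{k}\|\vec{u}_k\|_{L^2h^1}^p\sum_{k'}\|\vec{v}_{k'}\|_{L^2h^1}^p=\tfrac{M}{N}$, which avoids the slightly muddled appeal to $\sum_n\|\vec{u}^{(n)}\|^p\lesssim\sum_k\|\vec{u}_k\|^p$ in your sketch.
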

\begin{proof}
We just take $U^p_{\triangle}(h^{1})$ and $U^p_{\triangle}(h^{1})$ atoms into consideration .  Let
$$\vec{u}=\sum_k \chi_{[t_k,t_{k+1}]}e^{it\triangle}\vec{u}_{k}, \sum\limits_{k}\|\vec{u}_k(x)\|_{L_x^2h^{1}}^p=1; \vec{v}=\sum_{k'} \chi_{[t_{k'},t_{k'+1}]}e^{it\triangle}\vec{v}_{k'}, \sum\limits_{k'}\|\vec{v}_{k'}(x)\|_{L_x^2h^{1}}^p=1.$$
Without loss of generality, we may assume $\|\vec{u}\|_{U^p_{\triangle}(h^{1})}=1, \|\vec{v}\|_{U^p_{\triangle}(h^{1})}=1$.  It suffices to show
$$\left\|\|\vec{u}\|_{h^{1}}\cdot\|\vec{v}\|_{h^{1}}\right\|_{L^p_t L^q_x}\lesssim \big(\frac{M}{N}\big)^{\frac{1}{p}}.$$
By Minkowski inequality and Proposition \ref{pr-y5.4}, we have
\begin{align*}
  \left\|\|\vec{u}\|_{h^{1}}\cdot\|\vec{v}\|_{h^{1}}\right\|^p_{L^p_t L^q_x}=&\left\|\|\sum_k \chi_{[t_k,t_{k+1}]}e^{it\triangle}\vec{u}_{k}\|_{h^{1}}\cdot\|\sum_{k'} \chi_{[t_{k'},t_{k'+1}]}e^{it\triangle}\vec{v}_{k'}\|_{h^{1}}\right\|^p_{L^p_t L^q_x} \\
  \leq&\left\|(\sum_{k}\chi_{[t_k,t_{k+1}]}\|e^{it\triangle}\vec{u}_{k}\|_{h^{1}})\cdot(\sum_{k'} \chi_{[t_{k'},t_{k'+1}]}\|e^{it\triangle}\vec{v}_{k'}\|_{h^{1}})\right\|^p_{L^p_t L^q_x}\\
  \leq&\int_{\mathbb{R}}\left(\sum_{k,k'}\chi_{[t_k,t_{k+1}]}\chi_{[t_{k'},t_{k'+1}]} \left\|\|e^{it\triangle}\vec{u}_{k}\|_{h^{1}} \|e^{it\triangle}\vec{v}_{k'}\|_{h^{1}}\right\|_{L_x^q}\right)^p(t)dt\\
  \leq&\sum_k\sum_{k'}\int_{[t_{k'},t_{k'+1}]\cap[t_{k},t_{k+1}]} \left(\left\|\|e^{it\triangle}\vec{u}_{k}\|_{h^{1}} \|e^{it\triangle}\vec{v}_{k'}\|_{h^{1}}\right\|_{L_x^q}\right)^p(t)dt\\
  \leq&\frac{M}{N}\sum_{k}\sum_{k'}\|\vec{u}_{k}\|^p_{L^2_x h^{1}}\|\vec{v}_{k'}\|^p_{L^2_x h^{1}}\\
  =&\frac{M}{N}.
\end{align*}Therefore,
$$\left\|\|\vec{u}\|_{h^{1}}\cdot\|\vec{v}\|_{h^{1}}\right\|_{L^p_t L^q_x}\lesssim \big(\frac{M}{N}\big)^{\frac{1}{p}}.$$
\end{proof}

\begin{remark}
For $a,b\in\{-1,0,1\}$, $\frac{1}{p}+\frac{1}{q}=1$,  we still have
\begin{equation}
\left\|\|\vec{u}\|_{h^{a}}\cdot\|\vec{v}\|_{h^{b}}\right\|_{L^p_t L^q_x}\lesssim\big(\frac{M}{N}\big)^{\frac{1}{p}}\|\vec{u}\|_{U^p_{\triangle}(h^{a})} \|\vec{v}\|_{U^p_{\triangle}(h^{b})}.
\end{equation}
\end{remark}

\section{Estimates for the cubic resonances}
We recall the cubic resonant system as follows \eqref{maineq}.
\begin{equation}\label{eq-main1.1}
\aligned
&(i\partial_t+\Delta_{x})u_j=\sum_{(j_1,j_2,j_3)\in R(j)} u_{j_{1}}\bar{u}_{j_{2}}u_{j_{3}}, \\
&R(j)=\{(j_1,j_2,j_3) \in (\mathbb{Z}^2)^3:j_1-j_2+j_3=j \quad and \quad  |j_1|^2-|j_2|^2+|j_3|^2=|j|^2 \}
\endaligned
\end{equation}
with unknown $\vec{u}=\{u_j \}_{j \in \mathbb{Z}^2}$.\vspace{3mm}

It is important to understand the resonant nonlinearity when one studies the cubic resonant system. To be more precise, we want to estimate the resonant nonlinearity. We note again that the above system corresponds to the two dimensional case since $j \in \mathbb{Z}^2$. The 1D case (\cite{YZ}) and the 2D case are quite different.\vspace{3mm}

We consider the discrete nonlinearity $\vec{F}(u)=\{ \sum_{(j_1,j_2,j_3)\in R(j)} u_{j_{1}}\bar{u}_{j_{2}}u_{j_{3}} \}_{j}$ first, which means $u_j$ are numbers in $\mathbb{C}$. (Later for the scattering problem of \eqref{maineq}, we consider $u_j$ to be functions from $\mathbb{R}^2$ to $\mathbb{C}$.) The study of this discrete model may have its own interests.\vspace{3mm}

One question is: Do the following two estimates hold?
\begin{equation}
    \|\vec{F}(u)\|_{l^2} \lesssim \|\vec{u}\|^3_{l^2},
\end{equation}
and
\begin{equation}
        \|\vec{F}(u)\|_{h^1} \lesssim \|\vec{u}\|^2_{l^2}\|\vec{u}\|_{h^1}.
\end{equation}
For one dimensional case, the answer is `Yes'. They do hold since this case is very special. (See \cite{YZ}).

Now we present some controls regarding the nonlinearity. We will explain why the estimates do not hold and how to obtain some weaker estimates which are still useful for studying our model \eqref{maineq}.

We briefly recall some notations.
Let $a:=\{a_{j}\}_{j\in \mathbb{Z}^{2}}$, $a_{j}\in \mathbb{C}$, and let

\begin{equation}
\begin{aligned}
&\|a\|_{l^{2}}^{2}:=\sum_{j\in \mathbb{Z}^{2}}|a_{j}|^{2},\\
&\|a\|_{h^{s}}^{2}:=\sum_{j\in \mathbb{Z}^{2}}\langle j \rangle^{2s}|a_{j}|^{2},
\end{aligned}
\end{equation}
and let
\begin{equation}
\begin{aligned}
&F(a):=\{f_{j}\}_{j\in \mathbb{Z}^{2}},\\
&f_{j}:=\sum_{(j_1,j_2,j_3)\in R(j)} a_{j_{1}}\bar{a}_{j_{2}}a_{j_{3}}.
\end{aligned}
\end{equation}

And it would be favorable if one has
\begin{equation}\label{eq: fav}
\|F(a)\|_{l^{2}}\lesssim \|a\|_{l^{2}}^{3}.
\end{equation}

Estimate of type \eqref{eq: fav} has close connection to Strichartz estimates for Schr\"odinger equations on $\mathbb{T}^{2}$.  We do an explanation here for the convenience of the readers, and we point out similar computations frequently appear in the literature of random data. (See \cite{Deng1,Deng2} and the reference therein.)

Given $a$, let
\begin{equation}\label{eq: e11}
v(t,x):=v_{a}(t,x)=\sum_{j}a_{j}e^{ijx}e^{ij^{2}t}.	
\end{equation}
Then we have $v(t,x)$ solves
\begin{equation}\label{eq: e12}
iv_{t}+\Delta v=0,
v(0,x)=v_{0}=\sum_{j}a_{j}e^{ijx}.
\end{equation}
In particular
\begin{equation}\label{eq: e13}
\|a\|_{l^{2}}=\|v_{0}\|_{l^{2}}, \quad \|a\|_{h^{s}}=\|v_{0}\|_{H^{s}}.	
\end{equation}

Meanwhile,
\begin{equation}\label{eq: e14}
\|F(a)\|_{l^{2}}:=\sup_{\|b\|_{l^{2}}=1}\sum_{(j_{1},j_{2},j_{3})\in R(j)}\bar{b}_{j}a_{j_{1}}	\bar{a}_{j_{2}}a_{j_{3}},
\end{equation}

We similarly define, for every $\|b\|_{l^{2}}=1$
\begin{equation}\label{eq: e15}
g(t,x):=g_{b}(t,x)=\sum_{j}b_{j}e^{ijx}e^{ij^{2}t}.
\end{equation}
And we have
\begin{equation}\label{eq: e16}
	\sum_{(j_{1},j_{2},j_{3})\in R(j)}\bar{b}_{j}a_{j_{1}}	\bar{a}_{j_{2}}a_{j_{3}}=(2\pi)^{-3}\int_{\mathbb{T}^{2}\times [0,2\pi]}. |v|^{2}v\bar{g}dxdt
\end{equation}

From this perspective, (plugging $g=v$), estimate \eqref{eq: fav} is equivalent to Strichartz type estimate
\begin{equation}\label{eq: favstri}
	\|e^{it\Delta_{\mathbb{T}^{2}}}\phi\|_{L_{t,x}^{4}([0,2\pi]\times \mathbb{T}^2)}\lesssim \|f\|_{L_{x}^{2}}.
\end{equation}

To the best of our knowledge, Estimate \eqref{eq: favstri} is unknown, and it is probably wrong, given its 1d parallel version (replacing $L^{4}$ by $L^{6}$) is known to be wrong. (See \cite{Taobook}).

Meanwhile, one can also rely the above connection to derive estimates regarding $f(a)$, with the help of linear/multi linear Strichartz estimates on the torus.

One has
\begin{lemma}\label{es:resonant}
For all $\beta>0$,
\begin{equation}\label{eq: l2es}
\|f(a)\|_{l^{2}}\lesssim \|a\|_{l^{2}}^{2}\|a\|_{h^{\beta}},
\end{equation}
\begin{equation}\label{eq: h1es}	
\|f(a)\|_{h^{1}}\lesssim \|a\|_{h^{1}}\|a\|_{h^{\beta}}\|a\|_{l^{2}}.
\end{equation}
\end{lemma}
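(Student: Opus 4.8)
\textbf{Proof proposal for Lemma \ref{es:resonant}.}

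The plan is to exploit exactly the correspondence described just above the statement: identify a sequence $a=\{a_j\}_{j\in\mathbb{Z}^2}$ with the periodic function $v_a(t,x)=\sum_j a_j e^{ijx}e^{i|j|^2 t}$ on $\mathbb{T}^2\times[0,2\pi]$, which solves the free Schrödinger equation with data $v_0=\sum_j a_j e^{ijx}$, and recall that $\|a\|_{l^2}=\|v_0\|_{L^2}$, $\|a\|_{h^s}=\|v_0\|_{H^s}$. Under this dictionary the trilinear resonant sum $f(a)_j$ is the $j$-th Fourier--spacetime coefficient of $|v_a|^2 v_a$, so by \eqref{eq: e14}--\eqref{eq: e16} one has, for any $b$ with $\|b\|_{l^2}=1$,
\begin{equation*}
\Big|\sum_j \bar b_j f(a)_j\Big| = (2\pi)^{-3}\Big|\int_{\mathbb{T}^2\times[0,2\pi]} |v_a|^2 v_a \,\bar g_b\,dx\,dt\Big| \le \|v_a\|_{L^4_{t,x}}^3\,\|g_b\|_{L^4_{t,x}},
\end{equation*}
by Hölder in $(t,x)$. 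For \eqref{eq: h1es} I would first note that $h^1$-type bounds on $f(a)$ translate, via $\langle j\rangle \lesssim \langle j_1\rangle+\langle j_2\rangle+\langle j_3\rangle$ on the resonant set, into the same spacetime integral but with one of the three factors of $v_a$ (resp.\ of $g_b$, when the derivative lands on that slot after taking the dual) replaced by $\langle\nabla\rangle v_a$; equivalently $\|f(a)\|_{h^1}\lesssim \|\langle\nabla\rangle v_a\|_{L^4_{t,x}}\,\|v_a\|_{L^4_{t,x}}^2$ (and the dual factor $g_b$ stays at $L^4$, $\|b\|_{l^2}=1$).

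So everything reduces to a single periodic Strichartz-type estimate: control $\|v_a\|_{L^4_{t,x}([0,2\pi]\times\mathbb{T}^2)}$ in terms of $\|v_0\|_{L^2_x}$ and $\|v_0\|_{H^\beta_x}$ for arbitrarily small $\beta>0$. Here is where the $\beta$-loss is unavoidable and where the sharp endpoint $L^4_{t,x}\lesssim L^2_x$ fails (cf.\ the discussion around \eqref{eq: favstri}); the standard substitute is the Bourgain--type periodic Strichartz estimate $\|e^{it\Delta_{\mathbb{T}^2}}\phi\|_{L^4_{t,x}([0,2\pi]\times\mathbb{T}^2)}\lesssim_\epsilon \|\phi\|_{H^\epsilon(\mathbb{T}^2)}$ for every $\epsilon>0$, which follows from the divisor-bound estimate on the number of lattice points on circles (equivalently, the $l^4$ counting estimate for the number of representations of an integer as a sum of two squares). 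Taking $\epsilon=\beta$ and applying this to $v_a$, $\langle\nabla\rangle v_a$, and $g_b$ gives
\begin{equation*}
\|v_a\|_{L^4_{t,x}}\lesssim \|v_0\|_{H^\beta}=\|a\|_{h^\beta},\qquad \|\langle\nabla\rangle v_a\|_{L^4_{t,x}}\lesssim \|\langle\nabla\rangle v_0\|_{H^\beta}\lesssim \|v_0\|_{H^{1+\beta}}.
\end{equation*}
Plugging these in yields $\|f(a)\|_{l^2}\lesssim \|a\|_{h^\beta}^3$ and $\|f(a)\|_{h^1}\lesssim \|a\|_{h^{1+\beta}}\|a\|_{h^\beta}^2$. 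To get the stated estimates — which have a cleaner split, with only one $h^\beta$ and the rest in $l^2$ or $h^1$ — I would interpolate: one of the three $L^4$ norms can be kept at $H^\beta$ while the other two are estimated more crudely by, e.g., Bernstein/Sobolev in the $x$-variable combined with a cheap-in-time bound, or simply by redoing the trilinear sum putting the "worst" index (the one carrying $\langle j\rangle^s$, $s\in\{0,1\}$) in the $h^\beta$ slot only when it is actually the largest, and otherwise absorbing its high frequency into one of the genuinely high-frequency factors. Concretely, a Littlewood--Paley decomposition of the three input factors, summing the dyadic pieces, lets the two non-extremal factors be measured in $l^2$ (resp.\ $h^1$) while only the largest-frequency factor pays the $h^\beta$ price, giving exactly \eqref{eq: l2es} and \eqref{eq: h1es}.

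The main obstacle is the endpoint failure: there is no $\beta=0$ version, so the argument must be organized so that the $\beta$-loss attaches to only \emph{one} factor and does not compound, and one must make sure the extra $\langle\nabla\rangle$ in the $h^1$ estimate and the $\langle j\rangle^\beta$ loss land on the same (highest-frequency) factor so that the product is genuinely $\|a\|_{h^1}\|a\|_{h^\beta}\|a\|_{l^2}$ rather than $\|a\|_{h^{1+\beta}}\|a\|_{h^\beta}^2$. This is the only delicate bookkeeping point; everything else (the $\mathbb{T}^2$-to-lattice dictionary, Hölder, and the $L^4$ periodic Strichartz estimate with $\epsilon$-loss) is standard. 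An alternative route that avoids invoking the periodic Strichartz estimate as a black box — writing out the $L^4$ norm via the divisor bound directly on the resonant sum — is presumably what the Appendix refers to; I would keep the Strichartz-based proof in the main text for brevity and point to the Appendix for the self-contained counting argument.
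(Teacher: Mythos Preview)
Your reduction to the spacetime integral $\int_{[0,2\pi]\times\mathbb{T}^2}|v_a|^2v_a\,\bar g_b\,dx\,dt$ and the use of the periodic $L^4$ Strichartz estimate with $\epsilon$-loss match the paper's framework. The gap is in the sharpening step. First, a small point: your H\"older bound already costs $\|g_b\|_{L^4}\lesssim\|b\|_{h^\epsilon}$ rather than $\|b\|_{l^2}=1$, so even the preliminary estimate $\|f(a)\|_{l^2}\lesssim\|a\|_{h^\beta}^3$ is not justified as written. More importantly, the linear $L^4$ estimate alone cannot produce the stated split: every dyadic piece carries its own $N^\epsilon$ loss, and no Littlewood--Paley rearrangement concentrates all of them onto a single input.

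The tool the paper actually invokes (explicitly for \eqref{eq: h1es}) is the \emph{bilinear} periodic Strichartz estimate
\[
\|P_{N_1}e^{it\Delta_{\mathbb{T}^2}}\phi\cdot P_{N_2}e^{it\Delta_{\mathbb{T}^2}}\psi\|_{L^2_{t,x,\mathrm{loc}}}\lesssim_\epsilon N_2^\epsilon\,\|\phi\|_{L^2}\|\psi\|_{L^2},\qquad N_1\ge N_2,
\]
which places the $\epsilon$-loss on the \emph{low}-frequency factor. After decomposing the four factors dyadically and pairing each high-frequency factor with a low-frequency one, the losses fall only on the low-frequency inputs; in particular $g_b$ can be paired with the top-frequency $v$-factor at no cost to $\|b\|_{l^2}$, and for \eqref{eq: h1es} the weight $\langle\nabla\rangle$ --- which by the resonant relation $|j|^2+|j_2|^2=|j_1|^2+|j_3|^2$ sits on a high-frequency input --- and the $h^\beta$ loss automatically land on \emph{different} factors. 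Your final bookkeeping claim is therefore backwards: you propose forcing the $\langle\nabla\rangle$ and the $\langle j\rangle^\beta$ loss onto the same highest-frequency factor, but that would yield $\|a\|_{h^{1+\beta}}\|a\|_{l^2}^2$; the mechanism that gives $\|a\|_{h^1}\|a\|_{h^\beta}\|a\|_{l^2}$ is precisely that they separate.
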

\begin{proof}
Let $e^{it\Delta_{\mathbb{T}_{2}}}$ be the linear Schr\"odinger propagator on $\mathbb{T}^{2}$, and $P_{N}$ be Littlewood Paley projection at frequency $\sim N$, Let $N_{1}\geq N_{2}$ be dyadic integers.Let $v_{0}$be in $L_{x}^{2}(\mathbb{T})$.

\begin{equation}\label{eq: l2strichartz}
\|e^{it\Delta}P_{N}v_{0}\|_{L_{t,x,loc}^{4}}\lesssim_{\epsilon}N^{\beta}\|v_{0}\|_{2}
\end{equation}
and

\begin{equation}\label{eq: h1strichartz}
\|P_{N_{1}}e^{it\Delta_{\mathbb{T}^{2}}}v_{0}P_{N_{2}}e^{it\Delta_{\mathbb{T}^{2}}}v_{0}\|_{L_{t,x,loc}^{2}}\lesssim_{\epsilon}N_{2}^{\epsilon}\|v_{0}\|_{2}^{2}.
\end{equation}	
By the connection explained in \eqref{eq: e11}-\eqref{eq: favstri},
estimate \eqref{eq: l2es} follows from Strichartz type estimates \eqref{eq: l2strichartz}, and estimate \eqref{eq: h1es} follows from Strichartz type estimates, \eqref{eq: h1strichartz}.
\end{proof}

\begin{remark}
We include another proof for the above estimates with a certain range of $\beta>0$ in the appendix following the strategy in \cite{HP,YZ,Z1} (using number theory).
\end{remark}

\emph{Reduction of the scattering norm. } It is known that for \eqref{maineq}, scattering in $L^2_xh^1$ space is equivalent to the finiteness of $L^4_{t,x}h^1$ norm. We argue that it suffices to show the finiteness of $L^4_{t,x}l^2$ norm, which is weaker. That's good since we can use some symmetric properties. Also, naturally, a smaller quantity is more likely to be finite. This follows from \eqref{eq: h1es} and the trick of interval divisions.

Assuming $\|\vec{u}\|_{L^4_{t,x}l^2}<\infty$, Strichartz estimate gives
\begin{equation}
\aligned
  \|\vec{u}\|_{L^4_{t,x}h^1} &\lesssim \|\vec{u}_0\|_{L^2h^1}+\|\vec{F}(u)\|_{L_{t,x}^{\frac{4}{3}}h^1} \\
  &\lesssim \|\vec{u}_0\|_{L^2h^1}+\big\| \|\vec{u}\|^{\delta_2}_{l^2}\|\vec{u}\|^{3-\delta_2}_{h^1} \big\|_{L_{t,x}^{\frac{4}{3}}}\\
  &\lesssim \|\vec{u}_0\|_{L^2h^1}+ \|\vec{u}\|^{\delta_2}_{L^4_{t,x}l^2}\|\vec{u}\|^{3-\delta_2}_{L^4_{t,x}h^1}.
  \endaligned
\end{equation}
Continuity arguments give us the conclusion.\vspace{3mm}

\begin{remark}
If on a time interval $I$, we have the control for $L^4_{t,x}l^2$, then we can control $L^4_{t,x}h^1$ as well according to the nonlinear estimate. So on small intervals, we can reduce $h^1$ to $L^2$.

However, this observation is not enough to reduce everything to $l^2$ because of the failure of $l^2$-estimate, thus we still need to deal with the case with regularity as shown in the following sections.
\end{remark}

\section{Some useful observations and preparations}
\subsection{Some useful observations}
We discuss some observations on the symmetric property of the cubic resonances. For the 1D case, there is no regularity, i.e. everything is in $l^2$ norm, which gives us good symmetric property. For example, it is easy to show the following equality noticing the symmetry,
\begin{equation}
    \Im ( \sum_{(j,j_1,j_2,j_3) \in \mathcal{R}}u_j \bar{u}_{j_1}u_{j_2}\bar{u}_{j_3} )=0.
\end{equation}
We observe that if we consider $h^1$ regularity, we still have some symmetric property to `save' the 2D model \eqref{maineq} though it is not as good as the no-regularity case. For $h^{\epsilon}$ regularity case, there is no symmetry. Thus we will use $h^1$ regularity though $h^{\epsilon}$ regularity is smaller and closer to $l^2$. We state the observations as follows.

\textbf{1. First observation.}

\begin{align}
&\Im\left(\sum\limits_{j}\langle j\rangle^{2} u_{j} \sum\limits_{\mathcal{R}_{j}} \bar{u}_{j_{1}} u_{j_{2}} \bar{u}_{j_{3}}\right) \\
&=\Im\left(\sum\limits_{\left(j, j_{1}, j_{2}, j_{3}\right) \in \mathcal{R}}\langle j\rangle^{2} u_{j} \bar{u}_{j_{1}} u_{j_{2}} \bar{u}_{j_{3}}\right) \\
&=\Im\left(\sum\limits_{\left(j, j_{1}, j_{2}, j_{3}\right) \in \mathcal{R}}|j|^{2} u_{j} \bar{u}_{j_{1}} u_{j_{2}} \bar{u}_{j_{3}}\right) \\
&=\frac{1}{2} \Im\left( \sum\limits_{\left(j, j_{1}, j_{2}, j_{3}\right) \in \mathcal{R}}\left(|j|^{2}+\left|j_{2}\right|^{2}\right) u_{j} \bar{u}_{j_{1}} u_{j_{2}} \bar{u}_{j_{3}}\right) \\
&=\frac{1}{4} \Im\left( \sum\limits_{\left(j, j_{1}, j_{2}, j_{3}\right) \in \mathcal{R}}\left(|j|^{2}+\left|j_{1}\right|^{2}+\left|j_{2}\right|^{2}+\left|j_{3}\right|^{2}\right) u_{j} \bar{u}_{j_{1}} u_{j_{2}} \bar{u}_{j_{3}}\right) =0,
\end{align}
where
\begin{equation}
 \mathcal{R}_j=\left\{\left(j_{1}, j_{2}, j_{3}\right) \in\left(\mathbb{Z}^{2}\right)^{3}: j_{1}-j_{2}+j_{3}=j \text { and }\left|j_{1}\right|^{2}-\left|j_{2}\right|^{2}+\left|j_{3}\right|^{2}=|j|^{2}\right\},
\end{equation}
and
\begin{equation}
\mathcal{R}=\left\{\left(j,j_{1}, j_{2}, j_{3}\right) \in\left(\mathbb{Z}^{2}\right)^{4}: j_{1}-j_{2}+j_{3}=j \text { and }\left|j_{1}\right|^{2}-\left|j_{2}\right|^{2}+\left|j_{3}\right|^{2}=|j|^{2}\right\}.
\end{equation}

\textbf{2. Second observation.}
\begin{equation*}
\begin{split}
  & \Im\Big(\sum\limits_{\left(j, j_{1}, j_{2}, j_{3}\right) \in \mathcal{R}}\langle j\rangle^{2}[ (P_{\xi(t),\leq l_2}\bar{u}^h_{j})  u^l_{j_{1}} \bar{u}^l_{j_{2}} u^l_{j_{3}}+\bar{u}^l_{j}  (P_{\xi(t),\leq l_2}u^h_{j_{1}}) \bar{u}^l_{j_{2}} u^l_{j_{3}}\\
  &+\bar{u}^l_{j}  u^l_{j_{1}} (P_{\xi(t),\leq l_2}\bar{u}^h_{j_{2}}) u^l_{j_{3}}+\bar{u}^l_{j}  u^l_{j_{1}} \bar{u}^l_{j_{2}} (P_{\xi(t),\leq l_2}u^h_{j_{3}})]\Big) \\
  =&\Im\Big(\sum\limits_{\left(j, j_{1}, j_{2}, j_{3}\right) \in \mathcal{R}}(\langle j\rangle^{2}+\langle j_2\rangle^{2})[ (P_{\xi(t),\leq l_2}\bar{u}^h_{j})  u^l_{j_{1}} \bar{u}^l_{j_{2}} u^l_{j_{3}}+\bar{u}^l_{j}  (P_{\xi(t),\leq l_2}u^h_{j_{1}}) \bar{u}^l_{j_{2}} u^l_{j_{3}}]\Big)\\
  =&0. \\
\end{split}
\end{equation*}
This cancellation relation will be used in the estimate of the nonlinear part ``$F_{1,j}$" as to (6.27) in \cite{YZ}, when we replace ``$l^2$" summation of ``$h^1$" summation.

\textbf{3.Third observation.}
\begin{equation}
\begin{split}
  & \Im\Big(\sum\limits_{\left(j, j_{1}, j_{2}, j_{3}\right) \in \mathcal{R}}\langle j\rangle^{2}[ (P_{\xi(t),\leq l_2}\bar{u}^h_{j})  u^l_{j_{1}} \bar{u}^l_{j_{2}} (P_{\xi(t),\leq l_2}u^h_{j_{3}})+\bar{u}^l_{j}  (P_{\xi(t),\leq l_2}u^h_{j_{1}}) (P_{\xi(t),\leq l_2}\bar{u}^h_{j_{2}}) u^l_{j_{3}}]\Big) \\
  =&\Im\Big(\sum\limits_{\left(j, j_{1}, j_{2}, j_{3}\right) \in \mathcal{R}}(\langle j\rangle^{2}+\langle j_2\rangle^{2})[ (P_{\xi(t),\leq l_2}\bar{u}^h_{j})  u^l_{j_{1}} \bar{u}^l_{j_{2}} (P_{\xi(t),\leq l_2}u^h_{j_{3}})]\Big)\\
  =&0. \\
\end{split}
\end{equation}

\subsection{Some preparations for the Long time Strichartz estimate.}
In the next section, we will use the above observations to give the crucial estimate: Long time Strichartz estimate. Before that, we will first do some preparations. 

Let $\phi \in C^{\infty}_0(\mathbb{R}^2)$ be a radial, decreasing function
\begin{equation*}\phi(x):=
\left\{
  \begin{array}{ll}
    1, & \hbox{$|x|\leq 1$,} \\
    0, & \hbox{$|x|>2$.}
  \end{array}
\right.
\end{equation*}
Define the partition of unity
\begin{equation*}
1=\phi(x)+\sum_{j=1}^{\infty}[\phi(2^{-j}x)-\phi(2^{-j+1}x)]=:\psi_0(x)+\sum_{j=1}^{\infty}\psi_j(x).
\end{equation*}
For any integer $j\geq 0$, let
\begin{equation*}
  P_{j}f= \mathcal{F}^{-1}(\psi_j(\xi)\hat{f}(\xi))=\int K_j(x-y)f(y)dy,
\end{equation*}
where $K_j$ is an $L^1$ -kernel. When $j$ is an integer less than zero, let $P_j f = 0$. Finally let
\begin{equation*}
  P_{j_1\leq\cdot\leq j_2}f=\sum_{j_1\leq\j\leq j_2}P_j f.
\end{equation*}
We also define the frequency truncation
\begin{equation*}
  P_{\leq j}f= \mathcal{F}^{-1}(\phi(2^{-j}\xi)\hat{f}(\xi)).
\end{equation*}
Let $\xi_0\in \mathbb{R}^2$, then define $P_{\xi_0,j}u=e^{ix\cdot\xi_0}P_j(e^{-ix\cdot\xi_0})u$ and $P_{\xi_0,j}\vec{u}=\{P_{\xi_0,j}u_k\}_{k\in\mathbb{Z}^2}$. Similarly we can define $P_{\xi_0,\leq j}\vec{u}$ and $P_{\xi_0,\geq j}\vec{u}$. Finally we point out  $P_{\xi_0,j}\vec{\bar{u}}=\{\overline{P_{\xi_0,j}u_k}\}_{k\in\mathbb{Z}^2}$.

Record $\mathbf{F}(\vec{u}):=\vec{F}(\vec{u}):=\{F_j(\vec{u})\}_{j\in\mathbb{Z}^2}:=
\{\sum\limits_{\mathcal{R}(j)}u_{j_1}\bar{u}_{j_2}u_{j_3}\}_{j\in\mathbb{Z}^2}=:\sum^{\rightarrow}\limits_{\mathcal{R}(j)}u_{j_1}\bar{u}_{j_2}u_{j_3}$,
and  $u^l_{\sharp}:=P_{\xi(t),\leq l_2-5}u_{\sharp}$, $u^{h}_{\sharp}=u_{\sharp}- u^l_{\sharp}$, for $\sharp\in\{j,j_1,j_2,j_3,j',\cdots\}$.

We let $G$ be the  group  generated by phase rotations, Galilean transforms, translations and dilations. We let $G\backslash H^{1}h^{1}(\mathbb{R}^2\times\mathbb{Z}^2)$ be the modulo space of $G$-orbits $G\vec{f}:=\{g\vec{f}:g\in G\}$, $g_{\theta,\xi_0,x_0,\lambda}\vec{f}(x):=\frac{1}{\lambda}e^{i\theta}e^{ix\xi_0}\vec{f}(\frac{x-x_0}{\lambda})$ endowed with the usual quotient topology. Define
$$(T_{g_{\theta,\xi_0,x_0,\lambda}}\vec{u})(t,x):=\frac{1}{\lambda}e^{i\theta}e^{ix\cdot\xi_0}e^{-it|\xi_0|^2}
\vec{u}\left(\frac{t}{\lambda^2},\frac{x-x_0-2\xi_0t}{\lambda}\right),$$
 then the map $g\mapsto T_g$ is a group action of $G$.

 Following the argument in the proof of  Proposition 3.1 in \cite{CHENG}, we can similarly have
 \begin{proposition}[Linear profile decomposition in $L_x^2 h^1(\mathbb{R}^2 \times \mathbb{Z}^2)$]  \label{pr6.640}
Let $\{\vec{u}_{n}\}$ be a bounded sequence in $L_x^2 h^1(\mathbb{R}^2 \times \mathbb{Z}^2)$. Then (after passing to a subsequence if necessary) there
exists $J\in  \{0,1, \cdots \} \cup \{\infty\}$, functions $\{\vec{\phi}^{j}\}_{j=1}^{J} \subseteq L_x^2 h^1$, group elements $\{g_n^j\}_{j=1}^{J} \subseteq G$, $g_n^j:=g_{\theta^{j}_{n},\xi_{n}^{j},x_{n}^j,\lambda_{n}^j}$,
and times $\{t_n^j\}_{j=1}^{J} \subseteq \mathbb{R}$ so that defining $\vec{w}_{n}^J$ by
\begin{align*}
\vec{u}_n(x) = &  \sum_{j=1}^J g_n^j e^{it_n^j \Delta_{\mathbb{R}^2}} \vec{\phi}^j + \vec{w}_n^J(x) \\
         := & \sum_{j=1}^J \frac1{\lambda_n^j} e^{ix\xi_n^j} (e^{it_n^j \Delta_{\mathbb{R}^2} } \vec{\phi}^j)\left(\frac{x-x_n^j}{\lambda_n^j} \right) + \vec{w}_n^J(x ),
\end{align*}
we have the following properties:
\begin{align*}
\limsup_{n\to \infty} \|e^{it\Delta_{\mathbb{R}^2}} \vec{w}_n^J\|_{L_{t,x}^4 l^4(\mathbb{R} \times \mathbb{R}^2 \times \mathbb{Z}^2 )} \to 0, \ \text{ as } J\to \infty, \\
e^{-it_n^j \Delta_{\mathbb{R}^2}} (g_n^j)^{-1} \vec{w}_n^J \rightharpoonup 0  \text{ in } L_x^2 h^1,  \text{ as  } n\to \infty,\text{ for each }  j\le J,\\
\sup_{J} \lim_{n\to \infty} \left(\|\vec{u}_n\|_{L_x^2 h^1}^2 - \sum_{j=1}^J \|\vec{\phi}^j\|_{L_x^2 h^1}^2 - \|\vec{w}_n^J\|_{L_x^2 h^1}^2\right) = 0,
\end{align*}
and lastly, for $j\ne j'$, and $n\to \infty$,
\begin{align*}
\frac{\lambda_n^j}{\lambda_n^{j'}} + \frac{\lambda_n^{j'}}{\lambda_n^j} + \lambda_n^j \lambda_n^{j'} |\xi_n^j - \xi_n^{j'}|^2 + \frac{|x_n^j-x_n^{j'}|^2 } {\lambda_n^j \lambda_n^{j'}}  + \frac{|(\lambda_n^j)^2 t_n^j -(\lambda_n^{j'})^2 t_n^{j'}|}{\lambda_n^j \lambda_n^{j'}} \to \infty.
\end{align*}%
\end{proposition}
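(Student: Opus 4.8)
\textbf{Proof proposal for Proposition \ref{pr6.640}.}

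The plan is to transplant the standard Euclidean linear profile decomposition machinery (à la Keraani, Bégout-Vargas, and Dodson \cite{D}) to the vector-valued setting, treating the discrete variable $j\in\mathbb{Z}^2$ as a harmless spectator once the right norms are fixed. The key point is that the only symmetries acting on the physical variable $x$ are the ones in $G$ (phase, Galilean, translation, dilation); the discrete weight $\langle j\rangle^{2}$ in the $h^1$ norm is untouched by every $g\in G$, so the orthogonality and energy-decoupling statements are proved componentwise in $j$ and then summed against $\langle j\rangle^{2}$. Concretely, I would first fix the ``inverse Strichartz'' or ``refined Sobolev/Stein-Tomas'' estimate at the level of the scalar mass-critical 2D propagator: if $\|\vec v\|_{L_x^2 h^1}\le A$ and $\liminf_n\|e^{it\Delta_{\mathbb{R}^2}}\vec v_n\|_{L^4_{t,x}l^4}\ge\varepsilon$, then there exist $g_n\in G$ and $t_n\in\mathbb{R}$ and a nonzero $\vec\phi\in L_x^2 h^1$ with $e^{-it_n\Delta}(g_n)^{-1}\vec v_n\rightharpoonup\vec\phi$ weakly in $L_x^2 h^1$, together with the quantitative lower bound $\|\vec\phi\|_{L_x^2 h^1}\gtrsim\varepsilon(\varepsilon/A)^{\kappa}$ for some $\kappa>0$. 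This is exactly Proposition 3.1 of \cite{CHENG} in spirit; since the excerpt explicitly invokes that argument, I would simply cite it and indicate that the $h^1$ weight passes through the Littlewood-Paley/real-interpolation steps because $\langle j\rangle^{2}$ is a fixed positive weight independent of all the physical symmetry parameters.

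The second step is the standard induction on the defect of compactness. Set $\vec w_n^0:=\vec u_n$. Given $\vec w_n^{J-1}$, let $\varepsilon_J:=\lim_n\|e^{it\Delta}\vec w_n^{J-1}\|_{L^4_{t,x}l^4}$ (passing to a subsequence so the limit exists). If $\varepsilon_J=0$ we stop and set $J$ to be the terminal index; otherwise apply the inverse Strichartz estimate to extract $(g_n^J,t_n^J,\vec\phi^J)$ with $\|\vec\phi^J\|_{L_x^2 h^1}\gtrsim\varepsilon_J(\varepsilon_J/A)^{\kappa}$, and define $\vec w_n^J:=\vec w_n^{J-1}-g_n^J e^{it_n^J\Delta}\vec\phi^J$. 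The weak-convergence property $e^{-it_n^j\Delta}(g_n^j)^{-1}\vec w_n^J\rightharpoonup 0$ for $j\le J$ follows by the usual argument: it holds for $j=J$ by construction of $\vec\phi^J$, and for $j<J$ one uses the asymptotic orthogonality of the parameters together with the fact (provable as in \cite{D}, again componentwise in the discrete variable) that $e^{-it_n^j\Delta}(g_n^j)^{-1}g_n^{j'}e^{it_n^{j'}\Delta}\rightharpoonup 0$ as an operator when the parameter tuples diverge in the stated sense. The Pythagorean energy expansion
\begin{equation*}
\|\vec u_n\|_{L_x^2 h^1}^2=\sum_{j=1}^J\|\vec\phi^j\|_{L_x^2 h^1}^2+\|\vec w_n^J\|_{L_x^2 h^1}^2+o_n(1)
\end{equation*}
is obtained by expanding the square, using unitarity of $g_n^j e^{it_n^j\Delta}$ on $L_x^2 h^1$ for the diagonal terms and the weak convergence plus parameter orthogonality for the cross terms; summing the $\langle j\rangle^{2}$-weighted pieces is legitimate because each scalar cross term vanishes in the limit. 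From the energy expansion, $\sum_j\|\vec\phi^j\|_{L_x^2 h^1}^2\le A^2$, hence $\|\vec\phi^j\|_{L_x^2 h^1}\to 0$, hence (by the quantitative lower bound) $\varepsilon_j\to 0$, which forces $\limsup_n\|e^{it\Delta}\vec w_n^J\|_{L^4_{t,x}l^4}\to 0$ as $J\to\infty$ — this is the first displayed conclusion. The parameter-divergence statement for $j\ne j'$ is built into the extraction step (one always chooses the new parameters to be asymptotically orthogonal to the previously extracted ones, which is possible precisely because otherwise $\vec\phi^j$ could be absorbed into an earlier profile, contradicting the weak-convergence property).

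The main obstacle I anticipate is not any single step but making sure the inverse Strichartz inequality genuinely holds with the $L^4_{t,x}l^4$ Strichartz norm on the left and the $L_x^2 h^1$ norm on the right, i.e. that the bilinear/refined linear estimate underlying the profile extraction tolerates the discrete Sobolev weight. Here one uses that $h^1$ and $l^2$ differ only by the fixed multiplier $\langle j\rangle$, which commutes with $e^{it\Delta_{\mathbb{R}^2}}$ and with every $g\in G$; so applying $\langle j\rangle$ to everything converts the $L_x^2 h^1$ problem into the scalar (well, $l^2$-vector-valued) $L_x^2 l^2$ problem, where the decomposition is the classical one of \cite{D} run with values in the Hilbert space $l^2$. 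Hilbert-space-valued versions of the refined Sobolev and inverse Strichartz estimates hold verbatim (the proofs only use orthogonality and the scalar Strichartz inequality applied slice-by-slice, then Minkowski), so once this reduction is spelled out the rest is bookkeeping. I would therefore present the proof as: (i) reduce from $h^1$ to $l^2$ by the commuting multiplier $\langle j\rangle$; (ii) quote the $l^2$-valued profile decomposition from \cite{CHENG} and \cite{D}; (iii) translate the conclusions back, noting that $G$ acts identically on all discrete components so every orthogonality statement survives the translation.
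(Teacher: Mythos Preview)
Your proposal is essentially correct and matches the paper's approach, which simply cites Proposition~3.1 of \cite{CHENG} without further detail; you have supplied the standard skeleton that the paper omits.

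One technical caveat on your closing summary (i)--(iii): the reduction via the $\langle j\rangle$ multiplier to a ``pure $L^2_x l^2$-valued'' profile decomposition does not work as literally stated. A genuine $L^2_x l^2$ decomposition using only the symmetries in $G$ (which act on $x$ alone, never on $j$) fails: take $f_{n,j}=\delta_{j,n}\,\phi(x)$ for a fixed nonzero $\phi\in L^2(\mathbb{R}^2)$. This sequence is bounded in $L^2_x l^2$ and has $\|e^{it\Delta}\vec f_n\|_{L^4_{t,x}l^4}=\|e^{it\Delta}\phi\|_{L^4_{t,x}}>0$ for all $n$, yet $e^{-it_0\Delta}g^{-1}\vec f_n\rightharpoonup 0$ in $L^2_x l^2$ for every $(g,t_0)$, so no profile can be extracted. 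The $h^1$ weight is precisely what excludes this escape to $|j|\to\infty$: from $\sum_j\langle j\rangle^2\|f_{n,j}\|_{L^2_x}^2\le A^2$ one localizes the nontrivial Strichartz contribution to a finite box $|j|\le R(A,\varepsilon)$, picks a fixed $j^*$ along a subsequence, and then runs the scalar inverse Strichartz on that component---exactly as you describe earlier in your sketch. So the weight must be used \emph{inside} the inverse-Strichartz step, not discarded before it. With that adjustment your argument is the one in \cite{CHENG}.
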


\begin{remark}\label{re-y4.2}
By using interpolation, the H\"older inequality and Proposition \ref{pr6.640}, for $0<\epsilon_0\leq 1$,  we have
\begin{align*}
  \limsup_{n\to \infty} \|e^{it\Delta_{\mathbb{R}^2}} \vec{w}_n^J\|_{L_{t,x}^4 h^{1-\epsilon_0}}
\lesssim &  \limsup_{n\to \infty} \|e^{it\Delta_{\mathbb{R}^2}} \vec{w}_n^J\|_{L_{t,x}^4 h^{1 }}^{1-\epsilon_0} \|e^{it\Delta_{\mathbb{R}^2}} \vec{w}_n^J\|_{L_{t,x}^4 l^2}^{\epsilon_0} \\
\lesssim  & \limsup_{n\to \infty} \|\vec{w}_n^J\|_{L_x^2 h^1}^{1-\epsilon_0} \|e^{it\Delta_{\mathbb{R}^2}} \vec{w}_n^J\|_{L_{t,x}^4 l^{\frac{3}{2}}} ^{\frac{3 \epsilon_0}{5}}  \|e^{it\Delta_{\mathbb{R}^2}} \vec{w}_n^J\|_{L_{t,x}^4 l^4}^{\frac{2\epsilon_0}{5}} \\
\lesssim & \limsup_{n\to \infty} \|\vec{w}_n^J\|_{L_x^2 h^1}^{1-\epsilon_0} \|e^{it\Delta_{\mathbb{R}^2}} \vec{w}_n^J\|_{L_{t,x}^4 h^1} ^{\frac{3 \epsilon_0}{5}}  \|e^{it\Delta_{\mathbb{R}^2}} \vec{w}_n^J\|_{L_{t,x}^4 l^4}^{\frac{2\epsilon_0}{5}}\\
\lesssim & \limsup_{n\to \infty} \|\vec{w}_n^J\|_{L_x^2 h^1}^{1-\frac25 \epsilon_0}\|e^{it\Delta_{\mathbb{R}^2}} \vec{w}_n^J\|_{L_{t,x}^4 l^4}^{\frac{2\epsilon_0}{5}}
  \to 0, \text{ as }  J\to \infty.
\end{align*}

\end{remark}
To prove the scattering, recalling Section 3, it suffices to prove that, for $\vec{u}$ solving \eqref{maineq} with initial data $\vec{u}_0$,
$$\|\vec{u}\|_{L_{t,x}^{4}l^2(\mathbb{R}\times\mathbb{R}^{2}\times\mathbb{Z}^2)}<\infty,$$ for all $\vec{u}_{0}\in L^2h^1(\mathbb{R}^2\times\mathbb{Z}^2).$

Then for $\vec{u}$ solving \eqref{maineq} with maximal lifespan interval $I$, we define the function
$$A(m)=\sup\{\|\vec{u}\|_{L_{t,x}^{4}l^2(I\times\mathbb{R}^{2}\times\mathbb{Z}^2)}: \|\vec{u}(0)\|^2_{L_x^2 h^1(\mathbb{R}^2 \times\mathbb{Z}^2)}\leq m\},$$ and $$m_{0}=\sup\{m: A(m')<+\infty, \forall m'<m\}.$$
If we can prove $m_{0}=+\infty$, then global well-posedness and scattering are established.

With Proposition \ref{pr6.640}, the definition of the critical mass $m_0$ above, following the argument in section 5 of \cite{TVZ1}, we have the following Palais-Smale condition modulo G property, which asserts a certain compactness modulo G in blowup sequences of solutions with mass less than or equal to the critical mass. It connects the low-level scattering norm $L_{t,x}^{4}l^2(I\times\mathbb{R}^{2}\times\mathbb{Z}^2)$  and the compactness characterization of high-level space $H^{1}h^{1}(\mathbb{R}^2\times\mathbb{Z}^2)$ at discrete direction of ``j" together. We will give a sketch of the proof below.
\begin{proposition}\label{pr-y4.4}
Let $u_{n,p}(t,x)$ be defined on time interval $I_{n}.$
Assume $m_{0}<+\infty$,  $\vec{u}_{n}:=\left\{u_{n,p}\right\}_{p\in\mathbb{Z}^2}, (n=1,2,\cdot\cdot\cdot)$ is a sequence of solutions to \eqref{maineq} satisfying $$\limsup\limits_{n\rightarrow\infty}M\left(\vec{u}_{n}\right)=m_{0},$$ $$\lim\limits_{n\rightarrow\infty}\left\|\vec{u}_{n}\right\|_{L_{t,x}^{4}l^2(I_{n}\cap(-\infty,t_{n}))}=
\lim\limits_{n\rightarrow\infty}\left\|\vec{u}_{n}\right\|_{L_{t,x}^{4}l^2(I_{n}\cap(t_{n},+\infty))}=+\infty, \,\,for\,\, some \,\,t_{n}\in I_{n}.$$
Then $G\vec{u}_{n}(t_{n})$ converges (up to subsequence) in $G\backslash H^{1}h^{1}(\mathbb{R}^2\times\mathbb{Z}^2).$
\end{proposition}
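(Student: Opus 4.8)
The plan is to run the standard concentration--compactness argument for mass--critical problems (in the form used in \cite{TVZ1,D,YZ}), with the weak nonlinear estimates of Lemma~\ref{es:resonant} and Remark~\ref{re-y4.2} playing the role of the missing $l^2$-estimate. Since $M(\vec u)=\|\vec u\|_{L^2_xh^1}^2$ is conserved --- which is exactly the content of the first observation in Section~4 --- the sequence $\{\vec u_n(t_n)\}$ is bounded in $L^2_xh^1$ with $\|\vec u_n(t_n)\|_{L^2_xh^1}^2\to m_0$, and after a time translation we may take $t_n=0$. I would then apply Proposition~\ref{pr6.640} to $\{\vec u_n(0)\}$ to obtain profiles $\vec\phi^j$, parameters $g_n^j=g_{\theta_n^j,\xi_n^j,x_n^j,\lambda_n^j}$, times $t_n^j$ and remainders $\vec w_n^J$ with the mass decoupling $\sum_j\|\vec\phi^j\|_{L^2_xh^1}^2+\lim_n\|\vec w_n^J\|_{L^2_xh^1}^2\le m_0$ and, via Remark~\ref{re-y4.2} with $\epsilon_0=1$, $\limsup_n\|e^{it\Delta}\vec w_n^J\|_{L^4_{t,x}l^2}\to0$ as $J\to\infty$. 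After passing to a subsequence, each $t_n^j$ tends to $0$ or to $\pm\infty$, and I associate to each $j$ the nonlinear profile $\vec v^j$: the maximal solution of \eqref{maineq} with data $\vec\phi^j$ at $t=0$ when $t_n^j\to0$, and the solution of \eqref{maineq} matching $e^{it\Delta}\vec\phi^j$ at $\pm\infty$ (built by a contraction near $\pm\infty$, where the free evolution is small in $L^4_{t,x}h^1$) when $t_n^j\to\pm\infty$; transporting $\vec v^j$ by $T_{g_n^j}$ gives a solution $\vec v_n^j$ of \eqref{maineq} with $\|\vec v_n^j(0)-g_n^je^{it_n^j\Delta}\vec\phi^j\|_{L^2_xh^1}\to0$.

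Next I would run the dichotomy on $\sup_jM(\vec\phi^j)$. If $\sup_jM(\vec\phi^j)<m_0$ (this covers also the case of no profiles), then by the definition of $m_0$ and the reduction of Section~3 each $\vec v^j$ is global with finite $L^4_{t,x}l^2$ and $L^4_{t,x}h^1$ norm, and $\sum_j\|\vec v^j\|_{L^4_{t,x}l^2}^2\lesssim_{m_0}1$ by the mass decoupling together with $A(M(\vec\phi^j))<\infty$. One then checks that $\vec u_n^J:=\sum_{j\le J}\vec v_n^j+e^{it\Delta}\vec w_n^J$ is an approximate solution of \eqref{maineq}: its $L^4_{t,x}l^2$ norm is bounded uniformly in $J$, $\|\vec u_n^J(0)-\vec u_n(0)\|_{L^2_xh^1}\to0$, and $(i\partial_t+\Delta)\vec u_n^J-\vec F(\vec u_n^J)\to0$ in $L^{4/3}_{t,x}l^2$ as $n,J\to\infty$; the stability theory (Theorem~\ref{mainloc}(3)) then gives $\|\vec u_n\|_{L^4_{t,x}l^2(I_n)}<\infty$ for $n$ large, contradicting both blow-up hypotheses. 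Hence $\sup_jM(\vec\phi^j)=m_0$, and the mass decoupling leaves exactly one nonzero profile, $\vec\phi^1$ with $M(\vec\phi^1)=m_0$, with $\|\vec w_n^1\|_{L^2_xh^1}\to0$, so $\vec u_n(0)=g_n^1e^{it_n^1\Delta}\vec\phi^1+\vec w_n^1$. If $t_n^1\to+\infty$ then, by the $T_{g_n^1}$-invariance of the $L^4_{t,x}l^2$ norm together with the smallness of the tail of $e^{it\Delta}\vec\phi^1$ and Remark~\ref{re-y4.2}, $\|e^{it\Delta}\vec u_n(0)\|_{L^4_{t,x}l^2((0,\infty))}\to0$, so by stability $\|\vec u_n\|_{L^4_{t,x}l^2(I_n\cap(0,\infty))}$ is finite, contradicting the forward blow-up; symmetrically $t_n^1\to-\infty$ is ruled out by the backward blow-up. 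Thus $t_n^1\to0$, and after absorbing $e^{it_n^1\Delta}\vec\phi^1\to\vec\phi^1$ we get $\vec u_n(0)=g_n^1\vec\phi^1+\vec w_n^1$ with $\|\vec w_n^1\|_{L^2_xh^1}\to0$; hence $(g_n^1)^{-1}\vec u_n(t_n)\to\vec\phi^1$ in $L^2_xh^1$, which yields the asserted convergence of $G\vec u_n(t_n)$ in the quotient space.

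The hard part will be the approximate solution estimate $(i\partial_t+\Delta)\vec u_n^J-\vec F(\vec u_n^J)\to0$ in the first branch. Because there is no $l^2$-estimate, the multilinear error must be measured at $h^1$-regularity: one expands $\vec F(\sum_{j\le J}\vec v_n^j+e^{it\Delta}\vec w_n^J)$, discards the off-diagonal terms using the divergence of the parameters in Proposition~\ref{pr6.640} --- here the interplay of the Galilean, scaling and translation orthogonality with the discrete weights $\langle j\rangle$ needs care not present in \cite{YZ} --- and absorbs every term containing $\vec w_n^J$ by feeding its $L^4_{t,x}l^2$ and $L^4_{t,x}l^4$ smallness into the interpolated forms of \eqref{eq: l2es} and \eqref{eq: h1es} with $\beta$ small. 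Summing the scattering norms of the infinitely many nonlinear profiles against the mass decoupling, and the $h^1$-weighted bookkeeping of the cross terms, are the only genuinely new points; the construction of nonlinear profiles, the stability input, and the elimination of $t_n^1=\pm\infty$ follow the templates of \cite{D,YZ} essentially verbatim.
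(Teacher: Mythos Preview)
Your proposal is correct and follows essentially the same route as the paper's sketch: profile decomposition, nonlinear profiles, approximate solution $\vec u_n^J$, asymptotic solvability in $L^{4/3}_{t,x}l^2$ via Lemma~\ref{es:resonant} and Remark~\ref{re-y4.2}, and stability to force a single profile with $t_n^1\to 0$. One small over-worry: since the group $G$ acts only on the $x$-variable and commutes with the discrete weights $\langle j\rangle$, there is no new ``$h^1$-weighted bookkeeping'' in the decoupling of nonlinear profiles beyond what Lemma~\ref{es:resonant} already provides --- the paper records this as Lemma~\ref{le-y4.3} and treats it exactly as in the scalar case.
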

\begin{proof}[Sketch of the proof] By translating $\vec{u}_{n}$ (and $I_n$) in time, we may take
$t_n = 0$ for all $n$, thus,
$$\lim\limits_{n\rightarrow\infty}\left\|\vec{u}_{n}\right\|_{L_{t,x}^{4}l^2(I_n\cap(-\infty,0))}=
\lim\limits_{n\rightarrow\infty}\left\|\vec{u}_{n}\right\|_{L_{t,x}^{4}l^2(I_n\cap(0,+\infty))}=+\infty.$$
$\limsup\limits_{n\rightarrow\infty}M\left(\vec{u}_{n}\right)=m_{0}$ implies that $\{\vec{u}_{n}\}$ is bounded  (passing to a subsequence if necessary) in $L_x^2h^1$.  Therefore, by Proposition \ref{pr6.640}, we have
$$\vec{u}_n(0,x) =   \sum_{j=1}^J g_n^j e^{it_n^j \Delta_{\mathbb{R}^2}} \vec{\phi}^j + \vec{w}_n^J(x),$$
where $t_n^{j}\in \mathbb{R}$,  $g_n^{j}\in G$. By extracting subsequence and time translation, we can assume $t_n^{j}\rightarrow t^{j}\in \{-\infty,0,+\infty\}$ as $n\rightarrow \infty$.

We now define a nonlinear profile $\vec{v}^{j}: \mathbb{R} \times (\mathbb{R}^{2}\times\mathbb{Z}^{2}) \rightarrow \mathbb{C}$ associated to $\vec{\phi}^{j}$ and depending on the limiting value of $t_{n}^{j}$, as follows:
\begin{enumerate}
	\item If $t_{n}^{j}$ is identically zero, we define $\vec{v}^{j}$ to be the maximal-lifespan solution with initial data $\vec{v}^{j}(0)=\vec{\phi}^{j}$.
	\item If $t_{n}^{j}$ converges to $+\infty$, we define $\vec{v}^{j}$ to be the maximal-lifespan solution which scatters forward in time to $e^{i t \Delta} \vec{\phi}^{j}$.
	\item If $t_{n}^{j}$ converges to $-\infty$, we define $\vec{v}^{j}$ to be the maximal-lifespan solution which scatters backward in time to $e^{i t \Delta} \vec{\phi}^{j}$.
\end{enumerate}
Similarly to the proof in \cite{CGZ,HP} to deal with the quintic nonlinear Schr\"odinger equation on $\mathbb{R} \times \mathbb{T}^{2}$, we can obtain the following decoupling property for the nonlinear profiles defined above. We also refer to \cite{MV1} for the original argument.
\begin{lemma}[\bf Decoupling of nonlinear profiles]\label{le-y4.3}
Let $\vec v_{n}^{j}:=T_{g_{n}^{(j)}}\left[\vec v^{j}\left(\cdot+t_{n}^{j}\right)\right]$ be the nonlinear solutions defined above. Then, for $j \neq k$, $0<\theta<1$,
  $$\left\|\|\vec{v}^{j}_{n}\|_{h^{\beta}} \|\vec{v}^{k}_{n}\|_{l^{2}}\right\|_{L_{t, x}^{2}} \rightarrow 0$$
   $$\left\|\|\vec{v}^{j}_{n}\|^{1-\theta}_{h^{\beta}} \|\vec{v}^{k}_{n}\|^{\theta}_{l^{2}}\right\|_{L_{t, x}^{4}} \rightarrow 0$$as $n \rightarrow \infty$.
\end{lemma}
We then define the approximant $\vec{u}_{n}^{(l)} \in C_{t}^{0} L_{x}^{2}h^1\left(\mathbb{R} \times \mathbb{R}^{2}\times \mathbb{Z}^{2}\right)$ to $\vec{u}_{n}$ for $n, l=1,2, \ldots$ by the formula
\begin{equation}
\vec{u}_{n}^{l}(t):=\sum_{j=1}^{l} T_{g_{n}^{(j)}}\left[\vec{v}^{j}\left(\cdot+t_{n}^{j}\right)\right](t)+e^{i t \Delta} \vec{w}_{n}^{l}.	
\end{equation}
\begin{lemma}[Asymptotic solvability of equation]
We have
$$
\lim _{l \rightarrow \infty} \limsup _{n \rightarrow \infty}\left\|\left(i \partial_{t}+\Delta\right) \vec{u}_{n}^{l}-\vec{F}\left(\vec{u}_{n}^{l}\right)\right\|_{L_{t, x}^{4 /3}l^2\left(\mathbb{R} \times \mathbb{R}^{2}\times \mathbb{Z}^{2}\right)}=0.
$$
\end{lemma}
\proof Write
$$
\vec v_{n}^{j}:=T_{g_{n}^{(j)}}\left[\vec v^{j}\left(\cdot+t_{n}^{j}\right)\right].
$$
By the definition of $\vec u_{n}^{l}$,  so it suffices by the triangle inequality to show that
$$
\lim _{l \rightarrow \infty} \limsup _{n \rightarrow \infty}\left\|\vec F\left(\vec u_{n}^{l}-e^{i t \Delta} \vec w_{n}^{l}\right)-\vec F\left(\vec u_{n}^{l}\right)\right\|_{L_{t, x}^{4 /3}l^2\left(\mathbb{R} \times \mathbb{R}^{2}\times \mathbb{Z}^{2}\right)}=0
$$
and
$$
\lim _{n \rightarrow \infty}\left\|\vec F\left(\sum_{j=1}^{l} \vec v_{n}^{j}\right)-\sum_{j=1}^{l} \vec F\left(\vec v_{n}^{j}\right)\right\|_{L_{t, x}^{4/3}l^2\left(\mathbb{R} \times \mathbb{R}^{2}\times \mathbb{Z}^{2}\right)}=0
$$
for each $l$.
The first inequality follows immediately from Lemma \ref{es:resonant} and Remark \ref{re-y4.2}. For the second inequality, we use the elementary inequality
$$
\left|F\left(\sum_{j=1}^{l} z_{j}\right)-\sum_{j=1}^{l} F\left(z_{j}\right)\right| \leq C_{l, d} \sum_{j \neq j^{\prime}}\left|z_{j}\right|\left|z_{j}^{\prime}\right|^{2}
$$
for some $C_{l, d}<\infty$, and the claim follows from Lemma \ref{es:resonant} and lemma \ref{le-y4.3}.

Then we combine this lemma and $\lim _{n \rightarrow \infty} M\left(\vec u_{n}^{l}(0)-\vec u_{n}(0)\right)=0$, using item 3 of Theorem \ref{mainloc}, continuing to follow the argument in Section 5 of \cite{TVZ1}, we can show $\sup\limits_{j}M(\vec{\phi}^{j})=m_0$. where $\vec{\phi}^{j}=\{\phi_p^{j}\}_{p\in \mathbb{Z}}$, this implies $J=1$, $M(\vec{\phi}^{1})=m_0$ and
$$\vec{u}_{n}(0,x)= g_n^{1} e^{it_n^{1} \triangle} \vec{\phi}^{1} +\vec{w}_n^{1}=:g_{n} e^{it_{n} \triangle} \vec{\phi} +\vec{w}_{n}.$$
Furthermore, we obtain $\lim_{n\rightarrow\infty}t_{n}=0$. This shows Proposition \ref{pr-y4.4} is true.
	
\end{proof}
 Therefore, similar to Theorem 3.3 in \cite{YZ}, we have
\begin{theorem}[Reduction to almost periodic solutions]\label{th-y4.3}
Assume $m_{0}<+\infty$. Then there exits a solution (calling critical element) $\vec{u}\in C_{t}^{0}L_{x}^{2}h^{1}\left(I\times \mathbb{R}^2\times \mathbb{Z}^2\right)\bigcap L_{t}^{4}L_{x}^{4}l^2\left(I\times \mathbb{R}^2\times \mathbb{Z}^2\right)$ to \eqref{maineq} with $I$ the maximal lifespan interval such that
\begin{enumerate}
    \item $M(\vec{u})=m_{0},$
    \item $\vec{u}$ blows up at both directions in time, i.e. $\|\vec{u}\|_{L_{t,x}^{4}l^2(I\cap(-\infty,t_{0}))}=
        \|\vec{u}\|_{L_{t,x}^{4}l^2(I\cap(\widetilde{t}_{0},+\infty))}=+\infty$, for some $t_{0},\widetilde{t}_{0}\in I,$
    \item $\vec{u}$ is an almost periodic solution modulo $G,$
\end{enumerate}
where $\vec{u}$ is called an almost periodic solution modulo $G$ if the quotiented orbit $\{G\vec{u}:t\in I\}$ is a precompact subset of $G\backslash H^{1}h^{1}(\mathbb{R}^2\times\mathbb{Z}^2).$
\end{theorem}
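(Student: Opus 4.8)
The plan is to deduce Theorem~\ref{th-y4.3} from the Palais--Smale condition modulo $G$ (Proposition~\ref{pr-y4.4}) by the standard Tao--Visan--Zhang reduction, exactly as in Theorem 3.3 of \cite{YZ}. First I would build a minimizing sequence of blow-up solutions. Since $A$ is nondecreasing with critical value $m_0<\infty$, pick solutions $\vec u_n$ of \eqref{maineq} with maximal lifespan $I_n$ such that $M(\vec u_n)\to m_0$ and $\|\vec u_n\|_{L^4_{t,x}l^2(I_n\times\mathbb{R}^2\times\mathbb{Z}^2)}\to\infty$. Because $t\mapsto\|\vec u_n\|_{L^4_{t,x}l^2(I_n\cap(-\infty,t))}$ is continuous, nondecreasing, and runs from $0$ to $\infty$, one can pick $t_n\in I_n$ at which the two half-norms are equal; translating in time we may set $t_n=0$, so that
$$\|\vec u_n\|_{L^4_{t,x}l^2(I_n\cap(-\infty,0))}=\|\vec u_n\|_{L^4_{t,x}l^2(I_n\cap(0,\infty))}\longrightarrow\infty.$$

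Next, apply Proposition~\ref{pr-y4.4}: along a subsequence there are $g_n\in G$ with $g_n^{-1}\vec u_n(0)\to\vec\phi$ strongly in $H^1h^1\subset L^2_xh^1$. Since $M$ and the scattering norm $L^4_{t,x}l^2$ are $G$-invariant, we may replace $\vec u_n$ by $T_{g_n^{-1}}\vec u_n$ and assume $\vec u_n(0)\to\vec\phi$ with $M(\vec\phi)\le m_0$. Let $\vec u$ be the maximal-lifespan solution of \eqref{maineq} with $\vec u(0)=\vec\phi$ and $I$ its lifespan. If $\|\vec u\|_{L^4_{t,x}l^2(I\cap(0,\infty))}<\infty$, then by the local theory and the reduction of the scattering norm of Section 3, $\vec u$ is global forward in time with finite $L^4_{t,x}h^1$ norm there, and the stability statement of item (3) of Theorem~\ref{mainloc}, applied together with the weaker nonlinear bounds \eqref{eq: l2es}--\eqref{eq: h1es} of Lemma~\ref{es:resonant}, forces $\|\vec u_n\|_{L^4_{t,x}l^2(I_n\cap(0,\infty))}<\infty$ for $n$ large, contradicting the choice of $t_n$. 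Hence $\vec u$ blows up forward in time in $L^4_{t,x}l^2$, and symmetrically backward; this is property (2) with $t_0=\widetilde{t}_0=0$. Moreover $M(\vec\phi)<m_0$ would place $\vec u$ strictly below the critical mass and again yield a globally finite scattering norm, which is impossible; therefore $M(\vec u)=M(\vec\phi)=m_0$, which is property (1).

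Finally, to get property (3): given any sequence $\tau_k\in I$, the double-sided blow-up just established gives $\|\vec u\|_{L^4_{t,x}l^2(I\cap(-\infty,\tau_k))}=\|\vec u\|_{L^4_{t,x}l^2(I\cap(\tau_k,\infty))}=\infty$ for every $k$. Applying Proposition~\ref{pr-y4.4} to the constant sequence $\vec u_n\equiv\vec u$ with these times $\tau_k$ shows that $G\vec u(\tau_k)$ has a subsequence converging in $G\backslash H^1h^1(\mathbb{R}^2\times\mathbb{Z}^2)$. Since $\{\tau_k\}$ was arbitrary, $\{G\vec u(t):t\in I\}$ is precompact in $G\backslash H^1h^1$, i.e. $\vec u$ is almost periodic modulo $G$. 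This closes the proof.

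The step I expect to be the main obstacle is the perturbative part of the second paragraph — the stability/approximation argument underlying both the identification $M(\vec\phi)=m_0$ and the double-sided blow-up. In the 1D setting of \cite{YZ} this runs at the $l^2$ level with a clean trilinear estimate, whereas here one is forced to use the weaker estimates \eqref{eq: l2es}--\eqref{eq: h1es}, so the $h^1$-norms (in the discrete variable) of the approximants must be controlled uniformly and prevented from blowing up. This is exactly the difficulty addressed through Remark~\ref{re-y4.2}, the decoupling Lemma~\ref{le-y4.3}, and the asymptotic-solvability lemma in the proof of Proposition~\ref{pr-y4.4}; once that proposition is in hand, the rest of the argument here is routine.
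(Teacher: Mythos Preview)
Your proposal is correct and follows exactly the approach the paper indicates: the paper does not give a self-contained proof of Theorem~\ref{th-y4.3} but simply states ``Therefore, similar to Theorem 3.3 in \cite{YZ}, we have\ldots'', deferring to the Tao--Visan--Zhang scheme once Proposition~\ref{pr-y4.4} is in hand. Your write-up supplies the standard details of that reduction (minimizing sequence, extraction via Palais--Smale, identification of the critical mass, double-sided blow-up, and precompactness of the orbit by a second application of Proposition~\ref{pr-y4.4}), and you correctly locate the only genuinely delicate point---the $h^1$-level stability/approximation---inside the proof of Proposition~\ref{pr-y4.4} rather than here.
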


 Finally, we phrase the property of almost periodicity modulo $G$ of the solution to \eqref{eq-main1.1} in a ``quantitative'' version.
\begin{proposition}\label{pr-y4.8}
The following statements are equivalent.
\begin{enumerate}
  \item $\vec{u}\in C_{t,loc}^{0}L^2h^1(I\times\mathbb{R}^2\times\mathbb{Z}^2)$ is almost periodic modulo $G.$
  \item $\{G\vec{u}(t):t\in I\}$ is precompact in $G\backslash H^{1}h^{1}(\mathbb{R}^2\times\mathbb{Z}^2).$
  \item $\exists x(t),\xi(t),N(t)$ such that  $\forall\eta>0$, $\exists K(\eta)>0,R(\eta)>0$ such that  for $t\in I,$
\begin{align}\label{eq-y4.2}
  \sum\limits_{|j|>K(\eta)}\langle j\rangle^{2}\|u_{j}\|_{L^2}^{2}&<\eta,\\ \label{eq-y4.3}
  \sum\limits_{|j|=0}^{K(\eta)} \langle j\rangle^{2}\int_{|x-x(t)|\geq \frac{R(\eta)}{N(t)}}|u_{j}(x)|^{2}\mathrm{d}x&<\eta, \\ \label{eq-y4.4}
  \sum\limits_{|j|=0}^{K(\eta)} \langle j\rangle^{2}\int_{|\xi-\xi(t)|\geq R(\eta)N(t)}|\hat{u}_{j}(\xi)|^{2}\mathrm{d}\xi&<\eta.
\end{align}
\end{enumerate}
\end{proposition}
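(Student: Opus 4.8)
The statement asserts the equivalence of three characterizations of almost periodicity modulo the symmetry group $G$: the abstract definition (precompactness of the quotiented orbit in $G\backslash H^1h^1$), its reformulation via quotient topology, and the concrete quantitative version with parameters $x(t),\xi(t),N(t)$ and the three smallness bounds \eqref{eq-y4.2}–\eqref{eq-y4.4}. Items (1) and (2) are essentially tautologically equivalent, since the almost periodicity modulo $G$ is \emph{defined} by precompactness of $\{G\vec u(t):t\in I\}$ in $G\backslash H^1h^1$; I would only need to remark that $\vec u\in C^0_{t,loc}L^2h^1$ makes $t\mapsto G\vec u(t)$ a well-defined continuous curve into the quotient so that the notion makes sense. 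The substantive content is the equivalence $(2)\Leftrightarrow(3)$, which is the standard ``compactness $\Leftrightarrow$ uniform concentration'' dictionary, and I would carry it out along the lines of the analogous statements in \cite{TVZ1,KV1,YZ}.

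For the direction $(2)\Rightarrow(3)$: assume $\{G\vec u(t):t\in I\}$ is precompact in $G\backslash H^1h^1$. Fix a representative for each orbit by choosing, for each $t$, parameters $\theta(t),x(t),\xi(t),\lambda(t)$ (set $N(t)=1/\lambda(t)$) so that $g_{\theta(t),\xi(t),x(t),\lambda(t)}^{-1}\vec u(t)$ lies in a fixed precompact subset $\mathcal K\subset H^1h^1$ (this is possible after the usual measurable-selection / approximate-minimizer argument; precompactness in the quotient gives parameters realizing the orbit distance up to a fixed error). Precompactness of $\mathcal K$ in $H^1h^1(\mathbb R^2\times\mathbb Z^2)$ means exactly that the family $\{g(t)^{-1}\vec u(t)\}$ is uniformly small outside a large ball in the discrete variable $j$ (giving \eqref{eq-y4.2} after undoing the phase/Galilei/translation/dilation, none of which affect the $h^1$-profile in $j$), uniformly small outside a large ball in $x$ (giving \eqref{eq-y4.3} after translating by $x(t)$ and rescaling by $N(t)$), and — applying the Fourier transform, which also maps a precompact set to a precompact set, and noting that Galilean boosts act as Fourier translations — uniformly small outside a large ball in $\xi$ (giving \eqref{eq-y4.4} after translating by $\xi(t)$ and rescaling by $N(t)$). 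The three conditions are just the statement that a precompact subset of $L^2_x h^1$ is tight simultaneously in $j$, in $x$, and in $\xi$ (the last after Fourier transform), which is the Riesz–Kolmogorov / Pego compactness criterion.

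For the direction $(3)\Rightarrow(2)$: given a sequence $t_n\in I$, I must produce a subsequence along which $G\vec u(t_n)$ converges in the quotient, i.e. produce group elements $g_n$ and a limit $\vec\phi\in H^1h^1$ with $g_n^{-1}\vec u(t_n)\to\vec\phi$ in $H^1h^1$ up to a subsequence. Take $g_n=g_{0,\xi(t_n),x(t_n),1/N(t_n)}$ and set $\vec v_n:=g_n^{-1}\vec u(t_n)$. Conditions \eqref{eq-y4.2}–\eqref{eq-y4.4} translate into: $\{\vec v_n\}$ is uniformly tight in $j$, uniformly tight in $x$, and uniformly tight in $\xi$, and it is bounded in $H^1h^1$ because $M(\vec u)=\|\vec u(t_n)\|_{L^2_xh^1}^2$ is conserved and the $h^1$-tightness \eqref{eq-y4.2} upgrades the trivial $L^2_xh^1$ bound to an $H^1h^1$ bound (the finitely many low modes $|j|\le K$ carry $H^1_x$ regularity by \eqref{eq-y4.4} and the high modes are negligible by \eqref{eq-y4.2}). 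A bounded sequence that is tight in $x$ and in $\xi$ is precompact in $L^2_x$ by Riesz–Kolmogorov (equivalently Pego's criterion), and combined with tightness in $j$ this gives precompactness in $H^1h^1$; extract a convergent subsequence $\vec v_n\to\vec\phi$. Then $G\vec u(t_n)\to G\vec\phi$ in the quotient, proving precompactness of the orbit.

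I expect the main obstacle to be purely bookkeeping rather than conceptual: keeping careful track of how each of the four group parameters acts under the Fourier transform (phase rotation is trivial, spatial translation becomes a Fourier phase, Galilean boost becomes a Fourier translation, dilation becomes inverse dilation in frequency) so that the three tightness conditions \eqref{eq-y4.2}, \eqref{eq-y4.3}, \eqref{eq-y4.4} line up correctly with tightness of $\vec v_n$ in $j$, $x$, $\xi$ respectively, and handling the measurable selection of the parameters $x(t),\xi(t),N(t)$ in the $(2)\Rightarrow(3)$ direction with the standard ``choose an approximate minimizer of the orbit distance'' device. Since all of this is identical to the one-dimensional case treated in \cite{YZ} and to the classical Euclidean arguments in \cite{TVZ1,KV1}, I would state the result and refer the reader there for the routine details, perhaps writing out only the Fourier-side tightness step where the Galilean parameter enters.
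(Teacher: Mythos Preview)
Your proposal is correct and in fact goes further than the paper itself: the paper states this proposition without proof, introducing it simply as the ``quantitative version'' of almost periodicity modulo $G$ and moving directly to the corollary. Your closing suggestion---to state the result and refer to \cite{YZ,TVZ1} for the routine Riesz--Kolmogorov/Pego argument---is precisely what the paper does.
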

\begin{corollary}\label{co-y4.9}
$\vec{u}\in C_{t,loc}^{0}L^2h^1(I\times\mathbb{R}^2\times\mathbb{Z}^2)$ is almost periodic modulo $G$. Then there exist $x(t),\xi(t),N(t)$ such that for arbitrary $\eta>0$, there exists $ R(\eta)>0$ such that for $t\in I,$
\begin{equation}\label{eq-y3.6}
\sum\limits_{j\in\mathbb{Z}^2} \langle j\rangle^{2}\left[\int_{|x-x(t)|\geq \frac{R(\eta)}{N(t)}}|u_{j}(x)|^{2}\mathrm{d}x+\int_{|\xi-\xi(t)|\geq R(\eta)N(t)}|\hat{u}_{j}(\xi)|^{2}\mathrm{d}\xi\right]<\eta.
\end{equation}
\end{corollary}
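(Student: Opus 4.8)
\textbf{Proof proposal for Corollary \ref{co-y4.9}.}

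The plan is to derive the single unified bound \eqref{eq-y3.6} from the three separate bounds \eqref{eq-y4.2}, \eqref{eq-y4.3}, \eqref{eq-y4.4} in Proposition \ref{pr-y4.8}(3), simply by a splitting argument in the discrete variable $j$ combined with a choice of the parameter $R(\eta)$ that dominates all the pieces. First I would fix $\eta>0$ and apply Proposition \ref{pr-y4.8}(3) with the threshold $\eta/3$ in place of $\eta$; this produces an integer $K=K(\eta/3)$ and a radius $R_0=R(\eta/3)$ together with the functions $x(t),\xi(t),N(t)$, so that the high-discrete-frequency tail $\sum_{|j|>K}\langle j\rangle^2\|u_j\|_{L^2}^2<\eta/3$ and, for the finitely many $|j|\le K$, the spatial and frequency tails beyond $R_0/N(t)$ and $R_0 N(t)$ are each $<\eta/3$, uniformly in $t\in I$.

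Next I would split the sum in \eqref{eq-y3.6} as $\sum_{|j|\le K}+\sum_{|j|>K}$. For the high-frequency part, each of the two integrals is bounded by $\|u_j\|_{L^2}^2$ (and $\|\hat u_j\|_{L^2}^2 = \|u_j\|_{L^2}^2$ by Plancherel), so that part contributes at most $2\sum_{|j|>K}\langle j\rangle^2\|u_j\|_{L^2}^2 < 2\eta/3$. For the low-frequency part $|j|\le K$, I would apply \eqref{eq-y4.3} and \eqref{eq-y4.4} directly: the spatial integrals sum to less than $\eta/3$ and the frequency integrals sum to less than $\eta/3$, provided the cut radius in \eqref{eq-y3.6} is taken to be $R(\eta):=R_0=R(\eta/3)$; increasing $R(\eta)$ only shrinks the tails, so one may as well take $R(\eta)=\max\{1,R(\eta/3)\}$ to keep it nondecreasing. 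Actually, to make the constants line up cleanly I would instead split $\eta$ into four parts $\eta/4$ (two for the low-frequency spatial and frequency tails, and absorb the factor-of-2 in the high-frequency part into a threshold $\eta/4$), so that the total is $< \eta/4 + \eta/4 + 2\cdot\eta/4 = \eta$. Either way the bookkeeping is elementary.

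I do not expect a genuine obstacle here; this is a routine repackaging corollary. The only point that needs a word of care is that the functions $x(t),\xi(t),N(t)$ furnished by Proposition \ref{pr-y4.8}(3) are the \emph{same} for all $\eta$ (they are intrinsic to the almost periodic solution, not to the threshold), so that the statement ``there exist $x(t),\xi(t),N(t)$ such that for arbitrary $\eta>0$ there exists $R(\eta)$\ldots'' is legitimate with the quantifiers in that order — one fixes the compactness data once, then chooses $R(\eta)$ afterwards. This is already implicit in the structure of Proposition \ref{pr-y4.8}, so the corollary follows immediately.
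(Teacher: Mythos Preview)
Your proposal is correct and is exactly the elementary splitting argument one expects; the paper itself gives no written proof for this corollary, treating it as an immediate consequence of Proposition~\ref{pr-y4.8}(3). Your remark about the quantifier order on $x(t),\xi(t),N(t)$ versus $R(\eta)$ is the only point worth noting, and you handled it correctly.
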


\begin{theorem}[The estimate of $N(t)$]\label{th-y4.5}The following statements hold:
\begin{enumerate}
  \item For any nonzero almost periodic solution $\vec{u}$ to \eqref{eq-main1.1} there exists $\delta(\vec{u}) > 0$ such that for any $t_0 \in I$,
      $$\|\vec{u}\|_{L^4_{t,x}h^{1}([t_0,t_0+\frac{\delta}{N(t_0)^2}]\times\mathbb{R}^2)}\sim \|\vec{u}\|_{L^4_{t,x}h^{1}([t_0-\frac{\delta}{N(t_0)^2},t_0]\times\mathbb{R}^2)}\sim1.$$
  \item If $J$ is an interval with $\|\vec{u}\|_{L^4_{t,x}h^{1}(J\times\mathbb{R}^2)}=1$, then for $t_1,t_2\in J$,  $N(t_1)\sim_{m_0} N(t_2)$, and $|\xi(t_1)-\xi(t_2)|\lesssim N(J_k)$, where $N(J_k):=\sup_{t\in J_k}N(t)$.
  \item  Suppose $\vec{u}$ is a minimal mass blowup solution with $N(t) \leq 1$.  Suppose also that $J$
   is some interval partitioned into subintervals $J_k$ with $\|\vec{u}\|_{L^4_{t,x}h^{1}(J_k\times\mathbb{R}^2)}= 1$ on each $J_k$, then $N(J_k)\sim \int_{J_k}N(t)^3 dt\sim \inf_{t\in J_k}N(t)$ and $\sum_{J_k}N(J_k)\sim \int_{J}N(t)^3 dt$.
   \item  If $\vec{u}(t,x)$ is a minimal mass blowup solution on an interval $J$, then
   $$\int_J N(t)^2dt\lesssim \|\vec{u}\|^{4}_{L_{t,x}^{4}h^{1}(J\times \mathbb{R}^2)}\lesssim 1+\int_J N(t)^2dt.$$
\end{enumerate}
\end{theorem}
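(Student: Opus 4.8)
The plan is to establish the four assertions in the order stated, following the framework of Dodson \cite{D} and its one-dimensional counterpart \cite{YZ}: part (1) is the heart of the matter, while parts (2)--(4) are essentially combinatorial consequences of (1) together with the quantitative almost periodicity recorded in Proposition \ref{pr-y4.8} and Corollary \ref{co-y4.9}. The one structural difference from \cite{YZ} is that the scattering norm here is measured in $L^4_{t,x}h^1$ rather than $L^4_{t,x}l^2$; to cope with this we repeatedly invoke the reduction of Section 3, namely that on a short time interval a bound for $\|\vec u\|_{L^4_{t,x}l^2}$ upgrades, via the nonlinear estimate \eqref{eq: h1es} and a continuity argument, to a bound for $\|\vec u\|_{L^4_{t,x}h^1}$ at the cost of constants only.

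\textbf{Part (1).} Fix $t_0\in I$ and, using the symmetries $T_g$, normalize $N(t_0)=1$, $\xi(t_0)=0$, $x(t_0)=0$; translating in time we may take $t_0=0$. For the \emph{upper bound}, Proposition \ref{pr-y4.8} shows that $\vec u(0)$ ranges over a fixed precompact set $K\subset L^2_xh^1$ which does not contain $0$ since $M(\vec u)=m_0>0$; by the local well-posedness and stability statements of Theorem \ref{mainloc}, applied uniformly over $K$ and combined with the $l^2\to h^1$ upgrade on short intervals, there is $\delta=\delta(\vec u)>0$, independent of $t_0$, with $\|\vec u\|_{L^4_{t,x}h^1([-\delta,\delta]\cap I)}\lesssim 1$. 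For the \emph{lower bound}, we shrink $\delta$ below the stability threshold and argue by contradiction: if $\|\vec u\|_{L^4_{t,x}h^1([0,\delta]\cap I)}$ could be made arbitrarily small (after the above normalization, along a sequence of base points), then the stability theory of Theorem \ref{mainloc} forces the corresponding solution to remain $L^2_xh^1$-close to its free evolution $e^{it\Delta}\vec u(0)$ on $[0,\delta]\cap I$; but a freely evolving datum at unit frequency scale spreads spatially to scale $\sim\delta$ by time $\delta$, whereas Corollary \ref{co-y4.9} forces $\vec u(t)$ to stay spatially concentrated at scale $R(\eta)/N(t)$ for all $t$, and choosing $\eta$ small and $\delta$ not too small produces a contradiction. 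Running the argument backward in time gives the companion estimate. This dispersion-versus-concentration dichotomy is the main technical point.

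\textbf{Parts (2)--(4).} Let $J$ satisfy $\|\vec u\|_{L^4_{t,x}h^1(J)}=1$ and write $N(J):=\sup_{t\in J}N(t)$. Subdivide $J$ into consecutive subintervals on each of which this norm lies below the stability threshold; by part (1) there are $O(1)$ of them and $N$ is comparable on each, hence $N(t_1)\sim_{m_0}N(t_2)$ for $t_1,t_2\in J$. The bound $|\xi(t_1)-\xi(t_2)|\lesssim N(J)$ follows from the frequency localization of Corollary \ref{co-y4.9} together with the bilinear/dispersive estimate of Proposition \ref{pr-y5.4} (and Remark \ref{convolution}): if $\xi$ moved by more than $N(J)$ over $J$, the solution would carry two frequency-separated bumps over a time span of length $\sim N(J)^{-2}$, forcing $\|\vec u\|_{L^4_{t,x}h^1(J)}$ to be large; this proves (2). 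For (3), on each $J_k$ we have $N(t)\sim N(J_k)=:N_k$ for $t\in J_k$ by (2) and $|J_k|\sim N_k^{-2}$ by (1), whence $\int_{J_k}N(t)^3\,dt\sim N_k^3|J_k|\sim N_k\sim N(J_k)$ and $\inf_{t\in J_k}N(t)\sim N_k$; summing over $k$ yields $\sum_{J_k}N(J_k)\sim\int_J N(t)^3\,dt$ (the hypothesis $N(t)\le 1$ ensures $|J_k|\gtrsim 1$, so that the partition is locally finite and these sums behave). For (4), partition $J$ into subintervals with $L^4_{t,x}h^1$-norm exactly $1$ plus one final shorter piece; the number of full subintervals equals $\|\vec u\|^4_{L^4_{t,x}h^1(J)}$ up to an additive $O(1)$, and on each full $J_k$ one has $\int_{J_k}N(t)^2\,dt\sim N_k^2|J_k|\sim 1$ by (1)--(2), so summing gives $\int_J N(t)^2\,dt\lesssim\|\vec u\|^4_{L^4_{t,x}h^1(J)}\lesssim 1+\int_J N(t)^2\,dt$.

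\textbf{Main obstacle.} The crux is the lower bound in part (1): one must rule out the scenario in which the critical element behaves on a $\delta N(t_0)^{-2}$-window like a freely dispersing solution, which is precisely where almost periodicity (Corollary \ref{co-y4.9}) is played against the local and stability theory (Theorem \ref{mainloc}); once (1) is available, (2)--(4) are elementary. A secondary but genuinely new point, compared with \cite{YZ}, is the systematic passage from $l^2$ to $h^1$ throughout; this is harmless here because the statements are phrased in terms of the $L^4_{t,x}h^1$ scattering norm directly and the $l^2\to h^1$ upgrade on short intervals (Section 3, via \eqref{eq: h1es}) costs only constants, but it must be threaded carefully through each of the four steps.
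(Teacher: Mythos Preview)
The paper's own treatment of this theorem is a single sentence: ``The proof is similar to that of Lemma 2.12, Lemma 2.13 and Lemma 2.15 in \cite{D} and we omit it.'' Your proposal follows exactly that route, so at the level of strategy it is aligned with the paper.

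There is, however, a gap in your mechanism for the lower bound in part (1). You propose to contradict smallness of the $L^4_{t,x}h^1$ norm by pitting spatial spreading of the free evolution against the spatial concentration from Corollary \ref{co-y4.9}. But after normalizing to unit frequency scale, on a time interval of length $\delta$ the propagator $e^{it\Delta}$ moves and spreads wave packets by only $O(\delta)$ in space; since $\delta$ must be taken small for your upper bound to hold, there is no real dispersion to exploit, and ``choosing $\delta$ not too small'' is not an option. The clean way to finish, and the one actually used in \cite{D}, is simpler: once stability gives $\|\vec u-e^{it\Delta}\vec u(0)\|_{L^4_{t,x}h^1([0,\delta])}\ll 1$, precompactness of the normalized orbit places $\vec u(0)$ in a fixed compact set $K\subset L^2_xh^1$ with $0\notin K$, and then Strichartz plus compactness give $\inf_{\phi\in K}\|e^{it\Delta}\phi\|_{L^4_{t,x}h^1([0,\delta])}=c(\delta)>0$, which already forces $\|\vec u\|_{L^4_{t,x}h^1([0,\delta])}\gtrsim c(\delta)$. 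No spatial spreading argument is needed.

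Your outline of parts (2)--(4) is correct and matches Dodson's arguments. One minor remark: the $l^2\to h^1$ upgrade you invoke from Section 3 is not actually required here. The local theory and stability in Theorem \ref{mainloc} are formulated directly in $L^4_{t,x}h^1$, using the estimate $\|\vec F(\vec u)\|_{h^1}\lesssim \|\vec u\|_{h^1}^3$ (an instance of \eqref{eq: h1es} with $\beta\le 1$), so the arguments for (1)--(4) run entirely at the $h^1$ level; the passage through $l^2$ becomes relevant only later, in the profile decomposition and Morawetz steps.
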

\begin{proof}
  The proof is similar to that of Lemma 2.12, Lemma 2.13 and Lemma 2.15 in \cite{D} and we omit it.
\end{proof}
\begin{remark}\label{re-y4.10}
By Theorem \ref{th-y4.5}, $|N'(t)|,|\xi'(t)|\lesssim N(t)^3$. We can use this fact to control the movement of $\xi(t)$.
\end{remark}

Fix three constants $0<\epsilon_3 \ll \epsilon_2\ll \epsilon_1<1$ in the following. By Corollary \ref{co-y4.9}  and Remark \ref{re-y4.10}, $\epsilon_1,\epsilon_2,\epsilon_3$ can also satisfy
\begin{equation}\label{eq-y5.1}
|N'(t)|+|\xi'(t)|\leq2^{-20}\frac{N(t)^3}{\epsilon_1^{1/2}},
\end{equation}
\begin{equation}\label{eq-y5.2}
\sum\limits_{j\in\mathbb{Z}} \langle j\rangle^{2}\left[\int_{|x-x(t)|\geq \frac{2^{-20}\epsilon_3^{-1/4}}{N(t)}}|u_{j}(x)|^{2}\mathrm{d}x+\int_{|\xi-\xi(t)|\geq 2^{-20}\epsilon_3^{-1/4}N(t)}|\hat{u}_{j}(\xi)|^{2}\mathrm{d}\xi\right]<\epsilon_2^2,
\end{equation}
and
$$\epsilon_3<\epsilon_2^{10}.$$
Suppose $M=2^{k_0}$ is a dyadic integer with $k_0\geq0$. Let $[0,T]$ be an interval such that $\|\vec{u}\|^4_{L^4_{t,x}h^{1}([0,T])}=M$ and $\int_0^T N(t)^3 dt=\epsilon_3M$. Partition $[0,T]=\cup_{l=0}^{M-1} J_l$ with $\|\vec{u}\|_{L^4_{t,x}h^{1}(J_l)}=1$, we call the intervals $J_l$'s small intervals.

\begin{definition}\label{de-y5.1}
For an integer $0\leq j<k_0$, $0\leq k<2^{k_0-j}$, let
$$G_{k}^{j}=\cup_{\alpha=k2^j}^{(k+1)2^j-1}J^{\alpha}.$$
Where $J^{\alpha}$'s satisfy $[0,T]=\cup_{\alpha=0}^{M-1} J^{\alpha}$  with
\begin{equation}\label{eq-y5.33}
\int_{J^{\alpha}} \big( N(t)^3 + \epsilon_3\|\vec{u}(t)\|^4_{L_x^4h^{1}(\mathbb{R}^2\times \mathbb{Z})} \big)dt=2\epsilon_3.
\end{equation}
 For$j\geq k_0$ let $G_{k}^j=[0,T].$
 Now suppose that $G_k^j=[t_0,t_1]$, let $\xi(G_k^j)=\xi(t_0)$ and define $\xi(J_l)$, $\xi(J^{\alpha})$ in a similar manner.
\end{definition}
We collect some useful properties of the small intervals below,
\begin{proposition}\label{re-y5.1}
\begin{enumerate}
\item It follows from Theorem \ref{th-y4.5} that $N(J_l)\sim \int_{J_l}N(t)^3 dt\sim \inf_{t\in J_l}N(t)$. Additionally, by \eqref{eq-y5.33}, we obtain
   \begin{equation}\label{eq-y5.34}
   \sum_{J_l\subset G_k^j}N(J_l)\lesssim \sum_{J_l\subset G_k^j}\int_{J_l}N(t)^3dt\lesssim \int_{G_k^j}N(t)^3dt\lesssim \sum_{\alpha=k2^j}^{(k+1)2^j-1}\int_{J^{\alpha}}N(t)^3dt\lesssim 2^j\epsilon_3.
   \end{equation}
\item By \eqref{eq-y5.1} and Definition \ref{de-y5.1},  for all $t \in G_k^{j}$,
  \begin{equation}
  |\xi(t)-\xi(G_k^{j})|\leq \int_{G_k^{j}}2^{-20}\epsilon_{1}^{-1/2}N(t)^3dt \leq 2^{j-19}\epsilon_3 \epsilon_{1}^{-1/2}.
  \end{equation}
  Therefore, for all $t \in G_k^{j}$ and $i\geq j$,
  $$\{\xi:2^{i-1}\leq |\xi-\xi(t)|\leq 2^{i+1} \} \subset \{\xi:2^{i-2}\leq |\xi-\xi(G_k^{j})|\leq 2^{i+2} \} \subset \{\xi:2^{i-3}\leq |\xi-\xi(t)|\leq 2^{i+3} \},$$
  and
  $$\{\xi:|\xi-\xi(t)|\leq 2^{i+1} \} \subset \{\xi: |\xi-\xi(G_k^{j})|\leq 2^{i+2} \} \subset \{\xi: |\xi-\xi(t)|\leq 2^{i+3} \}.$$

 \item Suppose $\vec{u}(t)$ is a minimal mass blowup solution to \eqref{eq-y1}. If $J$ is a time interval with
$\|\vec{u}\|_{L^4_{t,x}h^{1}(J)}\lesssim 1$, then
$$\|\vec{u}\|_{U^2_{\triangle}(h^{1};J)}\lesssim1 \ \ and \ \ \left\|P_{\xi(t_0),>\frac{N(J)}{2^4\epsilon_3^{1/4}}} \vec{u}\right\|_{U^2_{\triangle}(h^{1};J)}\lesssim \epsilon_2,$$ for any $t_0\in J$, where $N(J)=\sup_{t\in J}N(t)$. Furthermore, by Proposition \ref{pr-y5.3}(6),
 $$\|\vec{u}\|_{L^p_tL^q_x h^{1}(J)}\lesssim 1,\,\,\, \|P_{\xi(t_0),>\frac{N(J)}{2^4\epsilon_3^{1/4}}} \vec{u}\|_{L^p_tL^q_x h^{1}(J)}\lesssim \epsilon_2,$$
for $(p,q)$ admissible pair and $p>2$.

\item If $N(J)< 2^{i-5}\epsilon_3^{1/2},$ then
$$\|P_{\xi(G_{\alpha}^{i}), i-2 \leq \cdot \leq i+2} \mathbf{F}(\vec{u})\|_{L_t^{3/2}L_x^{6/5}h^{1}(J)}\lesssim \big\|P_{\xi(J),>\frac{N(J)}{2^{4}\epsilon_3^{1/4}}} \vec{u}\big\|_{L_t^{\infty}L^2_xl^2(J)} \|\vec{u}\|^2_{L_t^3L_x^6h^{1}(J)}\lesssim \epsilon_2.$$
So for $0\leq i \leq 11$, $N(G_{\alpha}^{i})< 2^{i-5}\epsilon_3^{1/2},$ since $G_{\alpha}^{i}$ is a union of $\leq 2^{11}$ such small intervals,
$$\|P_{\xi(G_{\alpha}^{i}), i-2 \leq \cdot \leq i+2} \mathbf{F}(\vec{u})\|_{L_t^{3/2}L_x^{6/5}h^{1}(G_{\alpha}^{i})}\lesssim \epsilon_2.$$
\end{enumerate}
\end{proposition}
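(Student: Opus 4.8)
The four assertions split into two groups: (1) and (2) are bookkeeping consequences of the partition of Definition~\ref{de-y5.1}, Theorem~\ref{th-y4.5}, and the normalizations \eqref{eq-y5.1}; (3) and (4) are the genuine local-in-time estimates, assembled from the $U^2/V^2$ calculus of Proposition~\ref{pr-y5.3}, the bilinear Strichartz estimate of Proposition~\ref{pr-y5.5} (and its convolution-kernel form in Remark~\ref{convolution}), and the discrete nonlinear bounds of Lemma~\ref{es:resonant}. For (1), the plan is to quote Theorem~\ref{th-y4.5}(3) verbatim for $N(J_l)\sim\int_{J_l}N(t)^3\,dt\sim\inf_{t\in J_l}N(t)$, then sum over the disjoint $J_l\subset G_k^j$ and bound $\sum_{J_l\subset G_k^j}\int_{J_l}N(t)^3\,dt\le\int_{G_k^j}N(t)^3\,dt=\sum_{\alpha=k2^j}^{(k+1)2^j-1}\int_{J^\alpha}N(t)^3\,dt\le 2^j\cdot2\epsilon_3$, each summand being controlled by \eqref{eq-y5.33}; this gives \eqref{eq-y5.34}. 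For (2), writing $G_k^j=[t_0,t_1]$ so $\xi(G_k^j)=\xi(t_0)$, I would integrate \eqref{eq-y5.1} to get $|\xi(t)-\xi(G_k^j)|\le\int_{G_k^j}|\xi'(s)|\,ds\le 2^{-20}\epsilon_1^{-1/2}\int_{G_k^j}N(s)^3\,ds\le 2^{j-19}\epsilon_3\epsilon_1^{-1/2}$, using the bound just obtained; since $\epsilon_3<\epsilon_2^{10}\ll\epsilon_1<1$ this is $\ll 2^{j-3}\le 2^{i-3}$ for $i\ge j$, so a triangle-inequality argument traps the annulus $\{2^{i-1}\le|\xi-\xi(t)|\le 2^{i+1}\}$ (resp.\ the ball $\{|\xi-\xi(t)|\le 2^{i+1}\}$) between the annuli (resp.\ balls) of radii $2^{i\mp2}$ and $2^{i\mp3}$ centered at $\xi(G_k^j)$.

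For (3), the estimate $\|\vec u\|_{U^2_\triangle(h^1;J)}\lesssim1$ is the standard local theory: under the hypothesis $\|\vec u\|_{L^4_{t,x}h^1(J)}\lesssim1$ one subdivides $J$ into $O(1)$ subintervals on which $\|\vec u\|_{L^4_{t,x}h^1}$ sits below the small-data threshold, applies Proposition~\ref{pr-y5.3}(7) together with $\|\vec F(u)\|_{DU^2_\triangle(h^1)}\lesssim\|\vec F(u)\|_{L^{4/3}_{t,x}h^1}\lesssim\|\vec u\|_{L^4_{t,x}h^1}^3$ (from \eqref{eq: h1es} and $M(\vec u)=m_0\lesssim1$) to bootstrap on each piece, and glues via Proposition~\ref{pr-y5.3}(5). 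The frequency-localized bound uses almost periodicity: at the reference time $t_0$, since $N(J)\ge N(t_0)$ the threshold $N(J)/(2^4\epsilon_3^{1/4})$ exceeds $2^{-20}\epsilon_3^{-1/4}N(t_0)$, so \eqref{eq-y5.2} gives $\|P_{\xi(t_0),>N(J)/(2^4\epsilon_3^{1/4})}\vec u(t_0)\|_{L^2h^1}<\epsilon_2$; then a bootstrap for $P_{\xi(t_0),>N(J)/(2^4\epsilon_3^{1/4})}\vec u$ closes, because by the resonance geometry ($\langle j\rangle\lesssim\langle j_1\rangle+\langle j_3\rangle$ on $R(j)$) any high-frequency output of $\mathbf F(\vec u)$ requires at least one high-frequency input, which one controls by pairing the convolution-kernel bilinear Strichartz estimate of Remark~\ref{convolution} with the already-established $\|\vec u\|_{U^2_\triangle(h^1;J)}\lesssim1$. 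The closing display of (3) is then Proposition~\ref{pr-y5.3}(6) applied to the two $U^2$ bounds.

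For (4), the hypothesis $N(J)<2^{i-5}\epsilon_3^{1/2}$ forces $N(J)/(2^4\epsilon_3^{1/4})<2^{i-9}\epsilon_3^{1/4}\ll 2^{i-2}$, so any trilinear product whose three factors all have spatial frequency below that threshold has output spatial frequency $\ll 2^{i-2}$; hence in $P_{\xi(G_\alpha^i),\,i-2\le\cdot\le i+2}\mathbf F(\vec u)$ at least one factor may be replaced by $P_{\xi(J),>N(J)/(2^4\epsilon_3^{1/4})}\vec u$ (using (2) to pass between the reference frequencies $\xi(G_\alpha^i)$ and $\xi(J)$). Placing that factor in $L^\infty_tL^2_xl^2$ and the other two in $L^3_tL^6_xh^1$, with Hölder in time ($\tfrac1\infty+\tfrac13+\tfrac13=\tfrac23$) and in space ($\tfrac12+\tfrac16+\tfrac16=\tfrac56$) and the discrete trilinear estimate underlying Lemma~\ref{es:resonant} (moving the weight $\langle j\rangle$ onto one of the $h^1$ factors), gives the first inequality; the second is (3), since $(3,6)$ is admissible. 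Finally, for $0\le i\le 11$ each small interval $J_l$ has $N(J_l)\sim\int_{J_l}N(t)^3\,dt\le2\epsilon_3<2^{i-5}\epsilon_3^{1/2}$ because $\epsilon_3$ is small, and $G_\alpha^i$ is a union of $O(2^{11})$ such intervals, so summing over them via $\|\cdot\|_{L^{3/2}_t(\cup_lJ_l)}=\bigl(\sum_l\|\cdot\|_{L^{3/2}_t(J_l)}^{3/2}\bigr)^{2/3}$ keeps the bound $\lesssim\epsilon_2$.

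I expect the main obstacle to be the frequency-localized persistence estimate in (3): unlike the $1$D situation of \cite{YZ}, where all the bookkeeping could be carried out at the $l^2$ level, here the $\langle j\rangle$ weights cannot be discarded and Lemma~\ref{es:resonant} does not factor cleanly, so one must simultaneously track which input carries the discrete $h^1$ weight and which carries the spatial high-frequency projection; the bilinear structure of $\mathbf F$ together with the scale separation $\epsilon_3\ll\epsilon_2\ll\epsilon_1$ is precisely what lets this bootstrap close.
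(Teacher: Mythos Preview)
Your proposal is correct and follows the same approach the paper indicates; in fact the paper gives no separate proof for this proposition, only the inline justifications embedded in the statement, and you have simply fleshed those out. Parts (1) and (2) match exactly. For (3) and (4) your outline is the standard one (subdivide, bootstrap via Proposition~\ref{pr-y5.3}(7) and the $L^{4/3}_{t,x}h^1$ dual Strichartz bound on $\mathbf F(\vec u)$, then invoke Proposition~\ref{pr-y5.3}(6)), which is precisely what the paper's references to \cite{YZ} and \cite{D} are pointing at.

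One small slip worth correcting: in (3) you write that ``by the resonance geometry ($\langle j\rangle\lesssim\langle j_1\rangle+\langle j_3\rangle$ on $R(j)$) any high-frequency output of $\mathbf F(\vec u)$ requires at least one high-frequency input.'' The projector $P_{\xi(t_0),>N(J)/(2^4\epsilon_3^{1/4})}$ is a \emph{spatial} frequency cutoff, so the reason a high output forces a high input is just the sumset property of Fourier supports under pointwise multiplication, not the discrete resonance relation. The resonance inequality $\langle j\rangle\lesssim\langle j_1\rangle+\langle j_3\rangle$ is what you need separately, to distribute the $h^1$ weight onto the remaining factors once the high-spatial-frequency factor has been isolated and placed in $l^2$ (as in the first display of part (4)). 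Keep these two mechanisms distinct and the bootstrap in (3) closes exactly as you describe.
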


Now we define our spaces as in the Section 3 of \cite{D}, in which we derive the long time Strichartz estimates.
\begin{definition}[$\tilde{X}_{k_0}$ spaces]\label{de-y5.2}
For any $G_k^{j}\subset[0,T]$ let
\begin{equation}\label{eq-y5.3'}
  \|\vec{u}\|^2_{X(G_k^{j})}:=\sum_{0\leq i< j}2^{i-j}\sum_{G_{\alpha}^{i}\subset G_k^{j}}\|P_{\xi(G_{\alpha}^{i}), i-2\leq\cdot\leq i+2}\vec{u}\|^2_{U^2_{\triangle}(h^{1};G_{\alpha}^{i}\times\mathbb{R}^2)}
  +\sum_{i\geq j}\|P_{\xi(G_{k}^{j}),i-2\leq\cdot\leq i+2} \vec{u}\|^2_{U^2_{\triangle}(h^{1};G_{k}^{j}\times\mathbb{R}^2)}.
\end{equation}
Here $P_{\xi(t),i-2\leq\cdot\leq i+2} \vec{u}=e^{ix \cdot \xi(t)}P_{i-2\leq\cdot\leq i+2}(e^{-ix\cdot \xi(t)} \vec{u})$ with $P_{i-2\leq\cdot\leq i+2}$ being the Littlewood-Paley projector.\\
Then  define $\tilde{X}_{k_0}$ to be the supremum of \eqref{eq-y5.3'} over all intervals $G_k^j\subset[0,T]$ with $k\leq k_0$.
\begin{equation}\label{X}
\|\vec{u}\|^2_{\tilde{X}_{k_0}([0,T])}:=\sup_{0\leq j\leq k_0}\sup_{G_k^{j}\subset[0,T]}\|\vec{u}\|^2_{X(G_k^{j})}.
\end{equation}
Also for $0\leq k_{\ast}\leq k_0$, let
\begin{equation}\label{XXX}
\|\vec{u}\|^2_{\tilde{X}_{k_{\ast}}([0,T])}:=\sup_{0\leq j\leq k_{\ast}}\sup_{G_k^{j}\subset[0,T]}\|\vec{u}\|^2_{X(G_k^{j})}.
\end{equation}
\end{definition}

\begin{definition}[$\tilde{Y}_{k_0}$ spaces]
The $\tilde{Y}_{k_0}$ norm measures the $\tilde{X}_{k_0}$ norm of $\vec{u}$ at scales much higher than $N(t)$. This norm provides some crucial ``smallness", closing a bootstrap argument in the next section. Let
\begin{equation}
\begin{split}
  \|\vec{u}\|^2_{Y(G_k^{j})}&:=\sum_{0< i< j}2^{i-j}\sum_{G_{\alpha}^{i}\subset G_k^{j}:N(G_{\alpha}^{i})\leq 2^{i-5}\epsilon_3^{1/2}}\|P_{\xi(G_{\alpha}^{i}), i-2\leq\cdot\leq i+2} \vec{u}\|^2_{U^2_{\triangle}(h^{1};G_{\alpha}^{i}\times\mathbb{R}^2)} \\
    & +\sum_{i\geq j,i>0:N(G_{k}^{j})\leq 2^{i-5}\epsilon_3^{1/2}}\|P_{\xi(G_{k}^{j}),i-2\leq\cdot\leq i+2} \vec{u}\|^2_{U^2_{\triangle}(h^{1};G_{k}^{j}\times\mathbb{R}^2)}.
\end{split}
\end{equation}
Define $\|\vec{u}\|_{\tilde{Y}_{k_{\ast}}([0,T])}$
using $\|\vec{u}\|_{Y(G_k^{j})}$ in the same way as $\|\vec{u}\|_{\tilde{X}_{k_{\ast}}([0,T])}$ was done.
\end{definition}

After giving the long time Strichartz norms, we should point out the relationship between $L^p_tL^q_xh^{1}$ norm and the long time Strichartz norms, which can be easily obtained from the definition of the long time Strichartz norms as to Lemma 5.7 in  \cite{YZ}.
\begin{lemma}\label{le-y5.7}
For $i < j$, $(p,q)$ an admissible pair, we have
\begin{equation}\label{eq-y5.9}
  \|P_{\xi(t),i}\vec{u}\|_{L^p_tL^q_xh^1(G^j_k\times\mathbb{R}^2)}\lesssim 2^{\frac{j-i}{p}} \|\vec{u}\|_{\tilde{X}_j(G^j_k)}.
\end{equation}
\begin{equation}\label{eq-y5.9'}
  \|P_{\xi(t),\geq j}\vec{u}\|_{L^p_tL^q_xh^1(G^j_k\times\mathbb{R}^2)}\lesssim  \|\vec{u}\|_{X(G^j_k)}.
\end{equation}
\end{lemma}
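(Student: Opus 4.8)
\emph{Proof proposal.} The plan is to reduce both estimates to two ingredients from Proposition \ref{pr-y5.3}, namely the Strichartz embedding $\|\vec f\|_{L^p_tL^q_xh^1}\lesssim\|\vec f\|_{U^p_\triangle(h^1)}$ and the inclusion $U^2_\triangle(h^1)\subset U^p_\triangle(h^1)$ (valid since $p>2$), after first converting the moving frequency projector $P_{\xi(t),\cdot}$ into a fixed one via Proposition \ref{re-y5.1}(2). Throughout, the harmless replacement of the Littlewood--Paley block $[i-2,i+2]$ by a slightly fatter one is absorbed into implicit constants exactly as in Lemma 5.7 of \cite{YZ} and Lemma 3.4 of \cite{D}; I will write $b_\alpha:=\|P_{\xi(G_\alpha^i),\,i-3\le\cdot\le i+3}\vec u\|_{U^2_\triangle(h^1;G_\alpha^i\times\mathbb R^2)}$ for brevity.

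For \eqref{eq-y5.9}: partition $G_k^j=\bigcup_\alpha G_\alpha^i$ into the $\sim 2^{j-i}$ level-$i$ subintervals. On a fixed $G_\alpha^i$, Proposition \ref{re-y5.1}(2) at scale $i$ shows that for every $t\in G_\alpha^i$ the symbol of $P_{\xi(t),i}$ is supported where $P_{\xi(G_\alpha^i),\,i-3\le\cdot\le i+3}$ equals one, so $P_{\xi(t),i}\vec u(t)=P_{\xi(t),i}P_{\xi(G_\alpha^i),\,i-3\le\cdot\le i+3}\vec u(t)$; since for each fixed $t$ the operator $P_{\xi(t),i}$ is bounded by $1$ on $L^q_xh^1$, then invoking the Strichartz embedding and $U^2_\triangle\subset U^p_\triangle$ on $G_\alpha^i$ yields $\|P_{\xi(t),i}\vec u\|_{L^p_tL^q_xh^1(G_\alpha^i)}\lesssim b_\alpha$. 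Since $\|\cdot\|_{L^p_t(G_k^j)}^p$ is additive over the partition, $\|P_{\xi(t),i}\vec u\|_{L^p_tL^q_xh^1(G_k^j)}\lesssim\big(\sum_\alpha b_\alpha^p\big)^{1/p}$. The decisive observation is that the definition of $X(\cdot)$ supplies \emph{two} bounds: the weight-$2^{i-j}$ term gives $\sum_\alpha b_\alpha^2\lesssim 2^{j-i}\|\vec u\|^2_{X(G_k^j)}\le 2^{j-i}\|\vec u\|^2_{\tilde X_j(G_k^j)}$, while the $i'=i$ piece of the high-frequency part of $\|\vec u\|^2_{X(G_\alpha^i)}$ gives the pointwise-in-$\alpha$ bound $b_\alpha^2\le\|\vec u\|^2_{X(G_\alpha^i)}\le\|\vec u\|^2_{\tilde X_j(G_k^j)}$ (using $\tilde X_i\le\tilde X_j$). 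Hence $\sum_\alpha b_\alpha^p\le(\max_\alpha b_\alpha)^{p-2}\sum_\alpha b_\alpha^2\lesssim 2^{j-i}\|\vec u\|^p_{\tilde X_j(G_k^j)}$, and taking $p$-th roots gives the claimed factor $2^{(j-i)/p}$.

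For \eqref{eq-y5.9'}: no subdivision is needed. Using Proposition \ref{re-y5.1}(2) for $i\ge j$ and $t\in G_k^j$, write $P_{\xi(t),\ge j}\vec u(t)=\sum_{i\ge j}P_{\xi(t),i}P_{\xi(G_k^j),\,i-3\le\cdot\le i+3}\vec u(t)$; the frequency supports of the $P_{\xi(t),i}$ are finitely overlapping in $i$ uniformly in $t$, so the Littlewood--Paley square-function inequality followed by Minkowski's inequality in $L^q_x$ and then in $L^p_t$ (both exponents $\ge 2$), together with the pointwise-in-$t$ boundedness of $P_{\xi(t),i}$, yields $\|P_{\xi(t),\ge j}\vec u\|_{L^p_tL^q_xh^1(G_k^j)}\lesssim\big(\sum_{i\ge j}\|P_{\xi(G_k^j),\,i-3\le\cdot\le i+3}\vec u\|^2_{L^p_tL^q_xh^1(G_k^j)}\big)^{1/2}$; finally the Strichartz embedding and $U^2_\triangle\subset U^p_\triangle$ on each summand bound the right-hand side by $\big(\sum_{i\ge j}\|P_{\xi(G_k^j),\,i-3\le\cdot\le i+3}\vec u\|^2_{U^2_\triangle(h^1;G_k^j)}\big)^{1/2}$, which is precisely controlled by the high-frequency part of $\|\vec u\|_{X(G_k^j)}$.

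I expect no genuine obstacle here: the estimates are bookkeeping once the $U^p$/Strichartz machinery of Proposition \ref{pr-y5.3} and the frequency-localization Proposition \ref{re-y5.1}(2) are in hand. The one subtlety worth flagging is that in \eqref{eq-y5.9} one must use the pointwise-in-$\alpha$ bound $b_\alpha\lesssim\|\vec u\|_{\tilde X_j}$; relying only on $\sum_\alpha b_\alpha^2\lesssim 2^{j-i}\|\vec u\|^2_{\tilde X_j}$ together with $\ell^2\hookrightarrow\ell^p$ would produce the lossy factor $2^{(j-i)/2}$ instead of the sharp $2^{(j-i)/p}$, and it is the sharp form that is needed to close the long-time Strichartz bootstrap in Section 5.
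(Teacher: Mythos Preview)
Your proof is correct and follows exactly the approach the paper intends: the paper does not give its own argument but defers to Lemma~5.7 of \cite{YZ}, and your write-up is precisely a detailed $h^1$-level execution of that lemma via Proposition~\ref{re-y5.1}(2) and Proposition~\ref{pr-y5.3}. Your remark that the sharp $2^{(j-i)/p}$ requires combining the $\ell^2$-in-$\alpha$ bound from the low-frequency part of $X(G_k^j)$ with the pointwise-in-$\alpha$ bound from $X(G_\alpha^i)\le\tilde X_j$ is the key observation, and it matches the standard argument in \cite{D,YZ}.
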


\section{Long time Strichartz estimate and Frequency localized interaction Morawetz estimate}
In this section, we discuss long time Strichartz estimate and frequency localized interaction Morawetz estimate. These methods are developed in \cite{D} and are essential to prove Theorem \ref{mainthm}.

In contrast to the long time Strichartz estimate established  in Section 5 in \cite{YZ}, we want to establish the long time Strichartz estimate on the``$h^1$" level, instead of on the ``$l^2$" level, at the discrete direction of ``$j$". This is one of the main technical differences as explained in Section 1. We follow the argument which is used to solve the scattering theory for 2d mass-critical NLS by Dodson in the Section 5 of \cite{D}. The similar nonlinear estimate $\|\vec{F}(\vec{u})\|_{h^1}\lesssim \|\vec{u}\|^3_{h^1}$ on the ``$h^1$" level at the discrete direction of ``$j$" guarantees that  just by  changing  $l^2$ norm in Section 6 of \cite{YZ} into $h^1$ norm at the discrete direction of ``$j$",  we can get the three corresponding  bilinear estimates at the cost of some loss of symmetry, while we can pay for the cost by  those cancellations in last section. the distinctive terms which are caused when we change  $l^2$ norm in Section of \cite{YZ} into $h^1$ norm at the discrete direction of ``$j$" appear in the estimates of $F_{2,j'}$  (6.33) in Section of \cite{YZ}, In the following we will pick out and deal with the distinctive terms in the estimates of $F_{2,j'}$ when we change  $l^2$ norm in Section 6 of \cite{YZ} into $h^1$ norm at the discrete direction of ``$j$".
\subsection{Three bilinear Strichartz estimates and Long time Strichartz estimate.}
\begin{equation}\label{eq-y5.90}
\begin{split}
F_{2,j'}(t,y)=& \Im\big[ 2\bar{u}^l_{j'} P_{\xi(t),\leq l_2} \big(\sum_{\mathcal{R}(j')}u^h_{j'_1}\bar{u}^h_{j'_2}u^l_{j'_3}\big) + 2(P_{\xi(t),\leq l_2}\bar{u}^h_{j'}) P_{\xi(t),\leq l_2} \big(\sum_{\mathcal{R}(j')}u^h_{j'_1}\bar{u}^l_{j'_2}u^l_{j'_3}\big)\\
&+ \bar{u}^l_{j'} P_{\xi(t),\leq l_2} \big(\sum_{\mathcal{R}(j')}u^h_{j'_1}\bar{u}^l_{j'_2}u^h_{j'_3}\big) + (P_{\xi(t),\leq l_2}\bar{u}^h_{j'}) P_{\xi(t),\leq l_2} \big(\sum_{\mathcal{R}(j')}u^l_{j'_1}\bar{u}^h_{j'_2}u^l_{j'_3}\big)\big]\\
&=:\uppercase\expandafter{\romannumeral1}+\uppercase\expandafter{\romannumeral2},
\end{split}
\end{equation}
where
$$\uppercase\expandafter{\romannumeral1}:=\Im\big[ 2\bar{u}^l_{j'} P_{\xi(t),\leq l_2} \big(\sum_{\mathcal{R}(j')}u^h_{j'_1}\bar{u}^h_{j'_2}u^l_{j'_3}\big) + 2(P_{\xi(t),\leq l_2}\bar{u}^h_{j'}) P_{\xi(t),\leq l_2} \big(\sum_{\mathcal{R}(j')}u^h_{j'_1}\bar{u}^l_{j'_2}u^l_{j'_3}\big)\big]$$
and
$$\uppercase\expandafter{\romannumeral2}:= \Im\big[\bar{u}^l_{j'} P_{\xi(t),\leq l_2} \big(\sum_{\mathcal{R}(j')}u^h_{j'_1}\bar{u}^l_{j'_2}u^h_{j'_3}\big) + (P_{\xi(t),\leq l_2}\bar{u}^h_{j'}) P_{\xi(t),\leq l_2} \big(\sum_{\mathcal{R}(j')}u^l_{j'_1}\bar{u}^h_{j'_2}u^l_{j'_3}\big)\big].$$
We can estimate ``$\uppercase\expandafter{\romannumeral2}$" as to (6.38) in \cite{YZ}  when we change  $l^2$ norm  into $h^1$ norm at the discrete direction of ``$j$". So it remains ``$\uppercase\expandafter{\romannumeral1}$" for us to deal with. Observe that
\begin{equation}
\begin{split}
  & \Im\Big(\sum\limits_{\left(j, j_{1}, j_{2}, j_{3}\right) \in \mathcal{R}}\langle j\rangle^{2}[ (P_{\xi(t),\leq l_2}\bar{u}^h_{j})  u^l_{j_{1}} \bar{u}^l_{j_{2}} (P_{\xi(t),\leq l_2}u^h_{j_{3}})+\bar{u}^l_{j}  (P_{\xi(t),\leq l_2}u^h_{j_{1}}) (P_{\xi(t),\leq l_2}\bar{u}^h_{j_{2}}) u^l_{j_{3}}]\Big) \\
  =&\Im\Big(\sum\limits_{\left(j, j_{1}, j_{2}, j_{3}\right) \in \mathcal{R}}(\langle j\rangle^{2}+\langle j_2\rangle^{2})[ (P_{\xi(t),\leq l_2}\bar{u}^h_{j})  u^l_{j_{1}} \bar{u}^l_{j_{2}} (P_{\xi(t),\leq l_2}u^h_{j_{3}})]\Big)\\
  =&0. \\
\end{split}
\end{equation}

Therefore,
\begin{equation}\label{eq-ykl5.91}
\begin{split}
&2\Im\big[\sum_{j'} \langle j'\rangle^{2}  \bar{u}^l_{j'} P_{\xi(t),\leq l_2} \big(\sum_{\mathcal{R}(j')}u^h_{j'_1}\bar{u}^h_{j'_2}u^l_{j'_3}\big) + \sum_{j'} \langle j'\rangle^{2}  (P_{\xi(t),\leq l_2}\bar{u}^h_{j'}) P_{\xi(t),\leq l_2} \big(\sum_{\mathcal{R}(j')}u^h_{j'_1}\bar{u}^l_{j'_2}u^l_{j'_3}\big)\big]\\
=&2\Im\big[\sum_{\left(j', j'_{1}, j'_{2}, j'_{3}\right) \in \mathcal{R}} \langle j'\rangle^{2} \big(\bar{u}^l_{j'}P_{\xi(t),\leq l_2}(u^h_{j'_1}\bar{u}^h_{j'_2}u^l_{j'_3}) -\bar{u}^l_{j'} u^l_{j'_3} ( P_{\xi(t),\leq l_2}u^h_{j'_1})(P_{\xi(t),\leq l_2}\bar{u}^h_{j'_2})\big)\big]\\
&+2\Im\big[\sum_{\left(j', j'_{1}, j'_{2}, j'_{3}\right) \in \mathcal{R}} \langle j'\rangle^{2} \big((P_{\xi(t),\leq l_2}\bar{u}^h_{j'})P_{\xi(t),\leq l_2} (u^h_{j'_1}\bar{u}^l_{j'_2}u^l_{j'_3}) - (P_{\xi(t),\leq l_2}\bar{u}^h_{j'})  u^l_{j'_1} \bar{u}^l_{j'_2} (P_{\xi(t),\leq l_2}u^h_{j'_3}) \big)\big]\\
=:&\eqref{eq-ykl5.91}(a)+\eqref{eq-ykl5.91}(b).
\end{split}
\end{equation}
\eqref{eq-ykl5.91}(b) can be easily estimated just like (6.34)(b) in \cite{YZ}. So we only need to estimate \eqref{eq-ykl5.91}(a) hereinafter.
\begin{equation}\label{eq-ykl5.91'}
\begin{split}
\eqref{eq-ykl5.91}(a):=&2\Im\big[\sum_{\left(j', j'_{1}, j'_{2}, j'_{3}\right) \in \mathcal{R}} \langle j'\rangle^{2} \big(\bar{u}^l_{j'}P_{\xi(t),\leq l_2}(u^h_{j'_1}\bar{u}^h_{j'_2}u^l_{j'_3}) -\bar{u}^l_{j'} u^l_{j'_3} ( P_{\xi(t),\leq l_2}u^h_{j'_1})(P_{\xi(t),\leq l_2}\bar{u}^h_{j'_2})\big)\big]\\
=&2\Im\big[\sum_{\left(j', j'_{1}, j'_{2}, j'_{3}\right) \in \mathcal{R}} \langle j'\rangle^{2} \big(\bar{u}^l_{j'}P_{\xi(t),\leq l_2}(u^h_{j'_1}\bar{u}^h_{j'_2}u^l_{j'_3}) - \bar{u}^l_{j'} u^l_{j'_3} ( P_{\leq l_2}(u^h_{j'_1}\bar{u}^h_{j'_2}))\big)\big]\\
&+2\Im\big[\sum_{\left(j', j'_{1}, j'_{2}, j'_{3}\right) \in \mathcal{R}} \langle j'\rangle^{2} \big(\bar{u}^l_{j'} u^l_{j'_3} ( P_{\leq l_2}(u^h_{j'_1}\bar{u}^h_{j'_2}))-
\bar{u}^l_{j'} u^l_{j'_3} ( P_{\xi(t),\leq l_2}u^h_{j'_1})(P_{\xi(t),\leq l_2}\bar{u}^h_{j'_2})\big)\big]\\
=:&\eqref{eq-ykl5.91'}(a)'+\eqref{eq-ykl5.91'}(c).
\end{split}
\end{equation}
The only difference between \eqref{eq-ykl5.91'}$(a)'$ and (6.34)(a) in \cite{YZ} is that the summation of ``$j'$" goes from ``$l^2$" level to the ``$h^1$" level. So by using the nonlinear estimate $\|\vec{F}(\vec{u})\|_{h^1}\lesssim \|\vec{u}\|^3_{h^1}$ at the discrete direction of ``$j'$" and repeating the process of estimating of (6.34)(a) in \cite{YZ}, we can similarly estimate  \eqref{eq-ykl5.91'}$(a)'$. Next we estimate \eqref{eq-ykl5.91'}(c).

\begin{equation}\label{eq-ykl5.92'}
\begin{split}
&\eqref{eq-ykl5.91'}(c)\\
:=&2\Im\big[\sum_{\left(j', j'_{1}, j'_{2}, j'_{3}\right) \in \mathcal{R}} \langle j'\rangle^{2} \big(\bar{u}^l_{j'} u^l_{j'_3} ( P_{\leq l_2}(u^h_{j'_1}\bar{u}^h_{j'_2}))-
\bar{u}^l_{j'} u^l_{j'_3} ( P_{\xi(t),\leq l_2}u^h_{j'_1})(P_{\xi(t),\leq l_2}\bar{u}^h_{j'_2})\big)\big]\\
=&2\Im\big[\sum_{\left(j', j'_{1}, j'_{2}, j'_{3}\right) \in \mathcal{R}} \langle j'\rangle^{2} \bar{u}^l_{j'} u^l_{j'_3}\big( ( P_{\leq l_2}(u^h_{j'_1}\bar{u}^h_{j'_2}))-
( P_{\xi(t),\leq l_2}u^h_{j'_1})(P_{\xi(t),\leq l_2}\bar{u}^h_{j'_2})\big)\big]\\
=&2\Im\big[\sum_{\left(j', j'_{1}, j'_{2}, j'_{3}\right) \in \mathcal{R}} \langle j'\rangle^{2} \bar{u}^l_{j'} u^l_{j'_3}\big( \int e^{ix\xi}\int ( \phi(\frac{\xi}{2^{l_2}})- \phi(\frac{\xi-\eta}{2^{l_2}})\phi(\frac{\eta}{2^{l_2}}) )\hat{u}^h_{j'_1}(\xi-\eta+\xi(t))\bar{\hat{u}}^h_{j'_2}(-\eta+\xi(t))d\eta d\xi\big)\big]\\
=:&\uppercase\expandafter{\romannumeral1}+\uppercase\expandafter{\romannumeral2}+\uppercase\expandafter{\romannumeral3},
\end{split}
\end{equation}
where
\begin{equation}\label{eq-ykl5.93'}
\begin{split}
\uppercase\expandafter{\romannumeral1}:=&2\Im\big[\sum_{\left(j', j'_{1}, j'_{2}, j'_{3}\right) \in \mathcal{R}} \langle j'\rangle^{2} \bar{u}^l_{j'} u^l_{j'_3}\big( \int e^{ix\xi}\int  \phi(\frac{\xi}{2^{l_2}})\psi(\frac{\xi-\eta}{2^{l_2}})\phi(\frac{\eta}{2^{l_2}}) \hat{u}^h_{j'_1}(\xi-\eta+\xi(t))\bar{\hat{u}}^h_{j'_2}(-\eta+\xi(t))d\eta d\xi\big)\big]\\
=&2\Im\big[\sum_{\left(j', j'_{1}, j'_{2}, j'_{3}\right) \in \mathcal{R}} \langle j'\rangle^{2} \bar{u}^l_{j'} u^l_{j'_3}P_{\leq l_2}\big( P_{\xi(t),l_2}u^h_{j'_1}P_{\xi(t),\leq l_2}\bar{u}^h_{j'_2}\big)\big].
\end{split}
\end{equation}
\begin{equation}\label{eq-ykl5.94'}
\begin{split}
\uppercase\expandafter{\romannumeral2}:=&2\Im\big[\sum_{\left(j', j'_{1}, j'_{2}, j'_{3}\right) \in \mathcal{R}} \langle j'\rangle^{2} \bar{u}^l_{j'} u^l_{j'_3}\big( \int e^{ix\xi}\int  \phi(\frac{\xi}{2^{l_2}})\phi(\frac{\xi-\eta}{2^{l_2}})\psi(\frac{\eta}{2^{l_2}}) \hat{u}^h_{j'_1}(\xi-\eta+\xi(t))\bar{\hat{u}}^h_{j'_2}(-\eta+\xi(t))d\eta d\xi\big)\big]\\
=&2\Im\big[\sum_{\left(j', j'_{1}, j'_{2}, j'_{3}\right) \in \mathcal{R}} \langle j'\rangle^{2} \bar{u}^l_{j'} u^l_{j'_3}P_{\leq l_2}\big( P_{\xi(t),\leq l_2}u^h_{j'_1}P_{\xi(t),l_2}\bar{u}^h_{j'_2}\big)\big].
\end{split}
\end{equation}
\begin{equation}\label{eq-ykl5.95'}
\begin{split}
\uppercase\expandafter{\romannumeral3}:=&2\Im\big[\sum_{\left(j', j'_{1}, j'_{2}, j'_{3}\right) \in \mathcal{R}} \langle j'\rangle^{2} \bar{u}^l_{j'} u^l_{j'_3}\big( \int e^{ix\xi}\int  \phi(\frac{\xi}{2^{l_2}})(1-\phi(\frac{\xi-\eta}{2^{l_2}}))(1-\phi(\frac{\eta}{2^{l_2}}))\\
&\quad\times \hat{u}^h_{j'_1}(\xi-\eta+\xi(t))\bar{\hat{u}}^h_{j'_2}(-\eta+\xi(t))d\eta d\xi\big)\big]\\
=&2\Im\big[\sum_{\left(j', j'_{1}, j'_{2}, j'_{3}\right) \in \mathcal{R}} \langle j'\rangle^{2} \bar{u}^l_{j'} u^l_{j'_3}P_{\leq l_2}\big( P_{\xi(t),\geq l_2}u^h_{j'_1}P_{\xi(t),\geq  l_2}\bar{u}^h_{j'_2}\big)\big].
\end{split}
\end{equation}
The estimate of $\uppercase\expandafter{\romannumeral1}$ and $\uppercase\expandafter{\romannumeral2}$ are similar, we just give the estimate of  $\uppercase\expandafter{\romannumeral1}$ below.

Recall $u_{j'} =u^{h}_{j'} + u^l_{j'}$ and $u^l_{j'}=P_{\xi(t),\leq l_2-5}u_{j'}$.
$supp \hat{u}_{j'_1}\subset \{2^{l_2-1}\leq|\xi-\eta-\xi(t)|\leq 2^{l_2+1}\}\subset \{2^{l_2-2}\leq|\xi-\eta-\xi(G_{\beta}^{l_2})|\leq 2^{l_2+2}\}$ and $supp (\bar{u}_{j'}u_{j'_3})^{\wedge}\subset \{|\xi|\leq 2^{l_2-4}\}$ infer that $supp \hat{\bar{u}}_{j'_2}\subset \{|\eta-\xi(G_{\beta}^{l_2})|\geq 2^{l_2-3}\}$, by the bilinear estimate Proposition \ref{pr-y5.5},  we have
 \begin{equation}\label{eq-ykl5.92bc}
\begin{split}
& 2^{l_2-i}\| \uppercase\expandafter{\romannumeral1}\|_{L^1_{t,x}(G_{\beta}^{l_2}\times \mathbb{R}^2)}\\
\lesssim& 2^{l_2-i}\Big\|\|\vec{u}^l\|_{h^1}\|\vec{u}_{\xi(G_{\beta}^{l_2}),\geq l_2-2}^h\|_{h^1}\Big\|_{L^{2}_{t}L^2_x(G_{\beta}^{l_2}\times \mathbb{R}^2)} \Big\|\|\vec{u}^l\|_{h^1}\|\vec{u}_{\xi(G_{\beta}^{l_2}),\geq l_2-3}^h\|_{h^1}\Big\|_{L^{2}_{t}L^2_x(G_{\beta}^{l_2}\times \mathbb{R}^2)}\\
\lesssim&2^{l_2-i}\|\vec{u}\|^4_{\tilde{X}_{i}(G_{\alpha}^{i})}.
\end{split}
\end{equation}

Finally we turn to  $\uppercase\expandafter{\romannumeral3}$. 
We can use the bilinear estimate to deal with $\uppercase\expandafter{\romannumeral3}$,
\begin{equation}\label{eq-ykl5.92c}
\begin{split}
& 2^{l_2-i}\|\uppercase\expandafter{\romannumeral3}\|_{L^1_{t,x}(G_{\beta}^{l_2}\times \mathbb{R}^2)}\\
\lesssim& 2^{l_2-i}\Big\|\|\vec{u}^l\|_{h^1}\|\vec{u}_{\xi(G_{\beta}^{l_2}),\geq l_2-2}^h\|_{h^1}\Big\|_{L^{2}_{t}L^2_x(G_{\beta}^{l_2}\times \mathbb{R}^2)} \Big\|\|\vec{u}^l\|_{h^1}\|\vec{u}_{\xi(G_{\beta}^{l_2}),\geq l_2-2}^h\|_{h^1}\Big\|_{L^{2}_{t}L^2_x(G_{\beta}^{l_2}\times \mathbb{R}^2)}\\
\lesssim&2^{l_2-i}\|\vec{u}\|^4_{\tilde{X}_{i}(G_{\alpha}^{i})}.
\end{split}
\end{equation}
Now we have finished the estimates of the distinctive terms.
Therefore by going on repeating the arguments in section 6 in \cite{YZ}, we have the following three bilinear Strichartz estimates.
\begin{theorem}[First bilinear Strichartz estimate]\label{th-y5.16}
For $a\in\{-1,1\}$, suppose that $\vec{v}_0=\{v_{0,j}\}_{j\in \mathbb{Z}^2}\in L_x^2h^{a} (\mathbb{R}^2\times \mathbb{Z}^2 )$ and $\hat{v}_{0,j}$ is supported on
$\{\xi:2^{i-5}\leq |\xi - \xi(G_{\alpha}^{i})|\leq 2^{i+5} \}$ for every $j\in \mathbb{Z}^2$.  Also suppose $J_l\subset G^i_{\alpha}$ is a small interval and $|\xi(t)-\xi(G^i_{\alpha})|\leq 2^{i-10}$ for all $t\in G^i_{\alpha}$. Then for any $0\leq l_2 \leq i-10$,
\begin{equation}\label{eq-y5.51}
\begin{split}
  &\left\| \|e^{it\triangle}\vec{v}_0\|_{h^{a}} \|P_{\xi(t),\leq l_2} \vec{u}\|_{h^{1}} \right\|^2_{L_t^2L_x^2(J_l\times\mathbb{R}^2)} \lesssim 2^{l_2-i}\|\vec{v}_0\|^2_{L_x^2h^{a}(\mathbb{R}^2\times \mathbb{Z}^2)}  \\
  +& 2^{-i}\|\vec{v}_0\|^2_{L_x^2h^{a}(\mathbb{R}^2\times \mathbb{Z}^2)} \big(\int_{J_l} |\xi'(t)|\sum_{l_1\leq l_2}2^{\frac{l_1-l_2}{2}} \|P_{\xi(t),l_2-3 \leq \cdot\leq l_2+3} \vec{u}\|_{L_x^2h^{1}(\mathbb{R}^2\times \mathbb{Z}^2)} \|P_{\xi(t), l_1} \vec{u}\|_{L_x^2h^{1}(\mathbb{R}^2\times \mathbb{Z}^2)} dt\big).
\end{split}
\end{equation}
The same estimate also holds when $P_{\xi(t),\leq l_2}$ is replaced by $P_{\xi(t), l_2}$.
\end{theorem}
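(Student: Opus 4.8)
The plan is to reproduce, at the discrete $h^{1}$ level, the proof of the first adapted-frame bilinear Strichartz estimate of Dodson (Section~5 of \cite{D}), which is the estimate that \cite{YZ} established at the $l^{2}$ level. This transplant is possible because all the required ingredients survive with the weight $\langle j\rangle^{2}$: the building-block bilinear Strichartz estimates (Propositions~\ref{pr-y5.4} and \ref{pr-y5.5} and the Remarks after them, in their $h^{a},h^{b}$ forms); the $h^{1}$-level nonlinear bound $\|\vec{F}(\vec{u})\|_{h^{1}}\lesssim\|\vec{u}\|_{l^{2}}^{\delta_{2}}\|\vec{u}\|_{h^{1}}^{3-\delta_{2}}$ coming from Lemma~\ref{es:resonant} by interpolation; and the cancellation identities of Section~4, which neutralize precisely the symmetry-breaking ``distinctive'' $F_{2,j'}$-type terms created by the weight, as carried out above. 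The one algebraic input I would record first is that, at each fixed time, $P_{\xi(t),\le l_{2}}$ is an $x$-Fourier multiplier and hence commutes with $\triangle$, so
\[
(i\partial_{t}+\triangle)\bigl(P_{\xi(t),\le l_{2}}\vec{u}\bigr)=P_{\xi(t),\le l_{2}}\vec{F}(\vec{u})+i\bigl(\partial_{t}P_{\xi(\cdot),\le l_{2}}\bigr)\vec{u},
\]
and that $\partial_{t}P_{\xi(\cdot),\le l_{2}}$ has symbol $-2^{-l_{2}}\xi'(t)\cdot(\nabla\phi)\bigl(2^{-l_{2}}(\,\cdot\,-\xi(t))\bigr)$, so it is $O(2^{-l_{2}}|\xi'(t)|)$ and frequency-localized to $\{|\,\cdot\,-\xi(t)|\sim 2^{l_{2}}\}$; this is exactly the source of the factor $|\xi'(t)|$ and of the projection $P_{\xi(t),l_{2}-3\le\cdot\le l_{2}+3}$ on the right-hand side.

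Writing $J_{l}=[t_{0},t_{1}]$ and $w:=P_{\xi(t),\le l_{2}}\vec{u}$, I would next invoke the Duhamel formula associated with the above equation,
\begin{align*}
w(t)={}&e^{i(t-t_{0})\triangle}P_{\xi(t_{0}),\le l_{2}}\vec{u}(t_{0})\\
&-i\int_{t_{0}}^{t}e^{i(t-s)\triangle}\Bigl(P_{\xi(s),\le l_{2}}\vec{F}(\vec{u})+i\bigl(\partial_{s}P_{\xi(\cdot),\le l_{2}}\bigr)\vec{u}\Bigr)(s)\,ds,
\end{align*}
and insert it into $\bigl\|\,\|e^{it\triangle}\vec{v}_{0}\|_{h^{a}}\|w\|_{h^{1}}\,\bigr\|_{L^{2}_{t,x}(J_{l})}$, splitting into a linear, a nonlinear, and a frame-motion contribution. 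For the first two the $\vec{u}$-side factor is frequency-supported in $\{|\,\cdot\,-\xi(G_{\alpha}^{i})|\le 2^{l_{2}}+2^{i-10}\}\subset\{|\,\cdot\,-\xi(G_{\alpha}^{i})|\le 2^{i-9}\}$ (using $l_{2}\le i-10$ and $|\xi(t)-\xi(G_{\alpha}^{i})|\le 2^{i-10}$), which is separated by $\sim 2^{i}$ from the support $\{2^{i-5}\le|\,\cdot\,-\xi(G_{\alpha}^{i})|\le 2^{i+5}\}$ of $e^{it\triangle}\vec{v}_{0}$; hence the $h^{1}$-bilinear Strichartz estimate applies with the two frequency scales $\sim 2^{l_{2}}$ and $\sim 2^{i}$, giving a factor $2^{(l_{2}-i)/2}$ which, upon squaring the left-hand norm, becomes $2^{l_{2}-i}$. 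The remaining scalar factors are $\|P_{\xi(t_{0}),\le l_{2}}\vec{u}(t_{0})\|_{L^{2}_{x}h^{1}}\lesssim\|\vec{u}(t_{0})\|_{L^{2}_{x}h^{1}}\lesssim 1$ (the critical element has bounded mass) and $\|P_{\xi(s),\le l_{2}}\vec{F}(\vec{u})\|_{DU^{2}_{\triangle}(h^{1};J_{l})}\lesssim\|\vec{F}(\vec{u})\|_{L^{4/3}_{t,x}h^{1}(J_{l})}\lesssim\|\vec{u}\|_{L^{4}_{t,x}l^{2}(J_{l})}^{\delta_{2}}\|\vec{u}\|_{L^{4}_{t,x}h^{1}(J_{l})}^{3-\delta_{2}}\lesssim 1$ by Lemma~\ref{es:resonant} and Proposition~\ref{re-y5.1}(3); these together reproduce the first right-hand term $2^{l_{2}-i}\|\vec{v}_{0}\|_{L^{2}_{x}h^{a}}^{2}$.

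For the frame-motion contribution the forcing is $i(\partial_{s}P_{\xi(\cdot),\le l_{2}})\vec{u}$, bounded in $L^{2}_{x}h^{1}$ by $2^{-l_{2}}|\xi'(s)|\,\|P_{\xi(s),l_{2}-3\le\cdot\le l_{2}+3}\vec{u}(s)\|_{L^{2}_{x}h^{1}}$; expanding the low factor $P_{\xi(t),\le l_{2}}\vec{u}=\sum_{l_{1}\le l_{2}}P_{\xi(t),l_{1}}\vec{u}$, pairing each dyadic piece with $e^{it\triangle}\vec{v}_{0}$ through the bilinear estimate (which supplies gains $2^{(l_{1}-i)/2}$ and $2^{(l_{2}-i)/2}$ on the two factors), and collecting the powers $2^{-l_{2}}\cdot 2^{(l_{1}-i)/2}\cdot 2^{(l_{2}-i)/2}=2^{-i}\,2^{(l_{1}-l_{2})/2}$ produces precisely the prefactor $2^{-i}$, the weight $2^{(l_{1}-l_{2})/2}$, and the surviving integral $\int_{J_{l}}|\xi'(s)|\,\|P_{\xi(s),l_{2}-3\le\cdot\le l_{2}+3}\vec{u}\|_{L^{2}_{x}h^{1}}\|P_{\xi(s),l_{1}}\vec{u}\|_{L^{2}_{x}h^{1}}\,ds$ of the second term; the matching of the $\vec{u}$-pointwise factor requires a little care, combining the $DU^{2}$--$V^{2}$ duality with the bilinear estimate, but the power counting above is the operative point. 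The variant with $P_{\xi(t),l_{2}}$ in place of $P_{\xi(t),\le l_{2}}$ is obtained verbatim, since $\partial_{t}P_{\xi(\cdot),l_{2}}$ has the same symbol support. The step I expect to be the genuine obstacle --- and the one new relative to \cite{YZ} --- is not any single estimate but keeping the entire scheme consistent at $h^{1}$ level: carrying the weight $\langle j\rangle^{2}$ through every trilinear and quadrilinear expression destroys the rotation-of-indices symmetry that makes the $l^{2}$ argument of \cite{YZ} transparent, and one must check that, after applying the Section~4 cancellations to the $F_{2,j'}$ distinctive terms (as above), no further symmetry-breaking term survives when the rest of Section~6 of \cite{YZ} is repeated.
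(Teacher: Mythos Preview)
Your stated plan --- ``reproduce, at the $h^{1}$ level, the proof of the first adapted-frame bilinear Strichartz estimate of Dodson'' --- is exactly what the paper does: it repeats Section~6 of \cite{YZ} with $l^{2}$ replaced by $h^{1}$, invoking the Section~4 cancellations for the $F_{2,j'}$ terms. But the concrete sketch you then give is \emph{not} that argument. The proof in \cite{D,YZ} (and hence here) is an interaction-Morawetz computation: one introduces a functional of the shape
\[
M(t)=\sum_{j,j'}\langle j\rangle^{2a}\langle j'\rangle^{2}\int_{\mathbb{R}^{2}}\int_{\mathbb{R}^{2}}\bigl|(P_{\xi(t),\le l_{2}}u_{j'})(t,y)\bigr|^{2}\,\frac{x-y}{|x-y|}\cdot\Im\bigl[\overline{e^{it\triangle}v_{0,j}}\,\nabla e^{it\triangle}v_{0,j}\bigr](t,x)\,dx\,dy,
\]
differentiates in $t$, and integrates over $J_{l}$. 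The positive commutator produces the squared left-hand side; the error terms are $\sup|M(t)|$ together with the contributions of $(i\partial_{t}+\triangle)(P_{\xi(t),\le l_{2}}\vec{u})$, which split into the nonlinear pieces $F_{1,j'},F_{2,j'},\dots$ (handled exactly as in the paper's Section~5.1) and the frame-motion piece $(\partial_{t}P_{\xi(\cdot),\le l_{2}})\vec{u}$.

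Your Duhamel route does not recover the stated right-hand side, and the step where you claim it does is the gap. After replacing $w=P_{\xi(t),\le l_{2}}\vec{u}$ by its Duhamel expansion there is \emph{one} bilinear pairing --- $e^{it\triangle}\vec{v}_{0}$ against the frame-motion Duhamel term, which lives at scale $\sim 2^{l_{2}}$ --- so the only gain available is $2^{(l_{2}-i)/2}$, and upon squaring you obtain
\[
2^{l_{2}-i}\|\vec{v}_{0}\|_{L^{2}h^{a}}^{2}\Bigl(\int_{J_{l}}2^{-l_{2}}|\xi'(s)|\,\|P_{\xi(s),l_{2}-3\le\cdot\le l_{2}+3}\vec{u}\|_{L^{2}h^{1}}\,ds\Bigr)^{2},
\]
a squared single integral, not the single integral $\int_{J_{l}}|\xi'|\sum_{l_{1}\le l_{2}}2^{(l_{1}-l_{2})/2}\|P_{l_{2}}\vec{u}\|\,\|P_{l_{1}}\vec{u}\|$ of \eqref{eq-y5.51}. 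There is no second factor at scale $2^{l_{1}}$ to pair with $\vec{v}_{0}$, so your ``gains $2^{(l_{1}-i)/2}$ and $2^{(l_{2}-i)/2}$ on the two factors'' does not arise; the identity $2^{-l_{2}}\cdot 2^{(l_{1}-i)/2}\cdot 2^{(l_{2}-i)/2}=2^{-i}2^{(l_{1}-l_{2})/2}$ is arithmetically correct but has no bilinear origin in this scheme. The two $\vec{u}$-norms on the right of \eqref{eq-y5.51} come instead from the Morawetz functional itself: $M(t)$ already carries $|P_{\xi(t),\le l_{2}}u_{j'}|^{2}$, so differentiating one copy via the equation yields the frame-motion factor $\|P_{\xi(t),l_{2}-3\le\cdot\le l_{2}+3}\vec{u}\|_{L^{2}h^{1}}$ while the surviving copy is decomposed dyadically as $\sum_{l_{1}\le l_{2}}P_{\xi(t),l_{1}}\vec{u}$, and the $2^{-i}$ comes from the kernel $\frac{x-y}{|x-y|}$ acting against $\Im[\bar v\nabla v]$ at frequency $\sim 2^{i}$. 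That is the mechanism you need to reproduce.
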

\begin{theorem}[Second bilinear Strichartz estimate]\label{th-y5.17}
For $a\in\{-1,1\}$, suppose that $\vec{v}_0=\{v_{0,j}\}_{j\in \mathbb{Z}^2}\in L_x^2h^{a} (\mathbb{R}^2\times \mathbb{Z}^2 )$, $\hat{v}_{0,j}$ supported on
$\{\xi:2^{i-5}\leq |\xi - \xi(G_{\alpha}^{i})|\leq 2^{i+5} \}$ for every $j\in \mathbb{Z}^2$.  Then for any $0\leq l_2 \leq i-10$, $G_{\beta}^{l_2}\subset G^i_{\alpha}$,
\begin{equation}\label{eq-y5.67}
\left\| \|e^{it\triangle}\vec{v}_0\|_{h^{a}} \cdot\|P_{\xi(t),\leq l_2} \vec{u}\|_{h^{1}} \right\|^2_{L_t^2L_x^2(G_{\beta}^{l_2} \times\mathbb{R}^2)} \lesssim \|\vec{v}_0\|^2_{L_x^2h^{a}(\mathbb{R}^2\times \mathbb{Z}^2)}(1+\|\vec{u}\|^4_{\tilde{X}_{i}(G_{\alpha}^{i}\times\mathbb{R}^2)}).
\end{equation}
\end{theorem}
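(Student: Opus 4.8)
The plan is to deduce Theorem~\ref{th-y5.17} from the first bilinear estimate, Theorem~\ref{th-y5.16}, by summing over the small intervals $J_l$ that make up $G_\beta^{l_2}$, exactly along the lines of the parallel step in \cite{D} (and of its one discrete dimensional version in \cite{YZ}). Since $G_\beta^{l_2}\subset G_\alpha^i$ and $0\le l_2\le i-10$, every $J_l\subset G_\beta^{l_2}$ is a small interval lying in $G_\alpha^i$; moreover the side condition $|\xi(t)-\xi(G_\alpha^i)|\le 2^{i-10}$ on $G_\alpha^i$ required by Theorem~\ref{th-y5.16} follows from Proposition~\ref{re-y5.1}(2) once $\epsilon_1,\epsilon_2,\epsilon_3$ are fixed as small as specified before Definition~\ref{de-y5.1} (the relevant factor $\epsilon_3\epsilon_1^{-1/2}$ being $\ll 1$). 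One then square-sums the left-hand side of Theorem~\ref{th-y5.17} over the $J_l\subset G_\beta^{l_2}$, using the $\ell^2$-additivity in time of $L^2_tL^2_x$, and inserts the bound of Theorem~\ref{th-y5.16} into each summand; this splits the quantity into a ``main'' part built from the terms $2^{l_2-i}\|\vec v_0\|^2_{L^2_xh^a}$ and an ``error'' part built from the terms $2^{-i}\|\vec v_0\|^2_{L^2_xh^a}\int_{J_l}|\xi'(t)|\sum_{l_1\le l_2}2^{(l_1-l_2)/2}\|P_{\xi(t),l_2-3\le\cdot\le l_2+3}\vec u\|_{L^2_xh^1}\|P_{\xi(t),l_1}\vec u\|_{L^2_xh^1}\,dt$.

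For the main part one cannot just sum $2^{l_2-i}\|\vec v_0\|^2_{L^2_xh^a}$ over the $O(2^{l_2})$ small intervals of $G_\beta^{l_2}$, as $l_2$ may be comparable to $i$. Instead I would re-expand $P_{\xi(t),\le l_2}\vec u=\sum_{0\le m\le l_2}P_{\xi(t),m}\vec u$ and regroup the small intervals into the frequency-scale blocks $G_\gamma^m\subset G_\beta^{l_2}$, so that the bilinear Strichartz estimate of Proposition~\ref{pr-y5.5} (with separation $2^m\ll 2^i$) applies on each $G_\gamma^m$ and produces $2^{m-i}\|\vec v_0\|^2_{L^2_xh^a}\|P_{\xi(G_\gamma^m),m-2\le\cdot\le m+2}\vec u\|^2_{U^2_\triangle(h^1;G_\gamma^m\times\mathbb{R}^2)}$; summing these over $G_\gamma^m\subset G_\beta^{l_2}$ against the geometric weights $2^{m-l_2}$ that are built into $\|\vec u\|^2_{X(G_\beta^{l_2})}\le\|\vec u\|^2_{\tilde{X}_i}$ (Definition~\ref{de-y5.2}), and using the surplus gain $2^{m-i}\le 2^{-10}$ together with the almost-orthogonality of the Littlewood--Paley pieces to dispose of the $m$-sum without a logarithmic loss, one bounds this contribution by $\lesssim\|\vec v_0\|^2_{L^2_xh^a}\|\vec u\|^2_{\tilde{X}_i}$. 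Separately, the part of $\vec u$ at frequencies $\lesssim N(t)$ (not registered by the $\tilde{X}_i$-norm) is handled via its conserved $L^\infty_tL^2_xh^1$-mass $\lesssim m_0^{1/2}$ combined with $\sum_{J_l\subset G_\beta^{l_2}}N(J_l)\lesssim 2^{l_2}\epsilon_3$ from Proposition~\ref{re-y5.1}(1) and the bilinear gain, which is what produces the constant $1$. Thus the main part is $\lesssim\|\vec v_0\|^2_{L^2_xh^a}(1+\|\vec u\|^2_{\tilde{X}_i})$.

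For the error part I would insert $|\xi'(t)|\lesssim N(t)^3$ (Remark~\ref{re-y4.10}, \eqref{eq-y5.1}), sum over $J_l\subset G_\beta^{l_2}$ so that $\sum_{J_l}\int_{J_l}$ collapses to $\int_{G_\beta^{l_2}}$, use $\int_{G_\beta^{l_2}}N(t)^3\,dt\lesssim 2^{l_2}\epsilon_3$ (Proposition~\ref{re-y5.1}) together with Cauchy--Schwarz in $t$, and then convert the $L^p_tL^q_xh^1$-norms of the frequency pieces $P_{\xi(t),l_1}\vec u$ and $P_{\xi(t),l_2-3\le\cdot\le l_2+3}\vec u$ into $\|\vec u\|_{\tilde{X}_i}$ via Lemma~\ref{le-y5.7}, with the weight $2^{(l_1-l_2)/2}$ keeping the $l_1$-sum under control; the prefactor $2^{-i}\cdot 2^{l_2}\epsilon_3$ then supplies the required smallness since $2^{l_2}\le 2^{i-10}$, and one arrives at $\lesssim\|\vec v_0\|^2_{L^2_xh^a}\|\vec u\|^4_{\tilde{X}_i}$. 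Combining the two parts and using $x^2\le 1+x^4$ yields the claim. I expect the only genuinely delicate bookkeeping to be this last summation --- balancing the weight $2^{(l_1-l_2)/2}$, the prefactor $2^{-i}$, and the conversion exponents of Lemma~\ref{le-y5.7} --- and, similarly, the orthogonal summation of the $m$-series in the main part; apart from this the argument is formally the same as its $l^2$-level counterpart in \cite{D,YZ}, because the loss of symmetry incurred by passing to the discrete $h^1$-level has already been paid for inside the proof of Theorem~\ref{th-y5.16}, through the cancellation identities of Section 4 and the nonlinear bound $\|\vec F(\vec u)\|_{h^1}\lesssim\|\vec u\|^3_{h^1}$.
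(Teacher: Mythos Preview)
Your plan departs from the paper's route. The paper does not give a standalone proof of Theorem~\ref{th-y5.17}; it derives all three bilinear Strichartz estimates together by ``repeating the arguments in Section~6 of \cite{YZ}'', i.e.\ Dodson's interaction--Morawetz argument run \emph{directly on $G_\beta^{l_2}$}, after having modified the nonlinear error terms $F_{2,j'}$ to accommodate the $h^1$-level summation. In that approach one sets up a Morawetz functional built from $e^{it\triangle}\vec v_0$ and $P_{\xi(t),\le l_2}\vec u$ on $G_\beta^{l_2}$, differentiates in $t$, and controls the resulting nonlinear errors (the $F_{1,j'},F_{2,j'},\ldots$ of \cite{YZ}) by the $\tilde X_i$-norm; the boundary terms give the constant $1$ and the nonlinear errors give the $\|\vec u\|^4_{\tilde X_i}$. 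The $h^1$-specific cancellations of Section~4 are used \emph{inside this step}: the distinctive $F_{2,j'}$ computations \eqref{eq-ykl5.91}--\eqref{eq-ykl5.92c} are Morawetz error terms on $G_\beta^{l_2}$, and their final bounds $2^{l_2-i}\|\vec u\|^4_{\tilde X_i(G_\alpha^i)}$ are exactly the shape needed for \eqref{eq-y5.67}. So the loss of symmetry is not paid for inside Theorem~\ref{th-y5.16} and then transported; it is paid for again here.

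Your alternative for the ``main part'' --- expanding $P_{\xi(t),\le l_2}\vec u=\sum_{m\le l_2}P_{\xi(t),m}\vec u$ and applying Proposition~\ref{pr-y5.5} on each block $G_\gamma^m$ --- has a gap. For each fixed $m$ the bilinear gain $2^{m-i}$ exactly cancels the $\tilde X$-weight $2^{l_2-m}$ built into Definition~\ref{de-y5.2}, so the contribution is $\sim 2^{l_2-i}\|\vec v_0\|^2_{L^2_xh^a}\|\vec u\|^2_{\tilde X_i}$, \emph{independent of $m$}; summing over $0\le m\le l_2$ costs a factor $\sim l_2$, which can be as large as $i-10$ and is not uniform in $i$ (hence not in $k_0$). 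The ``almost-orthogonality of the Littlewood--Paley pieces'' you invoke does not apply here: the quantity $\|e^{it\triangle}\vec v_0\|_{h^a}\cdot\|P_{\xi(t),m}\vec u\|_{h^1}$ is a pointwise product of two $h$-norms, not a Fourier projection of a single function, and multiplying by $\|e^{it\triangle}\vec v_0\|_{h^a}$ destroys any $L^2_x$-orthogonality across $m$. This is precisely why \cite{D,YZ} (and hence the paper) run the Morawetz identity on the full interval $G_\beta^{l_2}$ rather than attempt to sum a frequency decomposition.
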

\begin{theorem}[Third bilinear Strichartz estimate]\label{th-y5.19}For $a\in\{-1,1\}$, suppose that $\vec{v}_0=\{v_{0,j}\}_{j\in \mathbb{Z}^2}\in L_x^2h^{a} (\mathbb{R}^2\times \mathbb{Z}^2 )$, $\hat{v}_{0,j}$ supported on
$\{\xi:2^{i-5}\leq |\xi - \xi(G_{\alpha}^{i})|\leq 2^{i+5} \}$ for every $j\in \mathbb{Z}^2$. Then we have
\begin{equation}\label{eq-y5.97}
\sum_{0\leq l_2\leq i-10}\big\|\|P_{\xi(t), \leq l_2} \vec{u}\|_{h^{1}} \cdot \|e^{it\triangle}\vec{v}_0\|_{h^{a}}\big\|^2_{L^2_{t,x}(G^{i}_{\alpha}\times\mathbb{R}^2)}\lesssim \|\vec{v}_0\|^2_{L^2_xh^{a}}(1+\|\vec{u}\|^6_{\tilde{X}_i(G^{i}_{\alpha})}).
\end{equation}
\end{theorem}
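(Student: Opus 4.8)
The plan is to derive Theorem~\ref{th-y5.19} as the third member of the hierarchy of bilinear Strichartz estimates, by combining the first and second bilinear estimates (Theorems~\ref{th-y5.16} and~\ref{th-y5.17}) with a summation over the nested partition of $G^{i}_{\alpha}$, exactly in the spirit of Section~5 of~\cite{D} and Section~6 of~\cite{YZ}, but now carried out on the $h^{1}$ level in the discrete variable ``$j$'' rather than on $l^{2}$. The increasing powers $0\to 4\to 6$ of $\|\vec{u}\|_{\tilde{X}_{i}}$ across the three estimates reflect that each step feeds the previous bilinear bound into an error term that carries an extra factor of $\|\vec{u}\|$.

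Concretely, I would first fix $l_{2}\in\{0,\dots,i-10\}$ and partition $G^{i}_{\alpha}=\bigcup_{\beta}G^{l_{2}}_{\beta}$, so that by additivity of $L^{2}$ in time the contribution of this $l_{2}$ splits as $\sum_{\beta}\big\|\,\|P_{\xi(t),\le l_{2}}\vec{u}\|_{h^{1}}\,\|e^{it\triangle}\vec{v}_{0}\|_{h^{a}}\big\|^{2}_{L^{2}_{t,x}(G^{l_{2}}_{\beta}\times\mathbb{R}^{2})}$. On each $G^{l_{2}}_{\beta}$ I would decompose $P_{\xi(t),\le l_{2}}\vec{u}$ into the Littlewood--Paley pieces $P_{\xi(t),l_{1}}\vec{u}$, $l_{1}\le l_{2}$, which are almost orthogonal up to commutator errors produced by the motion of $\xi(t)$; these errors are controlled because $|\xi'(t)|\lesssim N(t)^{3}$ (Remark~\ref{re-y4.10}) and $\int_{G^{l_{2}}_{\beta}}N(t)^{3}\,dt\lesssim 2^{l_{2}}\epsilon_{3}$ (Proposition~\ref{re-y5.1}(1)--(2)). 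The ``diagonal'' pieces are estimated on the small subintervals $J_{l}\subset G^{l_{2}}_{\beta}$ by the first bilinear estimate, whose main term $2^{l_{1}-i}\|\vec{v}_{0}\|^{2}$ already carries the decisive gain from the frequency separation $\sim 2^{i}$ between the support of $\hat{v}_{0}$ and that of $P_{\xi(t),l_{1}}\vec{u}$; the residual off-diagonal and commutator pieces are absorbed by the second bilinear estimate on $G^{l_{2}}_{\beta}$, which is precisely where the power of $\|\vec{u}\|_{\tilde{X}_{i}}$ is pushed from $4$ to $6$. Summing in $\beta$, the frequency-localized $U^{2}_{\triangle}(h^{1};G^{l_{2}}_{\beta})$ quantities, weighted by the dyadic factors $2^{l_{1}-l_{2}}$ already built into Definition~\ref{de-y5.2}, sum up to $\lesssim\|\vec{u}\|^{2}_{\tilde{X}_{i}(G^{i}_{\alpha})}$; summing finally in $l_{2}$, the geometric factors $2^{l_{2}-i}$ and $2^{l_{1}-l_{2}}$ make the $l_{2}$-series converge and defeat the $2^{i-l_{2}}$ count of subintervals, leaving the desired bound $\lesssim\|\vec{v}_{0}\|^{2}_{L^{2}_{x}h^{a}}(1+\|\vec{u}\|^{6}_{\tilde{X}_{i}(G^{i}_{\alpha})})$.

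The step I expect to be the real obstacle is not the summation bookkeeping — which runs essentially as in~\cite{D,YZ} once the inputs are assembled — but carrying the $h^{1}$ weight through the multilinear algebra: when one commutes $\langle j'\rangle^{2}$ past the resonance sum, as in the treatment of the distinctive terms of $F_{2,j'}$ around~\eqref{eq-ykl5.91}, the $l^{2}$-symmetry that forces the relevant imaginary parts to vanish is lost, and these ``distinctive terms'' must be handled by the symmetrization $\langle j'\rangle^{2}\mapsto\frac12(\langle j'\rangle^{2}+\langle j_{2}'\rangle^{2})$ supplied by the second and third observations of Section~4, together with the nonlinear bound $\|\vec{F}(\vec{u})\|_{h^{1}}\lesssim\|\vec{u}\|^{3}_{h^{1}}$ of Lemma~\ref{es:resonant}. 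A secondary technical point is keeping the $\xi$-movement errors summable in $\beta$ and in $l_{2}$ simultaneously: this forces one to single out the subintervals $G^{l_{2}}_{\beta}$ with $N(G^{l_{2}}_{\beta})\lesssim 2^{l_{2}-5}\epsilon_{3}^{1/2}$ — exactly the ones entering the $\tilde{Y}$-norm — for which the bound on $\int N(t)^{3}\,dt$ genuinely beats the number of subintervals, the remaining few intervals being disposed of crudely.
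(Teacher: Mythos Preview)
Your proposal is correct and follows essentially the same route as the paper: the paper does not write out a self-contained proof of the third bilinear estimate but declares that, once the ``distinctive terms'' of $F_{2,j'}$ arising from the $\langle j'\rangle^{2}$ weight have been controlled (via the cancellations in Section~4 and the estimates \eqref{eq-ykl5.91}--\eqref{eq-ykl5.92c}), the remaining summation argument carries over verbatim from Section~6 of~\cite{YZ} (itself modeled on Section~5 of~\cite{D}). Your outline --- partition $G^{i}_{\alpha}$ into $G^{l_{2}}_{\beta}$'s, feed the first bilinear estimate on small intervals into the error term, close with the second bilinear estimate and the $\tilde{X}_{i}$-summation structure of Definition~\ref{de-y5.2}, and handle the $h^{1}$ symmetry loss by the observations of Section~4 --- is exactly this scheme, and your identification of the $0\to4\to6$ ladder and of the $\tilde{Y}$-type separation of intervals matches the mechanism in~\cite{D,YZ}.
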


Using the above three bilinear Strichartz estimates and repeating the argument of Section 5 in \cite{YZ}, we have
\begin{theorem}[Long time Strichartz estimate]\label{th-y6.2}
Suppose $\vec{u}(t)$ is an almost periodic solution to
\begin{equation}\label{eq-y1}
\begin{cases}
i\partial_t u_j + \Delta_{\mathbb{R}^2} u_j = \sum\limits_{\mathcal{R}(j)} u_{j_1} \bar{u}_{j_2} u_{j_3},\\
u_j(0) = u_{0,j},
\end{cases}
\end{equation}for $\vec{u}_{0}=\{u_{0,j}\}_{j\in\mathbb{Z}^2}\in L^2_xh^1(\mathbb{R}^2\times\mathbb{Z}^2).$
Then there exists a constant $C>0$ (only depending on $\vec{u}$), such that for any $M=2^{k_0}$, $\epsilon_1,\epsilon_2,\epsilon_3$ satisfying above conditions, $\|\vec{u}\|^4_{L^4_{t,x}h^{1}([0,T])}=M$ and $\int_0^T N(t)^3 dt=\epsilon_3M$,
$$\|\vec{u}\|_{\tilde{X}_{k_0}([0,T])}\leq C.$$
\end{theorem}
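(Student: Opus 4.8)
The plan is to run a bootstrap on the quantity $\|\vec u\|_{\tilde X_{k_*}([0,T])}$ over the index $k_*$, exactly along the lines of Section 5 of \cite{D} and Section 6 of \cite{YZ}, now carried out at the discrete $h^1$-level rather than the $l^2$-level. First I would fix $k_0$ and the small parameters $\epsilon_1,\epsilon_2,\epsilon_3$ as arranged in \eqref{eq-y5.1}--\eqref{eq-y5.33}, partition $[0,T]$ into the small intervals $J_l$ and the blocks $G_k^j$, and record the base case: on a single small interval $J_l$ one has $\|\vec u\|_{U^2_\triangle(h^1;J_l)}\lesssim 1$ and the high-frequency smallness $\|P_{\xi(t_0),>N(J_l)/(2^4\epsilon_3^{1/4})}\vec u\|_{U^2_\triangle(h^1;J_l)}\lesssim\epsilon_2$ from Proposition \ref{re-y5.1}(3). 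This gives $\|\vec u\|_{\tilde X_0([0,T])}\lesssim 1$. The inductive step is to show that if $\|\vec u\|_{\tilde X_{k_*}([0,T])}\le C$ for some constant independent of $k_*$, then the same bound (with the same $C$) holds for $\|\vec u\|_{\tilde X_{k_*+1}([0,T])}$; iterating up to $k_*=k_0$ yields the theorem.

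The heart of the induction is the Duhamel/atomic-space estimate: for a block $G_k^{j}$ one writes $\vec u$ on $G_k^j$ via Duhamel from an endpoint, applies Lemma \ref{le-y4.5} to pass to the $DU^2_\triangle(h^1)$ norm of the nonlinearity $\mathbf F(\vec u)$ localized to dyadic shells $\{|\xi-\xi(G_\alpha^i)|\sim 2^i\}$, and then decomposes $\mathbf F(\vec u)$ into the pieces $F_{1,j'}$, $F_{2,j'}$, $F_{3,j'}$ according to how many high-frequency factors $u^h$ appear (as in (6.27)--(6.33) of \cite{YZ}). Each piece is estimated by pairing against a $V^2_\triangle(h^{-1})$ test function and invoking the bilinear Strichartz estimates: Theorem \ref{th-y5.16} (which supplies the gain $2^{l_2-i}$ together with the $\xi'(t)$-error term controlled via Remark \ref{re-y4.10} and Proposition \ref{re-y5.1}(1)), Theorem \ref{th-y5.17}, and Theorem \ref{th-y5.19}. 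The $h^1$ weights are handled exactly as in Section 4: the problematic terms are the ones in $F_{2,j'}$ where the weight $\langle j'\rangle^2$ destroys the $l^2$-symmetry, and these are precisely the terms \eqref{eq-ykl5.91}(a), \eqref{eq-ykl5.91'}(c), $\uppercase\expandafter{\romannumeral1}$--$\uppercase\expandafter{\romannumeral3}$ reorganized above; the Third observation of Section 4.1 supplies the cancellation that converts $\langle j'\rangle^2$ into $\langle j'\rangle^2+\langle j_2'\rangle^2$ so that the weight can be absorbed into the high-frequency factor, and the resulting bilinear pieces obey \eqref{eq-ykl5.92bc}--\eqref{eq-ykl5.92c}, i.e. are bounded by $2^{l_2-i}\|\vec u\|^4_{\tilde X_i(G_\alpha^i)}$. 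One also uses the nonlinear estimates of Lemma \ref{es:resonant}, in particular $\|\vec F(\vec u)\|_{h^1}\lesssim\|\vec u\|^3_{h^1}$ at the discrete direction, for the non-distinctive terms (e.g. \eqref{eq-ykl5.91'}$(a)'$).

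Summing over dyadic shells $0\le i\le j$ with the weights $2^{i-j}$ in the definition of $\|\cdot\|_{X(G_k^j)}$, together with the summability $\sum_{J_l\subset G_k^j}N(J_l)\lesssim 2^j\epsilon_3$ from \eqref{eq-y5.34} and the choice $\epsilon_3<\epsilon_2^{10}$, produces a bound of the form $\|\vec u\|^2_{\tilde X_{k_*+1}}\lesssim 1+\epsilon_2^{\sigma}\|\vec u\|^2_{\tilde X_{k_*+1}}+(\text{lower order in }\|\vec u\|_{\tilde X_{k_*+1}})$ for some $\sigma>0$; choosing $\epsilon_2$ small lets one absorb the quadratic term and close the bootstrap with a constant $C$ depending only on $\vec u$ (through $m_0$ and $\delta(\vec u)$) but not on $k_0$. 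The main obstacle, and the only place where genuine new work beyond \cite{YZ} is needed, is controlling the distinctive $h^1$-weighted terms in $F_{2,j'}$: one must verify that after using the Section 4 cancellations the leftover commutator-type expressions $\uppercase\expandafter{\romannumeral1}$, $\uppercase\expandafter{\romannumeral2}$, $\uppercase\expandafter{\romannumeral3}$ (coming from $P_{\le l_2}(u^h_{j_1'}\bar u^h_{j_2'})-(P_{\xi(t),\le l_2}u^h_{j_1'})(P_{\xi(t),\le l_2}\bar u^h_{j_2'})$) each carry enough high-frequency localization on \emph{both} bilinear factors to be estimated by Proposition \ref{pr-y5.5} with the decisive $2^{l_2-i}$ gain; once this is in place the remainder of the argument is a line-by-line transcription of Section 5--6 of \cite{YZ}.
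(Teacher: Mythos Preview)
Your proposal is correct and takes the same approach as the paper. The one organizational difference worth noting: in the paper, the $F_{2,j'}$ cancellation analysis you describe (the terms \eqref{eq-ykl5.91}--\eqref{eq-ykl5.92c}, handled via the Third observation of Section~4.1) is placed \emph{inside the proofs of the three bilinear Strichartz estimates} Theorems~\ref{th-y5.16}--\ref{th-y5.19}, because those estimates are established through an interaction-Morawetz argument whose time derivative produces the $h^{1}$-weighted nonlinear sums $\sum_{j'}\langle j'\rangle^{2}\,\bar u^{l}_{j'}F_{2,j'}$; once those bilinear estimates are in hand, the paper's proof of Theorem~\ref{th-y6.2} is literally ``repeating the argument of Section~5 in \cite{YZ}'' with no further cancellation work. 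You have instead folded that same work into the body of the long time Strichartz bootstrap, but the ingredients, the induction on $k_{*}$, and the closing via $\epsilon_{2}$-smallness are identical.
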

\begin{remark}
Throughout this section the implicit constant depends only on $\vec{u}$, and not on $M$, or $\epsilon_1,\epsilon_2,\epsilon_3$.
\end{remark}

\subsection{Frequency localized interaction Morawetz estimate}

In this section we will prove frequency localized interaction Morawetz estimate, which are used to complete the proof of Theorem \ref{mainthm}.

Suppose $[0,T]$ is an interval such that, for some integer $k_0$,
$\|\vec{u}\|^4_{L^4_{t,x}h^1([0,T])}=2^{k_0}$. Rescale with $\lambda=\frac{\epsilon_3 2^{k_0}}{K}$, then by Theorem \ref{th-y6.2}, we have
\begin{equation}\label{eq-y6.1}
\|\vec{u}_{\lambda}\|_{\tilde{X}_{k_0}([0,\frac{T}{\lambda^2}]\times \mathbb{R}^2)}\lesssim 1.
\end{equation}
Let $\vec{w}=P_{\leq k_0}\vec{u}$, then $\vec{w}=\{w_j\}_{j\in\mathbb{Z}^2}$ satisfies the following infinite dimensional vector-valued equation
\begin{equation*}
  i\partial_t w_{j}+\triangle w_{j}=\sum_{\mathcal{R}(j)}w_{j_1}\bar{w}_{j_2}w_{j_3}+N_{j},
\end{equation*}
where $N_{j}=P_{\leq k_0}\big(\sum_{\mathcal{R}(j)}u_{j_1}\bar{u}_{j_2}u_{j_3}\big)- \sum_{\mathcal{R}(j)}w_{j_1}\bar{w}_{j_2}w_{j_3}$ and we denote $\vec{N}=\{N_{j}\}_{j\in \mathbb{Z}^2}$.

For $a\in\{0,2\}$, let
\begin{equation}\label{eq-y5.52'''}
\begin{split}
M(t)=\sum_{j,j'\in \mathbb{Z}^2} \langle j \rangle^a \langle j' \rangle^a \big(\int_{\mathbb{R}^2} \int_{\mathbb{R}^2} |w_{j'}(t,y)|^2 \frac{(x-y)}{|x-y|}\cdot Im[\bar{w}_j\nabla w_j](t,x)dxdy \big).
\end{split}
\end{equation}
Following the calculation of \cite{FL} and \cite{D}, we can show
\begin{equation}\label{eq-y9.2}
 \|\sum_{j\in\mathbb{Z}^2}\langle j \rangle^a |\nabla|^{1/2}|w_j(t,x)|^2\|^2_{L^2_{t,x}([0,\frac{T}{\lambda^2}]\times \mathbb{R}^2)} \lesssim \sup_{[0,\frac{T}{\lambda^2}]} |M(t)| + E,
\end{equation}
where $E$ is a Galilean invariant quantity. After Galilean transformation,
\begin{align}\label{eq-y9.3}
  E= & 2\big| \sum_{j,j'\in \mathbb{Z}} \langle j \rangle^a \langle j' \rangle^a \int^{\frac{T}{\lambda^2}}_{0} \int_{\mathbb{R}^2} \int_{\mathbb{R}^2} Im[\bar{w}_{j} (\nabla- i\xi(t))w_{j}](t,x) \frac{(x-y)}{|x-y|}\cdot Im[\bar{w}_{j'}N_{j'}](t,y)dxdydt \big| \\ \label{eq-y5.54''}
  + & \big| \sum_{j,j'\in \mathbb{Z}} \langle j \rangle^a \langle j' \rangle^a  \int^{\frac{T}{\lambda^2}}_{0} \int_{\mathbb{R}^2} \int_{\mathbb{R}^2} |w_j(t,y)|^2 \frac{(x-y)}{|x-y|}\cdot Im[\bar{N}_{j'} (\nabla- i\xi(t))w_{j'}](t,x)dxdydt \big|\\ \label{eq-y5.55''}
  + & \big| \sum_{j,j'\in \mathbb{Z}} \langle j \rangle^a \langle j' \rangle^a \int^{\frac{T}{\lambda^2}}_{0} \int_{\mathbb{R}^2} \int_{\mathbb{R}^2} |w_j(t,y)|^2 \frac{(x-y)}{|x-y|}\cdot Im[\bar{w}_{j'} (\nabla- i\xi(t))N_{j'}](t,x)dxdydt \big| .
\end{align}
Repeating the argument in Section 7 of \cite{YZ} and using the three cancellations we observed  in last section, we can estimate
$$\sup_{[0,\frac{T}{\lambda^2}]} |M(t)| + E\lesssim o(K),$$
where $\int_0^T N(t)^3 dt=K$ and $o(K)$ is a quantity such that $\frac{o(K)}{K}\rightarrow0 $ as $K\rightarrow \infty$.
Thus as in \cite{YZ}, we have

\begin{theorem}[Frequency localized interaction Morawetz estimate]\label{th-y9.1}
   Suppose $\vec{u}(t,x)$ is a minimal mass blowup solution to \eqref{eq-y1} on $[0,T]$ with
$\int_0^T N(t)^3 dt=K$.  Then for $a\in\{0,2\}$, we have
\begin{equation}\label{eq-y9.1}
 \|\sum_{j\in\mathbb{Z}^2}\langle j \rangle^a |\nabla|^{1/2}|P_{\leq \frac{10K}{\epsilon_1}}u_j(t,x)|^2\|^2_{L^2_{t,x}([0,T]\times \mathbb{R}^2)} \lesssim o(K),
\end{equation}
where $o(K)$ is a quantity such that $\frac{o(K)}{K}\rightarrow0 $ as $K\rightarrow \infty$.
\end{theorem}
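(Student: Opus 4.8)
The plan is to adapt the frequency-localized interaction Morawetz estimate of Dodson \cite{D} (following the calculation of Fan--Lin \cite{FL}) to the resonant system, carrying every discrete summation in $j$ at the $h^1$ level rather than the $l^2$ level, exactly as was done for the three bilinear estimates of the previous subsection. First I would rescale: writing $2^{k_0}=\|\vec{u}\|^4_{L^4_{t,x}h^1([0,T])}$ and setting $\lambda=\epsilon_3 2^{k_0}/K$, the rescaled solution $\vec{u}_\lambda(t,x)=\lambda\vec{u}(\lambda^2 t,\lambda x)$, which lives on $[0,T/\lambda^2]$, still satisfies $\|\vec{u}_\lambda\|^4_{L^4_{t,x}h^1}=2^{k_0}$ (mass-criticality of this norm) and $\int_0^{T/\lambda^2}N_\lambda(t)^3\,dt=\epsilon_3 2^{k_0}$, so Theorem \ref{th-y6.2} applies and gives $\|\vec{u}_\lambda\|_{\tilde{X}_{k_0}([0,T/\lambda^2])}\lesssim 1$. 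Since $\epsilon_3\ll\epsilon_1$, after rescaling the cutoff $P_{\leq 10K/\epsilon_1}$ becomes a projection to frequencies well below $2^{k_0}$, so it suffices to prove the analogue of \eqref{eq-y9.1} for $\vec{w}:=P_{\leq k_0}\vec{u}_\lambda$ on $[0,T/\lambda^2]$ with right-hand side $o(K)$, namely \eqref{eq-y9.2}; undoing the scaling at the end (plus a routine dyadic frequency-localization step to reinstate $P_{\leq 10K/\epsilon_1}$) then recovers the theorem.

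Next, $\vec{w}$ solves the resonant system with a forcing term $\vec{N}=\{N_j\}$, $N_j=P_{\leq k_0}F_j(\vec{u}_\lambda)-F_j(\vec{w})$, which by Lemma \ref{es:resonant} and the $\tilde{X}_{k_0}$-bound is under control and is, morally, supported at frequencies $\gtrsim 2^{k_0}$ relative to $\xi(t)$. I would then introduce the interaction Morawetz action $M(t)$ of \eqref{eq-y5.52'''}, weighted by $\langle j\rangle^a\langle j'\rangle^a$, and differentiate it in time. Following \cite{FL,D}: the interaction-potential term produces, up to a positive constant, the nonnegative quantity appearing on the left of \eqref{eq-y9.1} (with $\vec{w}$ replacing $P_{\leq 10K/\epsilon_1}\vec{u}$); here one uses that, after summing in $j$ against $\langle j\rangle^a$, the contribution of the resonant nonlinearity $\sum_{\mathcal{R}(j)}w_{j_1}\bar w_{j_2}w_{j_3}$ is the sign-definite ``defocusing'' term, while the additional ``cross'' terms carrying the discrete weight are precisely those annihilated by the First/Second/Third cancellation identities of Section 4 (including their versions with $P_{\xi(t),\leq l_2}$ inserted). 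What is left is the boundary contribution $\lesssim\sup_t|M(t)|$ together with the contributions of $\vec{N}$, which, after a Galilean transformation removing the drift $\xi(t)$, become the Galilean-invariant quantity $E$ of \eqref{eq-y9.3}--\eqref{eq-y5.55''}. This yields \eqref{eq-y9.2}.

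It then remains to prove $\sup_t|M(t)|+E\lesssim o(K)$. For the boundary term one combines Cauchy--Schwarz in the pair $(x,y)$, the $h^1$-level Strichartz control encoded in $\|\vec{u}_\lambda\|_{\tilde{X}_{k_0}}\lesssim 1$, and the concentration of the almost periodic solution (Corollary \ref{co-y4.9} and Theorem \ref{th-y4.5}: the mass of $\vec{u}_\lambda$ sits at spatial scale $N_\lambda(t)^{-1}$ and at frequencies of size $N_\lambda(t)$, far below $2^{k_0}$), which together bound $\sup_t|M(t)|$ by a quantity that is $o(K)$. For $E$, I would insert the explicit form of $\vec{N}$ and split each of \eqref{eq-y9.3}--\eqref{eq-y5.55''} into high/low-frequency pieces $u^h_\sharp,u^l_\sharp$ in precisely the way $F_{2,j'}$ was split in \eqref{eq-y5.90}--\eqref{eq-ykl5.95'}: the $\Im$-cancellations discard the dangerous terms, and the surviving ones are estimated by the bilinear Strichartz estimates (Propositions \ref{pr-y5.4} and \ref{pr-y5.5}, together with Remark \ref{convolution}) tested against the $\tilde{X}_{k_0}$- and $\tilde{Y}_{k_0}$-bounds, each application gaining a power of the frequency ratio and hence contributing only $o(K)$. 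This step is the direct analogue of Section 7 of \cite{YZ}, the sole modification being the $l^2\to h^1$ upgrade, which is absorbed exactly as the distinctive terms were in Section 5.1.

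The main obstacle is this last step, and within it the control of $E$ (and of $\sup_t|M(t)|$) at the $h^1$ level: propagating the discrete weight $\langle j\rangle^a$ through the Morawetz identity destroys the algebraic symmetry that makes the $l^2$-level computation of \cite{YZ} clean, so the crux is to check that \emph{every} potentially large term produced by $\partial_t M(t)$ and by the expansion of $E$ matches one of the three $\Im$-cancellation identities of Section 4, and that the terms which survive genuinely carry a decaying factor, so that the final bound is $o(K)$ rather than merely $O(K)$. A secondary and routine point is the frequency-localization bookkeeping that converts the estimate for $\vec{w}=P_{\leq k_0}\vec{u}_\lambda$ into the stated estimate for $P_{\leq 10K/\epsilon_1}\vec{u}$ once the scaling is undone; this causes no trouble precisely because $\epsilon_3\ll\epsilon_1$.
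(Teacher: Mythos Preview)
Your proposal is correct and follows essentially the same route as the paper: rescale so that the long-time Strichartz bound $\|\vec{u}_\lambda\|_{\tilde{X}_{k_0}}\lesssim 1$ applies, set $\vec{w}=P_{\leq k_0}\vec{u}_\lambda$, run the weighted interaction Morawetz action \eqref{eq-y5.52'''} to obtain \eqref{eq-y9.2}, and then control $\sup_t|M(t)|+E$ by repeating Section~7 of \cite{YZ} at the $h^1$ level, invoking the three cancellation identities of Section~4 to kill the terms that the discrete weight $\langle j\rangle^a$ would otherwise spoil. One minor slip: the reference \cite{FL} in this paper is Planchon--Vega (bilinear virial identities), not ``Fan--Lin''.
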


\section{The Rigidity Theorem}
In this section, we prove the rigidity theorem, which is the last step to prove the scattering for the initial value problem \eqref{maineq}. The rigidity theorem and its proof are as below. We will discuss two scenarios respectively.
\begin{theorem}\label{th-y9.2}
   There does not exist a minimal mass blowup solution to \eqref{eq-y1}.
\end{theorem}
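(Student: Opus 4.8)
The plan is to argue by contradiction, following the dichotomy of Dodson \cite{D} and its adaptation in \cite{YZ}. Suppose $\vec u$ is a minimal mass blowup solution to \eqref{eq-y1} as produced by Theorem \ref{th-y4.3}: it is almost periodic modulo $G$ with parameters $x(t),\xi(t),N(t)$ furnished by Proposition \ref{pr-y4.8}, it has mass $M(\vec u)=m_0\in(0,\infty)$, and $\|\vec u\|_{L^4_{t,x}l^2}$ is infinite towards both ends of the maximal lifespan $I$. Restricting attention to a half-line and rescaling so that $[0,\infty)\subset I$ and $N(t)\le 1$ on $[0,\infty)$, as in \cite{D,YZ} (this is the setting already used in Theorem \ref{th-y4.5}(3)--(4)), I would split into two cases according to whether $\int_0^\infty N(t)^3\,dt$ is finite or infinite.

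\textbf{Case 1: $\int_0^\infty N(t)^3\,dt=\infty$ (the quasi-soliton regime).} Fix a large parameter $K$ and choose $T$ with $\int_0^T N(t)^3\,dt=K$. On one hand, Theorem \ref{th-y9.1} gives, for $a\in\{0,2\}$,
\[
\Big\|\sum_{j\in\mathbb Z^2}\langle j\rangle^a|\nabla|^{1/2}\big|P_{\le \frac{10K}{\epsilon_1}}u_j\big|^2\Big\|^2_{L^2_{t,x}([0,T]\times\mathbb R^2)}\lesssim o(K).
\]
On the other hand, since $N(t)\le 1\ll K/\epsilon_1$, Corollary \ref{co-y4.9} shows that $P_{\le \frac{10K}{\epsilon_1}}\vec u(t)$ retains all but an $O(\epsilon_2^2)$ fraction of the mass for each $t\in[0,T]$ (after undoing the modulation $e^{ix\cdot\xi(t)}$ and the translation by $x(t)$), so a Sobolev/H\"older inequality at the spatial scale $N(t)^{-1}$, together with the compactness of $\{G\vec u(t)\}$, yields the matching lower bound
\[
\Big\|\sum_{j\in\mathbb Z^2}\langle j\rangle^a|\nabla|^{1/2}\big|P_{\le \frac{10K}{\epsilon_1}}u_j\big|^2\Big\|^2_{L^2_{t,x}([0,T]\times\mathbb R^2)}\gtrsim \int_0^T N(t)^3\,dt=K.
\]
Comparing the two forces $K\lesssim o(K)$, a contradiction once $K$ is large; here the only new point is that the $\langle j\rangle^a$-weighted lower bound uses exactly the $h^1$-compactness packaged in Corollary \ref{co-y4.9}.

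\textbf{Case 2: $\int_0^\infty N(t)^3\,dt<\infty$ (the frequency cascade regime).} Then $N(t)\to 0$ along a sequence $t_n\to\sup I$. The key step is to upgrade regularity: iterating the long time Strichartz estimate of Theorem \ref{th-y6.2} together with the weak nonlinear bounds \eqref{eq: l2es}--\eqref{eq: h1es} of Lemma \ref{es:resonant} and the frequency localization of $\vec u(t)$ near $\xi(t)$ at scale $\lesssim N(t)$, one shows $\vec u\in L^\infty_t(\dot H^1_xh^1\cap L^2_xh^1)$ on $I$, and moreover $\|\vec u(t)\|_{\dot H^1_xh^1}\to 0$ as $t\to\sup I$ since $\widehat{\vec u}(t)$ concentrates in $\{|\xi-\xi(t)|\lesssim N(t)\}$. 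Because the $h^1$-weighted mass and energy are conserved, real, and coercive — this relies on the symmetrization identities of Section 4, in particular the first observation $\Im\big(\sum_j\langle j\rangle^2 u_j\sum_{\mathcal R_j}\bar u_{j_1}u_{j_2}\bar u_{j_3}\big)=0$, and on non-negativity of the quartic term (compare the $L^4(\mathbb T^2\times[0,2\pi])$ representation in Section 3) — passing to the limit along $t_n$ forces the energy to vanish, hence $\nabla\vec u\equiv 0$ and $\vec u\equiv 0$, contradicting $M(\vec u)=m_0>0$.

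I expect the main obstacle to be the additional-regularity step in Case 2. In \cite{YZ} this is done at the $l^2$ level, but the failure of the $l^2$-estimate forces one to run the bootstrap at the $h^1$ level using only the weaker bounds of Lemma \ref{es:resonant}; one must track how the discrete weights $\langle j\rangle$ interact with the long time Strichartz space $\tilde X_{k_0}$ and re-use the cancellation identities of Section 4 to control the $h^1$-energy increment, exactly as in the proof of Theorem \ref{th-y6.2}. The lower bound in Case 1 also needs attention, but there the discrete weight is benign because Corollary \ref{co-y4.9} already supplies the $h^1$-compactness, and the remainder is the standard interaction Morawetz lower bound of \cite{D,YZ}.
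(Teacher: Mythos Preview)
Your overall strategy matches the paper's: split into quasi-soliton versus rapid cascade, use Theorem~\ref{th-y9.1} plus compactness for the former, and additional regularity plus conserved energy for the latter. The quasi-soliton half is essentially the paper's argument (the paper runs the lower bound with $a=0$, but your observation that the discrete weight is benign there is correct).

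In the cascade case your endgame has two imprecisions worth flagging. First, $\|\vec u(t)\|_{\dot H^1_xh^1}$ does \emph{not} tend to zero; only $\|e^{-ix\cdot\xi(t)}\vec u(t)\|_{\dot H^1_xh^1}$ does, and since the energy is not Galilean invariant you cannot ``pass to the limit along $t_n$'' without fixing a frame. Second, the paper does not invoke an ``$h^1$-weighted energy'' (only the $h^1$-mass is known to be conserved, via the first observation of Section~4); it uses the \emph{standard} energy with unweighted kinetic part. The paper's endgame is: pick one $t_0$ with $\|e^{-ix\cdot\xi(t_0)}\vec u(t_0)\|_{\dot H^1_xh^1}<\epsilon$, apply a single Galilean transform, bound the standard energy by $\lesssim\epsilon^2$ via Gagliardo--Nirenberg, and then use spatial compactness plus H\"older to get
\[
m_0\le C\,E(\vec u(0))^{1/2}\cdot\frac{C(m_0/1000)}{N(0)}+\frac{m_0}{100},
\]
a contradiction for $\epsilon$ small. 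Your intended conclusion ``energy $=0$, hence $\vec u\equiv 0$'' can be salvaged by a slightly different route: since $|\xi'(t)|\lesssim N(t)^3$ is integrable, $\xi(t)\to\xi_\infty$; one Galilean shift sets $\xi_\infty=0$, and then $\|\vec u(t)\|_{\dot H^1_xh^1}\to 0$ genuinely, Gagliardo--Nirenberg forces the potential to vanish in the limit, and conservation gives $E=0$.
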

\begin{proof}It  suffices to exclude two scenarios separately.\\
\textbf{Case 1.: Rapid frequency cascade: $\int_0^{\infty} N(t)^3 dt <\infty$.}

In this case, we can repeat the process as section 5 of \cite{D} and follow the arguments in Section 6 in that paper to obtain an additional regularity of a minimal mass blowup solution to \eqref{eq-y1}, that is, $\|\vec{u}(t,x)\|_{L^{\infty}_t \dot{H}^{3}_xh^1([0,\infty)\times \mathbb{R}^2)} \lesssim_{m_0} (\int^{\infty}_0 N(t)^3 dt)^3$, which together with the definition of almost periodic solution yields
$$\|e^{-ix\cdot\xi(t)}\vec{u}\|_{\dot{H}^{1}_xh^1}\lesssim N(t)C(\eta(t))+\eta(t)^{1/2},\quad \eta(t)\rightarrow 0.$$
Since $\lim_{t\rightarrow\infty}N(t)=0$, this implies $\lim_{t\rightarrow\infty}\|e^{-ix\cdot\xi(t)}\vec{u}\|_{\dot{H}^{1}_xh^1}=0$. So for any $\epsilon>0$, there exists $t_0>0$ (by Galilean transformation we may take $t_0=0$), such that $\|e^{-ix\cdot\xi(t_0)}\vec{u}(t_0)\|_{\dot{H}^{1}_xh^1}<\epsilon$. \\
Notice that
$$E(\vec{u}(t))=\frac{1}{2}\int_{\mathbb{R}^{2}}\sum\limits_{j\in\mathbb{Z}^2}|\nabla u_{j}(t,x)|^{2}\mathrm{d}x+\frac{1}{4}\int_{\mathbb{R}^{2}}\sum\limits_{\substack{j_0,j_1,j_2,j_3\in \mathbb{Z}^2,\\ j_1-j_2+j_3 = j_0,\\ |j_1|^2-|j_2|^2 +|j_3|^2 = |j_0|^2.}} \bar{u}_{j_0}u_{j_1} \bar{u}_{j_2} u_{j_3}\mathrm{d}x=E(\vec{u}(0)),$$
by Minkowski inequality and sharp Gagliardo-Nirenberg inequality, we can calculate
\begin{equation*}
\begin{split}
\int_{\mathbb{R}^{2}}\sum\limits_{\substack{j_0,j_1,j_2,j_3\in \mathbb{Z}^2,\\ j_1-j_2+j_3 = j_0,\\ |j_1|^2-|j_2|^2 +|j_3|^2 = |j_0|^2.}} \bar{u}_{j_0}u_{j_1} \bar{u}_{j_2} u_{j_3}\mathrm{d}x \lesssim& \int_{\mathbb{R}^{2}}\big( \sum_{j}\langle j\rangle^2|u_j|^2\big)^2\mathrm{d}x\lesssim  \big(\sum_{j} \langle j\rangle^2 \|u_j\|^2_{L^4_{x}(\mathbb{R}^{2})}\big)^2\\
\lesssim & \big(\sum_{j} \langle j\rangle^2 \|u_j\|_{L^2_{x}(\mathbb{R}^{2})} \|\nabla u_j\|_{L^2_{x}(\mathbb{R}^{2})}\big)^2\\
\lesssim & (\sum_{j} \langle j\rangle^2 \|u_j\|^2_{L^2_{x}(\mathbb{R}^{2})}) (\sum_{j}\langle j\rangle^2\|\nabla u_j\|^2_{L^2_{x}(\mathbb{R}^{2})}).
\end{split}
\end{equation*}Therefore, $E(\vec{u}(t))=E(\vec{u}(0))\lesssim \|\vec{u}(0)\|^2_{\dot{H}^{1}_xh^1
}< \ep^2$.\\
However, by H\"{o}lder inequality,
\begin{align*}
  \sum_{j\in\mathbb{Z}^2}\langle j\rangle^2\int|u_j(0,x)|^2dx \leq& \sum_{j\in\mathbb{Z}^2}\langle j\rangle^2 \int_{|x-x(0)|\leq \frac{C\big(\frac{\sum_j\langle j\rangle^2\|u_j(0)\|^2_{L^2}}{1000}\big)}{N(0)}}|u_j(0,x)|^2dx +\frac{\sum_j\langle j\rangle^2\|u_j(0)\|^2_{L^2}}{1000}\\
   \leq& C[\int_{\mathbb{R}^{2}}\big( \sum_{j}\langle j\rangle^2|u_j(0,x)|^2\big)^2dx]^{1/2} \frac{C\big(\frac{\sum_j\langle j\rangle^2\|u_j(0)\|^2_{L^2}}{1000}\big)}{N(0)} +\frac{\sum_j\langle j\rangle^2\|u_j(0)\|^2_{L^2}}{1000} \\
   \leq& CE(\vec{u}(0))^{1/2}\frac{C\big(\frac{\sum_j\langle j\rangle^2\|u_j(0)\|^2_{L^2}}{1000}\big)}{N(0)} +\frac{\sum_j\langle j\rangle^2\|u_j(0)\|^2_{L^2}}{1000}.
\end{align*}
Choose $\ep$ sufficiently small such that
$$C \ep \frac{C\big(\frac{\sum_j\langle j\rangle^2\|u_j(0)\|^2_{L^2}}{1000}\big)}{N(0)}<\frac{\sum_j\langle j\rangle^2\|u_j(0)\|^2_{L^2}}{100}.$$
This implies $\sum_{j\in\mathbb{Z}^2}\langle j\rangle^2\int|u_j(0,x)|^2dx<\frac{\sum_{j\in\mathbb{Z}^2}\langle j\rangle^2\int|u_j(0,x)|^2dx}{100}$, which can't happen unless $\sum_{j\in\mathbb{Z}^2}\langle j\rangle^2\int|u_j(0,x)|^2dx=0$, so $\int|u_j(0,x)|^2dx=0$ for every $j\in\mathbb{Z}^2$, this infers $\|\vec{u}(0)\|_{L^2_xh^1}=0$. This excludes rapid frequency cascade scenario.\vspace{3mm}

\noindent\textbf{Case 2. Quasi-soliton: $\int_0^{\infty} N(t)^3 dt =\infty$.}

In this case, we denote $Iu_j=P_{\leq10\epsilon_1^{-1}K}u_j$. By frequency localized interaction Morawetz estimate,
$$\|\sum_{j\in\mathbb{Z}}|\nabla|^{1/2}|Iu_j(t,x)|^2\|^2_{L^2_{t,x}([0,T]\times \mathbb{R}^2)} \lesssim o(K),$$
where recalling $\int^T_0 N(t)^3 dt=K$.
By H\"{o}lder inequality and the Sobolev embedding, we have
\begin{align*}
&\sum_{j\in\mathbb{Z}}\int_{|x-x(t)|\leq \frac{C\big(\frac{\sum_j\|u_j\|^2_{L^2}}{1000}\big)}{N(t)}}|Iu_j(t,x)|^2dx \\
\lesssim& \big(\frac{C\big(\frac{\sum_j\|u_j\|^2_{L^2}}{1000}\big)}{N(t)}\big)^{3/2} \|\sum_j|Iu_j(t)|^2\|_{L^4_x}\\
\lesssim&\big(\frac{C\big(\frac{\sum_j\|u_j\|^2_{L^2}}{1000}\big)}{N(t)}\big)^{3/2} \|\sum_{j\in\mathbb{Z}}|\nabla|^{1/2}|Iu_j(t,x)|^2\|_{L^2_{x}}.
\end{align*}
Now for $K>C\big(\frac{\sum_j\|u_j\|^2_{L^2}}{1000}\big)$, by Proposition \ref{pr-y4.8}, we have
$$\frac{\sum_j\|u_j\|^2_{L^2}}{2}<\sum_{j\in\mathbb{Z}^2}\int_{|x-x(t)|\leq \frac{C\big(\frac{\sum_j\|u_j\|^2_{L^2}}{1000}\big)}{N(t)}}|Iu_j(t,x)|^2dx.$$
Therefore,
\begin{align*}
&\big(\sum_j\|u_j\|^2_{L^2}\big)^2 K\sim \big(\sum_j\|u_j\|^2_{L^2}\big)^2\int_0^T N(t)^3dt\\
\lesssim &\int_0^T N(t)^3 \left(\sum_{j\in\mathbb{Z}^2}\int_{|x-x(t)|\leq \frac{C\big(\frac{\sum_j\|u_j\|^2_{L^2}}{1000}\big)}{N(t)}}|Iu_j(t,x)|^2dx\right)^2 dt\\ \lesssim&\|\sum_{j\in\mathbb{Z}}|\nabla|^{1/2}|Iu_j(t,x)|^2\|^2_{L^2_{t,x}([0,T]\times \mathbb{R}^2)}\lesssim o(K).
\end{align*}
Combined with the mass conservation, this gives a contradiction for $K$ sufficiently large. Therefore, the proof of Theorem
\ref{th-y9.2} is complete.
\end{proof}
The proof of Theorem \ref{mainthm} is also complete in view of the standard concentration compactness method.
\section{Appendix}
\subsection{Further remarks}
In this subsection, we make a few remarks on the large data scattering for the defocusing critical NLS. We also include some related problems for interested readers.

We discuss and summarize a specific type of problems by making some restrictions. We consider a series of problems, i.e. large data scattering for the defocusing critical NLS with integer index nonlinearity on low dimensional (when $m+n \leq 4$) waveguides as follows
\begin{align}
(i\partial_t+ \Delta_{\mathbb{R}^{m} \times \mathbb{T}^{n}}) u &= F(u) = |u|^{p-1} u, \\
u(0,x) &= u_{0} \in H^{1}(\mathbb{R}^{m} \times \mathbb{T}^{n}),
\end{align}
where $\Delta_{\mathbb{R}^m\times \mathbb{T}^n}$ is the Laplace-Beltrami operator on $\mathbb{R}^m\times \mathbb{T}^n$ and  $u:\mathbb{R}\times \mathbb{R}^m\times \mathbb{T}^n \rightarrow \mathbb{C}$ is a complex-valued function.

Based on existing results and theories, it is expected that only when $\frac{4}{m}\leq p \leq \frac{4}{m+n-2}$, scattering behavior is expected to hold. (see \cite{IPRT3,HP,Z1} for explanations).

Noticing the range $\frac{4}{m}\leq p \leq \frac{4}{m+n-2}$, we have $n=0,1,2$. In fact, there are totally $11$ models. When $n=0$ (pure Euclidean case),
\begin{enumerate}
\item $(i\partial_t+ \Delta_{\mathbb{R} }) u = F(u) = |u|^{4} u$ \quad
\item $(i\partial_t+ \Delta_{\mathbb{R}^{2} }) u = F(u) = |u|^{2} u$ \quad
\item $(i\partial_t+ \Delta_{\mathbb{R}^{3} }) u = F(u) = |u|^{4} u$ \quad
\item $(i\partial_t+ \Delta_{\mathbb{R}^{4} }) u = F(u) = |u|^{2} u$ \quad
\item $(i\partial_t+ \Delta_{\mathbb{R}^{4} }) u = F(u) = |u|u$ \quad

\end{enumerate}
The results for the Euclidean case are well-known.

When $n=1$, there are 4 cases
\begin{enumerate}
\item $(i\partial_t+ \Delta_{\mathbb{R} \times \mathbb{T}}) u = F(u) = |u|^{4} u$ \quad solved by  Cheng-Guo-Zhao \cite{CGZ} (mass critical and energy subcritical)
\item $(i\partial_t+ \Delta_{\mathbb{R}^{2} \times \mathbb{T}}) u = F(u) = |u|^{4} u$ \quad solved by Zhao \cite{Z2} (mass supercritical and energy critical)
\item $(i\partial_t+ \Delta_{\mathbb{R}^{3} \times \mathbb{T}}) u = F(u) = |u|^{2} u$  \quad solved by Zhao \cite{Z2} (mass supercritical and energy critical)
\item $(i\partial_t+ \Delta_{\mathbb{R}^{2} \times \mathbb{T}}) u = F(u) = |u|^{2} u$  \quad solved by Cheng-Guo-Yang-Zhao \cite{CHENG} (mass critical and energy subcritical)
\end{enumerate}

\noindent When $n=2$, there are 2 cases
\begin{enumerate}
\item $(i\partial_t+ \Delta_{\mathbb{R}^{2} \times \mathbb{T}^{2}}) u = F(u) = |u|^{2} u$ \quad solved by Zhao \cite{Z1} (mass critical and energy critical) (assuming scattering for the 2D cubic resonant system)
\item $(i\partial_t+ \Delta_{\mathbb{R} \times \mathbb{T}^{2}}) u = F(u) = |u|^{4} u$ \quad  solved by Hani-Pausader \cite{HP} (mass critical and energy critical)
\end{enumerate}

The scattering result for this 2D cubic resonant system was the last missing brick, which is proved in this current paper. This category of problems are now all solved.\vspace{3mm}

At last, we conclude this paper with a few more remarks.\vspace{3mm}

\begin{remark}
The reason that most of the results on critical NLS on waveguides concerns low dimensional space and integer nonlinear exponent is the technical restriction of the function spaces ($U^p$, $V^p$ spaces). It is surely interesting to consider the high dimensional case and the fractional nonlinearity case (they often coincide together).  As concluded in \cite{IPRT3}, in general, the difficulty of the critical NLS problem on $\mathbb{R}^m \times \mathbb{T}^n$ increases if the dimension $m+n$ is increased or if the number $m$ of copies of $\mathbb{R}$ is decreased.
\end{remark}
\begin{remark}
Large data scattering for the focusing NLS on waveguides are comparably less understood than the defocusing case. Threshold assumptions are necessary and new ingredients are needed to handle this type of problems. See \cite{YYZ} for a global well-posedness result and \cite{Foc2,Foc1} for two very recent scattering result. Moreover, see \cite{D5,KM,KV2} for the Euclidean result. It is interesting to consider the focusing analogue of the results in this current paper.
\end{remark}
\begin{remark}
One may consider other related problems such as `other dispersive equations on waveguides' and `NLS on other manifolds/product spaces'. See \cite{Hari,SYYZ,YYZ2} for examples (Klein-Gordon equations on waveguides, Fractional NLS on waveguides and Fourth order NLS on waveguides respectively).
\end{remark}
\subsection{Weaker estimates}
In this subsection, we discuss the discrete estimates for the resonant nonlinearity from another aspect. We  recall the cubic resonances $\vec{F}(u)=\{ \sum_{(j_1,j_2,j_3)\in R(j)} u_{j_{1}}\bar{u}_{j_{2}}u_{j_{3}} \}_{j}$. We expect to show: for some positive $\delta_1,\delta_2$, $0<\beta<1$,
\begin{equation}
    \|\vec{F}(u)\|_{l^2} \lesssim \|\vec{u}\|^{\delta_1}_{l^2}\|\vec{u}\|^{3-\delta_1}_{h^1},
\end{equation}
\begin{equation}
           \|\vec{F}(u)\|_{l^2} \lesssim \|\vec{u}\|_{l^2}\|\vec{u}\|^{2}_{h^{\beta}}.
\end{equation}
and
\begin{equation}
           \|\vec{F}(u)\|_{h^1} \lesssim \|\vec{u}\|^{\delta_2}_{l^2}\|\vec{u}\|^{3-\delta_2}_{h^1}.
\end{equation}
The above estimates have their own interests. Some number theory will be involved. Rather than the scattering, one may consider other problems related to the cubic resonant system. We will discuss the proofs for the above estimates.

\begin{lemma}There holds that
\begin{equation}\label{eq-yle3.3}
\sup\limits_{j \in \mathbb{Z}^2}\{\langle j \rangle^2 \sum_{R(j),p_3 \textmd{ largest }}\langle p_1 \rangle^{-2\beta}\langle p_2 \rangle^{-2\beta}\langle p_3 \rangle^{-2}\} \lesssim 1.
\end{equation}
and
\begin{equation}\label{eq-yle3.3'}
\sup\limits_{j \in \mathbb{Z}^2}\{\sum_{R(j),p_3 \textmd{ largest }}\langle p_1 \rangle^{-2\beta}\langle p_2 \rangle^{-2\beta}\} \lesssim 1.
\end{equation}
\end{lemma}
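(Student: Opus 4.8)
The plan is to turn the constrained sum into a lattice-point count governed by the rectangular geometry of $R(j)$, to deduce \eqref{eq-yle3.3} from \eqref{eq-yle3.3'}, and to prove \eqref{eq-yle3.3'} by a dyadic decomposition organized around a ``Thales circle''.

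First I would record the algebraic content of the resonance relations: the mass identity rearranges as $|p_3|^2=|j|^2-|p_1|^2+|p_2|^2$, while expanding $|p_1-p_2+p_3|^2=|j|^2$ and subtracting it gives $(p_1-p_2)\cdot(p_3-p_2)=0$; equivalently $p_1,p_2,p_3,j$ are the vertices of a (possibly degenerate) rectangle with right angle at $p_2$, so that $p_1=j-b$, $p_2=j-a-b$, $p_3=j-a$ for orthogonal lattice vectors $a=p_1-p_2$ and $b=p_3-p_2$. On the set where $p_3$ is largest one has $|p_3|\geq|p_1|$, hence $2|p_1|^2\leq|j|^2+|p_2|^2$ and therefore
\begin{equation*}
|p_3|^2=|j|^2+|p_2|^2-|p_1|^2\geq\tfrac12\bigl(|j|^2+|p_2|^2\bigr)\geq\tfrac12|j|^2 ,
\end{equation*}
so $\langle j\rangle^2\langle p_3\rangle^{-2}\lesssim1$ there and \eqref{eq-yle3.3} follows immediately from \eqref{eq-yle3.3'}.

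For \eqref{eq-yle3.3'} I would first observe that $|p_3|\geq|p_2|$ forces $|p_1|\leq|j|$, so $p_1$ runs over the disk of radius $|j|$; the degenerate parts are harmless (the family $p_1=p_2$ contributes $\sum_{|p_1|\leq|j|}\langle p_1\rangle^{-4\beta}\lesssim1$ once $\beta>\tfrac12$, while $p_2=p_3$ is excluded by the tie-breaking in ``$p_3$ largest'' and is in any case the trivial mass term of Lemma~\ref{es:resonant}). For a fixed non-degenerate $p_1$, orthogonality places $p_2$ on the one-dimensional sublattice $\{p_1+kw_0:k\in\mathbb Z\}$ with $w_0\perp(j-p_1)$ and $|w_0|\geq1$, and decomposing $p_1$ into its components parallel and perpendicular to $j-p_1$ gives $|p_2|^2=|p_1^{\parallel}|^2+\ell(k)^2$ with $\ell(k)$ running over an arithmetic progression of spacing $|w_0|\geq1$. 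Comparing with an integral then yields, for $\beta>\tfrac12$,
\begin{equation*}
\sum_{p_2}\langle p_2\rangle^{-2\beta}\lesssim\bigl(1+|p_1^{\parallel}|^2\bigr)^{\frac12-\beta},\qquad
|p_1^{\parallel}|=\frac{\bigl||p_1|^2-p_1\cdot j\bigr|}{|j-p_1|},
\end{equation*}
and since $|p_1|^2-p_1\cdot j=\bigl|p_1-\tfrac{j}{2}\bigr|^2-\tfrac14|j|^2$, this factor measures how far $p_1$ sits from the circle $\mathcal C=\{x:|x|^2=x\cdot j\}$, the Thales circle on the diameter $\overline{0j}$.

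It then remains to bound $\sum_{|p_1|\leq|j|}\langle p_1\rangle^{-2\beta}\bigl(1+|p_1^{\parallel}|^2\bigr)^{\frac12-\beta}$ uniformly in $j$, and I expect this to be the main obstacle. I would decompose dyadically both in $|p_1|\sim N$ and in the distance $D$ of $p_1$ to $\mathcal C$. For $D\lesssim1$ there are $O(N)$ lattice points with $|p_1|\sim N$, and the weight alone gives $\sum_N N^{1-2\beta}\lesssim1$ for $\beta>\tfrac12$; for $D\gtrsim1$ the slab around $\mathcal C$ meets the annulus $|p_1|\sim N$ in $O(ND)$ lattice points while the circle factor is $O(D^{1-2\beta})$, so that $\sum_N N^{1-2\beta}\sum_{1\leq D\lesssim N}D^{2-2\beta}\sim\sum_N N^{3-4\beta}\lesssim1$ precisely when $\beta>\tfrac34$; the transitional regime $|p_1|\sim|j|$, where $\mathcal C$ genuinely curves and $|j-p_1|$ can be small near $p_1\approx j$, is absorbed by the same count together with the extra decay of the circle factor, contributing $\lesssim|j|^{1-2\beta}\to0$. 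Adding the bands yields \eqref{eq-yle3.3'}, hence \eqref{eq-yle3.3}, for $\tfrac34<\beta<1$ --- exactly the range the weaker estimates of Section~3 require. The delicate points are this uniform-in-$j$ count of lattice points in thin neighborhoods of $\mathcal C$ (where one uses that $\mathcal C$ has radius $\lesssim|j|$, passes through the lattice points $0$ and $j$, and is therefore locally almost linear) and keeping the degenerate families honestly separated, so that nothing of infinite size slips into the ``$p_3$ largest'' sum.
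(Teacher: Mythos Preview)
Your argument is sound and reaches the same threshold $\beta>\tfrac34$, but it is organized quite differently from the paper's. The paper does not reduce \eqref{eq-yle3.3} to \eqref{eq-yle3.3'}; it attacks \eqref{eq-yle3.3} directly by fixing $p_2$ first and observing that the resonance constraint $(p_1-p_2)\cdot(p_1-j)=0$ places $p_1$ on the \emph{circle} with diameter $\overline{p_2\,j}$. It then invokes a separate lemma---for any circle $C(P,R)$ and any $A>1$, $\sum_{|p|\geq A,\;p\in\mathbb Z^2\cap C(P,R)}\langle p\rangle^{-2\beta}\lesssim A^{1-2\beta}$---to bound the inner sum over $p_1$ by $\langle p_2\rangle^{1-2\beta}$ (in the case $|p_2|\leq|p_1|$; the other case is symmetric). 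The outer sum is then simply $\sum_{p_2}\langle p_2\rangle^{1-4\beta}<\infty$ for $\beta>\tfrac34$. By contrast, you fix $p_1$ first, which puts $p_2$ on a \emph{line} (a one-dimensional sublattice), so the inner sum is elementary; the price is that the resulting weight on $p_1$ records distance to a single fixed Thales circle, and the outer sum requires a lattice-point count in thin annular neighborhoods of that circle. Both routes work: the paper's is shorter and modular (all the geometry is hidden in the circle lemma), while yours avoids the lattice-points-on-circles black box at the cost of a more hands-on area count in the $p_1$ variable. One small remark: your exclusion of the degenerate family $p_1=j$, $p_2=p_3$ via ``tie-breaking'' is not actually needed---under ``$p_3$ largest'' that family forces $|p_2|\geq|j|$ and contributes $\langle j\rangle^{-2\beta}\sum_{|p_2|\geq|j|}\langle p_2\rangle^{-2\beta}\lesssim|j|^{2-4\beta}\lesssim1$.
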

\noindent \emph{Proof:} We just prove \eqref{eq-yle3.3} below because the other is similar, without loss of generality, we may assume that
\[|p_1|\leq |p_3|, \quad max(|j|,|p_2|)\sim |p_3|.
\]
\noindent Also we can see that $p_1$ is on a specific circle $\mathcal{C}$,
\[|p_1-\frac{p_2-j}{2}|^2=(\frac{p_2-j}{2})^2.
\]
\begin{align*}
S_1 &=\sum_{(p_1,p_2,p_3)\in R(j);|p_1|\leq|p_3|;|p_2|\leq|p_1|} \langle p_1 \rangle^{-2\beta} \langle p_2 \rangle^{-2\beta} \frac{\langle j \rangle^{2}}{\langle p_3 \rangle^{2}}\\
&\lesssim \sum_{(p_1,p_2,j+p_2-p_1)\in R(j);|p_2|\leq|p_1|} \langle p_1 \rangle^{-2\beta} \langle p_2 \rangle^{-2\beta} [\frac{\langle j \rangle}{\langle max(|j|,|p_2|) \rangle}]^2\\
&\lesssim \sum_{p_2} \langle p_2 \rangle^{-2\beta} \sum_{p_1} \langle p_1 \rangle^{-2\beta}\\
&\lesssim \sum_{p_2} \langle p_2 \rangle^{-2\beta} \langle |p_2| \rangle^{1-2\beta} \lesssim 1.
\end{align*}

\noindent The sum when $|p_1|\leq|p_2|$ is bounded similarly, using the following lemma to bound the sum over $p_2$ instead of the bound over $p_1$.

\begin{lemma}For any $P\in \mathbb{R}^2$, $R>0$ and $A>1$ there hold that:
\[\sum_{|p|\geq A,p\in \mathbb{Z}^2\cap C(P,R)}\frac{1}{\langle p \rangle^{2\beta}} \lesssim A^{1-2\beta}
\]
where $C(P,R)$ denotes the circle of radius $R$ centered at $P$.
\end{lemma}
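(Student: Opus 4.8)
The plan is to cut the circle $C(P,R)$ into dyadic pieces according to the size of $|p|$ and to bound the number of lattice points on each piece by a crude arclength count. For $p\in\mathbb{Z}^{2}$ with $|p|\ge A>1$ one has $\langle p\rangle\sim|p|$, so it suffices to show
$$
\sum_{k\,:\,2^{k}\gtrsim A}2^{-2\beta k}\,\#\!\left(\mathbb{Z}^{2}\cap C(P,R)\cap\{|p|<2^{k+1}\}\right)\;\lesssim\;A^{1-2\beta},
$$
and the heart of the matter is a bound on the number of lattice points of $C(P,R)$ lying in the ball $B(0,2^{k+1})$.

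Two distinct circles intersect in at most two points, so $C(P,R)\cap\{|p|=2^{k+1}\}$ has at most two points and therefore $C(P,R)\cap B(0,2^{k+1})$ is a union of at most two circular arcs, each contained in a set of diameter $\le 2^{k+2}$. One checks that each such arc has length $\lesssim 2^{k}$: if $R\le 2^{k+1}$ the whole circle already has length $2\pi R\lesssim 2^{k}$, while if $R>2^{k+1}$ any chord of the arc has length $\le 2^{k+2}$, which forces the subtended central angle to be $\lesssim 2^{k}/R$ and hence the arclength to be $\lesssim 2^{k}$. Finally, listing the lattice points $q_{1},\dots,q_{m}$ on an arc of length $\ell$ in the order in which they appear along the arc, the chords $|q_{i+1}-q_{i}|$ sum to at most $\ell$ (a chord is never longer than the subtended sub-arc) while each chord is $\ge 1$ (distinct points of $\mathbb{Z}^{2}$); hence $m\le\ell+1$. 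Consequently $\#\!\left(\mathbb{Z}^{2}\cap C(P,R)\cap\{|p|<2^{k+1}\}\right)\lesssim 2^{k}+1\lesssim 2^{k}$.

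Substituting this count gives
$$
\sum_{|p|\ge A,\ p\in\mathbb{Z}^{2}\cap C(P,R)}\frac{1}{\langle p\rangle^{2\beta}}\;\lesssim\;\sum_{k\,:\,2^{k}\gtrsim A}2^{-2\beta k}\cdot 2^{k}\;=\;\sum_{k\,:\,2^{k}\gtrsim A}2^{(1-2\beta)k},
$$
and since we are in the regime $1-2\beta<0$ this is a convergent geometric series whose sum is controlled by its first term, of size $\lesssim 2^{(1-2\beta)\log_{2}A}=A^{1-2\beta}$, which is the assertion.

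The only genuinely nontrivial ingredient is the geometric count of the second paragraph, and within it the crude bound ``an arc of length $\ell$ carries $\lesssim\ell+1$ lattice points'' is exactly what pins the power on the right-hand side to $A^{1-2\beta}$ and makes the hypothesis $2\beta>1$ the natural one; if one wished to push $\beta$ below $1/2$ one would instead have to invoke the sharper estimate of Jarn\'ik (a strictly convex arc of length $\ell$ lying on a circle of radius $R$ contains $O(\ell R^{-1/3}+1)$ lattice points), but for the applications to \eqref{eq-yle3.3}--\eqref{eq-yle3.3'} the crude version is all that is needed. It also remains to dispose of the harmless edge cases, namely $A>\max_{p\in C(P,R)}|p|$ (the sum is then empty) and $R\lesssim 1$ (the whole circle then carries $O(1)$ lattice points), both of which are trivially consistent with the claimed bound.
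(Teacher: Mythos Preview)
Your proof is correct. The paper states this lemma without proof (it appears as an auxiliary fact in the Appendix, presumably regarded as standard or borrowed from \cite{HP,Z1}), so there is no argument in the paper to compare against; your dyadic--arclength count is exactly the natural elementary justification. Two minor remarks: (i) the intersection $C(P,R)\cap B(0,2^{k+1})$ is in fact a single arc (or empty, or the whole circle) rather than ``at most two'', but your overcount is harmless; (ii) you correctly flag that the argument uses $2\beta>1$, which the paper does not state explicitly in the lemma but which is consistent with the ambient hypothesis $\beta>\tfrac{3}{4}$ used in the surrounding Appendix computations.
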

We note that we need to assume $4\beta-1>2$ because of the integrability, which means $\beta>\frac{3}{4}$. Then, according to the above lemma, we have,
\begin{equation}
           \|\vec{F}(u)\|_{h^1} \lesssim \|\vec{u}\|^{2}_{h^{\beta}}\|\vec{u}\|_{h^1}.
\end{equation}
By interpolation,
\begin{equation}
           \|\vec{F}(u)\|_{h^1} \lesssim \|\vec{u}\|^{\delta_2}_{l^2}\|\vec{u}\|^{3-\delta_2}_{h^1},
\end{equation}
where
\begin{equation}
    0<\delta_2=2-2\beta<\frac{1}{2}.
\end{equation}
Similarly, we can obtain,
\begin{equation}
           \|\vec{F}(u)\|_{l^2} \lesssim \|\vec{u}\|^{\delta_1}_{l^2}\|\vec{u}\|^{3-\delta_1}_{h^1},
\end{equation}
where
\begin{equation}
    0<\delta_1=3-2\beta<\frac{3}{2}.
\end{equation}
Moreover, we can obtain
\begin{equation}\label{eq-yle3.11}
           \|\vec{F}(u)\|_{l^2} \lesssim \|\vec{u}\|_{l^2}\|\vec{u}\|^{2}_{h^{\beta}}.\vspace{5mm}
\end{equation}

\noindent \textbf{Acknowledgments.}
We highly appreciate Prof. Chenjie Fan and Prof. Lifeng Zhao for helpful discussions and beneficial suggestions on this project. In particular, we are very grateful to Chenjie for discussing `the failure of $l^2$ estimate' for the cubic resonances in Section 3.

K. Yang was supported by a Doctoral Foundation of Chongqing Normal University (21XLB025) and a funding (6142A0521Q06, HX02021-36) from Laboratory  of  Computational  Physics, Institute of Applied Physics and Computational Mathematics in Beijing. Z. Zhao was supported by the NSF grant of China (No. 12101046) and the Beijing Institute of Technology Research Fund Program for Young Scholars.
\vspace{5mm}

%

%
\end{document}